\newtheorem{thm}{Theorem}[section]
\newtheorem{cor}[thm]{Corollary}
\newtheorem{lem}[thm]{Lemma}
\newtheorem{prop}[thm]{Proposition}
\theoremstyle{definition}
\newtheorem{defn}[thm]{Definition}
\newtheorem{rk}[thm]{Remark}
\newtheorem{ex}[thm]{Example}
\DeclareMathOperator{\Ker}{Ker}           
\DeclareMathOperator{\Op}{Op}      
\DeclareMathOperator{\Res}{Res}         
\DeclareMathOperator{\res}{res}         
\DeclareMathOperator{\spec}{sp}           
\DeclareMathOperator{\TR}{TR}                 
\newcommand{\A}{\mathcal{A}}              
\renewcommand{\a}{\alpha}                    
\newcommand{\C}{\mathbb{C}}              
\newcommand{\N}{\mathbb{N}}
\newcommand{\Z}{\mathbb{Z}}
\newcommand{\R}{\mathbb{R}}
\newcommand{\T}{\mathbb{T}}
\newcommand{\dbar}{d{\hskip-1pt\bar{}}\hskip1pt}
\newcommand{\cutoffint}{-\hskip -10pt\int}
\renewcommand{\t}{\mathbf{t}}
\newcommand {\e} {{\epsilon}}
\newcommand {\Ci} {{C^\infty}}
\renewcommand{\th}{\theta} 
\newcommand{\cutoffsum}{-\hskip -4mm\sum}
\newcommand{\W}{\underline{\mathcal W}}
\newcommand{\ellt}{\underline{\mathcal \ell^2\Gamma}}
\begin{document}

\title{\bf Traces of holomorphic families of operators on the noncommutative torus and on Hilbert modules}

\author{Sara Azzali, Cyril L\'evy, Carolina Neira, Sylvie Paycha}

%
%
%
%

\maketitle

\begin{abstract}
We  revisit traces of holomorphic families of pseudodifferential operators on a closed manifold in view of geometric applications. We  then transpose the corresponding analytic constructions to two different geometric frameworks;   the   noncommutative torus   and   Hilbert modules.  These traces are meromorphic functions whose residues at the poles as  well as  the constant term of the Laurent expansion at zero (the latter when the family at zero is a differential operator) can be expressed in terms of   Wodzicki residues and extended Wodzicki residues  involving logarithmic operators. They are therefore local and contain geometric information. For holomorphic families leading to  zeta regularised traces, they relate to the heat-kernel asymptotic coefficients via an inverse Mellin mapping theorem. We     revisit   Atiyah's $L^2$-index theorem  by means of the (extended) Wodzicki residue and   interpret the scalar curvature on the noncommutative two torus as an (extended) Wodzicki  residue.
\end{abstract}


 \section*{Introduction}
 
The canonical trace ${\rm TR}(A(z))$  of a holomorphic family  ${\mathcal A}\colon z\mapsto A(z)$ of classical pseudodifferential operators of affine order $-qz+a$, $q>0$ acting on smooth sections of a vector bundle over an $n$-dimensional closed manifold ($n>1$), defines a meromorphic function $z\mapsto {\rm TR}(A(z))$ with a  discrete set of simple poles $\{d_j=\frac{a+n-j}{q}, j\in \Z_{\geq 0}\}$. The residue at a pole $d_j$ is proportional to the Wodzicki residue of the operator $A(d_j)$.  These are well-known results due to Wodzicki whose residue \cite{Wo} is the only\footnote{when the dimension of the manifold is greater than one.} (up to a multiplicative factor) trace on classical pseudodifferential operators of integer order,  Guillemin \cite{Gu} who introduced the notion of   gauged symbols    and Kontsevich and Vishik \cite{KV} whose canonical trace TR corresponds to the unique   linear extension to operators of non integer order of the ordinary trace defined on operators of order $<-n$.
\\
When $A(0)$ is a differential operator, there is no pole at zero  since the residue vanishes on differential operators.  It was later observed  in \cite{PaSc} that  the limit $\lim_{z\to 0} {\rm TR}(A(z))$ is also proportional to a Wodzicki residue, namely to the  extended Wodzicki residue ${\rm Res}\left(A^\prime(0)\right)$, {\it extended} since the operator  $A^\prime(0)$ given by the derivative at zero  is typically not a classical operator any longer.  So the regularised trace $\lim_{z\to 0} {\rm TR}(A(z))$  is then local as a consequence of the locality of the Wodzicki residue.   
\\  If the meromorphic map  $\phi_{\mathcal A }\colon  z\longmapsto \Gamma(z)\,{\rm TR}\left(A (z)\right)$ corresponding to the holomorphic family ${\mathcal A}\colon z\longmapsto A(z)$ is the inverse Mellin transform of some function $f_{\mathcal A}:\R_+\longrightarrow  \R$, it follows from the inverse Mellin mapping theorem (see Proposition \ref{prop:InverseMellin}) that  $f_{\mathcal A}$ admits an asymptotic expansion    at $0$ given by
  $$f_{\mathcal A}(t)= \frac{1}{q}\sum\limits_{j\geq 0} a_j\left({\mathcal A}\right) t^{-d_j} +O(t^{-\gamma}),$$ for some appropriate $\gamma$. Its singular coefficients are Wodzicki residues (Theorem \ref{thm:mainthm})
  \begin{equation}\label{eq:aj-intro}
  a_j\left({\mathcal A}\right)= -\frac{1}{q} {\rm Res}\left( A(d_j)\right) \quad\text{for}\quad d_j>0,
  \end{equation} 
  and  its constant term \begin{equation}\label{eq:ajbis-intro}
  a_j\left({\mathcal A}\right)  =-\frac{1}{q} {\rm Res}\left(A^\prime(0)\right)\quad \text{for}\quad d_j=0, 
  \end{equation}  is an extended Wodzicki residue. 
\\ For holomorphic families ${\mathcal A}\colon z\longmapsto A(z)= A\, Q^{-z}$ built from complex powers $Q^{-z}$  of some appropriate invertible elliptic operator $Q$ and a classical pseudodifferential operator $A$, the canonical trace ${\rm TR}(A(z))$ is called the $\zeta$-regularised trace $\zeta(A,Q)(z)$ of $A$ with respect to the weight $Q$. It follows from the above discussion, that if $A(0)=A$ is a differential operator, then $\zeta(A,Q)(0)$ is a local quantity proportional to the extended Wodzicki residue $${\rm Res}\left(A\log Q\right)=-\frac{1}{q}\int_M {\rm res}_x( A\,\log Q)\, dx ,$$ where ${\rm res}_x( A\,\log Q)\, dx$ is the pointwise extended residue density involving the logarithm of $Q$. When $A=I$ is the identity operator, this is the logarithmic residue  ${\rm Res}\left( \log Q\right)$  investigated in \cite{O2} and \cite{Sc1}. \\
When   $Q=\Delta+\pi_\Delta$  with $\Delta$  an elliptic   
	differential operator to which we add the orthogonal projection $\pi_\Delta$ onto its kernel, making $Q$   invertible, then $A(z)= A\, Q^{-z}$ is the Mellin transform of the analytic family $\tilde A(t)= A\, e^{-t\left(\Delta+\pi_\Delta\right)}$. If $\Delta$ is nonnegative,  the inverse Mellin transform of the meromorphic map $  z\mapsto \zeta(A, Q)(z)$ is the Schwartz function
	 $$ f_{\mathcal A}\colon t\longmapsto {\rm Tr}\left(A\, e^{-t \left(\Delta+\pi_\Delta\right)}\right)=\int_M A\, K_t\left(\Delta+\pi_\Delta\right)(x,x)\,\sqrt{{\rm det}(g)}(x)\, dx
	 $$ 
	 on $]0,+\infty[$ where     $K_t\left(A\right)(x,x)$ denotes the fibrewise trace at the point $x$ of the  kernel of $e^{-t A}$ restricted to the diagonal and ${\rm det}(g)$ the determinant of a Riemannian metric $g$ on the manifold $M$.  
	\\  When applied to any multiplication operator $A=\phi$  given by a smooth function $\phi$ on $M$, this shows that both the constant {\it and}  the singular coefficients in the time zero asymptotic expansion of the kernel $K_t(\Delta)(x,x)$  \footnote{It follows from Duhamel's formula \cite{BGV} that the time zero asymptotic expansions of $K_t\left(\Delta+R\right)(x,x)$ and $K_t(\Delta)(x,x)$  coincide for every smoothing operator $R$ and hence in particular for $R=\pi_\Delta$, a fact that will be implicitly used throughout the paper. }can be expressed as  Wodzicki residues\footnote{Similarly, since the Wodzicki residue vanishes on smoothing operators, a perturbation of the operator by a smoothing operator so in particular by the projection onto the kernel does not modify the residue  (see Corollary \ref{cor:reslogxlocal}).}  (see Theorem \ref{thm:an}):
	\begin{eqnarray}\label{eq:HKintro} 
	&& K_t\left(\Delta \right)(x,x) \sim_{t\to 0}\\
	&&-\frac{(4\pi )^{\frac{n}{2}}}{2\, \sqrt{{\rm det} g}(x) }\,\Bigg[{\rm res}_x\left(\log \Delta \right) \, \delta_{\frac{n}{2}-\left[\frac{n}{2}\right]}\nonumber\\
	&&+\sum_{k\in\left[0,{\frac{n}{2}}\right[\cap \Z} \Gamma\left( \frac{n}{2}-k\right)  \,{\rm res}_x \left(   \Delta^{k-\frac{n}{2}}\right)\, t^{k-\frac{n}{2}}\Bigg]\nonumber.
	\end{eqnarray} 
	When integrated against  $\phi\in \Ci(M)$, the heat-kernel expansion (\ref{eq:HKintro}) yields the following heat-operator trace expansion at zero
	\begin{eqnarray}\label{eq:HKintrophi} 
	&&  {\rm Tr}\left(\phi \, e^{-t \Delta  }\right)\sim_{t\to 0}\\
	&&-\frac{(4\pi )^{\frac{n}{2}}}{2  }\,\Bigg[{\rm Res} \left(\phi\,\log \Delta \right) \, \delta_{\frac{n}{2}-\left[\frac{n}{2}\right]}\nonumber\\
	&+& \sum_{k\in\left[0,{\frac{n}{2}}\right[\cap \Z} \Gamma\left( \frac{n}{2}-k\right)  \,{\rm Res} \left( \phi\,   \Delta^{k-\frac{n}{2}}\right)\, t^{k-\frac{n}{2}}\Bigg]\nonumber.
	\end{eqnarray}
	That the singular coefficients of the heat-kernel (resp. heat-operator trace) expansion are proportional to the Wodzicki residue is a well-known (see e.g. \cite{Ac, KW}) fact often held for folklore knowledge. It  has been extended to noncommutative geometry \cite[Formula 1.5]{CC}) for the asymptotic expansion of the spectral action (take $f(\lambda)= e^{-t\lambda}$) whose non-constant coefficients arise as Dixmier traces.  What is lesser known, is  that
	\begin{itemize}
		\item  not only  the singular coefficients in the heat-kernel expansion (resp. heat-operator trace) but also the {\it constant  coefficient }   are  (possibly extended) Wodzicki residues (and hence   local),  a property that can   be easily transposed to other geometric frameworks in which  the canonical trace of holomorphic families of pseudodifferential operators can be built.
		\item    {\it no previous knowledge on the heat-operator trace asymptotics} is needed to express its coefficients in terms of Wodzicki residues, which is a purely analytic procedure.
	\end{itemize} 
	Since the Wodzicki residue of a logarithm is an algebraic expression involving the jets of the first $n$ homogeneous components of the symbol, we further recover other known facts (see \cite[Lemma 1.8.2]{Gi}), that the coefficients of the heat-operator trace expansion in  (\ref{eq:HKintrophi})  are
	\begin{itemize} 
		\item functorial algebraic expressions   in the jets of the homogeneous components of the  symbol of the operator $\Delta$ as a consequence of the corresponding property of the Wodzicki residue.
		\item Consequently,  if the operator $\Delta$ is of geometric nature, the coefficients are functorial algebraic expressions of   the jets of  the underlying metric and connection. 
	\end{itemize}     
Thanks to the functoriality of the construction, we can transpose our approach via inverse Mellin transforms to two different geometric contexts, applying it to
\begin{enumerate}
\item {\bf Holomorphic families of pseudodifferential operators on Hilbert modules:} Let $\Delta$ be an  (essentially) nonnegative selfadjoint  differential operator acting on a   vector bundle $E \to M$. One builds the operator $ \Delta_\mathcal H$ by twisting $\Delta$ by a flat connection on a bundle $\mathcal H$ of Hilbert modules over a finite von Neumann algebra. One can  then implement the same constructions as above to $\Delta_{\mathcal H}+R$ with $R$ a smoothing operator which makes the operator invertible (see Remark \ref{rk:inv}).  The  residue ${\rm Res}$ is then replaced by the  $\tau$-residue ${\rm Res}^\tau $ \eqref{eq:resAtau}, the $L^2$-trace (resp. canonical trace) ${\rm Tr}$ (resp. ${\rm TR}$) by ${\rm Tr}^\tau$ (resp. ${\rm TR}^\tau$), where $\tau$ is a finite trace on the von Neumann algebra. 

The corresponding heat-kernel  $\tau$-trace $K_t^\tau( \Delta_{\mathcal H} ) (x,x)$ at a point $x$ reads (see \eqref{eq:HKL2})
 \begin{eqnarray} \label{eq:HKintrolift} 
  && K_t^\tau  \left(\Delta_{\mathcal H}  \right)(   x,  x)  \\
 &\sim_{t\to 0} &-\frac{(4\pi )^{\frac{n}{2}}}{2 \sqrt{{\rm det}  g}(x) }
  \,\Bigg[ {\rm res}_x^\tau\left(\log \Delta_{\mathcal H} \right) \, \delta_{\frac{n}{2}-\left[\frac{n}{2}\right]} \nonumber \\
 	&+&
 	 \sum_{k\in\left[0, \frac{n}{2} \right[\cap \Z} \Gamma\left( \frac{n}{2}-k\right)  \,\ {\rm res}_x^\tau \left(   \Delta_{\mathcal H}^{k-\frac{n}{2}}\right) \, t^{k-\frac{n}{2}}\Bigg]\nonumber.
	  \end{eqnarray}
	 \begin{eqnarray*}
  &\sim_{t\to 0} & -\frac{(4\pi )^{\frac{n}{2}}}{2 \sqrt{{\rm det}  g}(x) }
  \,\Bigg[ {\rm res}_x \left(\log  \Delta \right) \, \delta_{\frac{n}{2}-\left[\frac{n}{2}\right]} \nonumber \\
 	&+&
 	 \sum_{k\in\left[0, \frac{n}{2} \right[\cap \Z} \Gamma\left( \frac{n}{2}-k\right)  \,\ {\rm res}_x  \left(  \Delta^{k-\frac{n}{2}}\right) \, t^{k-\frac{n}{2}}\Bigg]\nonumber.
 	 \end{eqnarray*}  
	 
	 The above formula \eqref{eq:HKintrolift} follows from the following more general property \eqref{eq:reslogEQ} relating the (extended) residue of  locally equivalent differential operators $A$, $A'$, respectively $B$, $B'$ (with some admissibility condition on the latter),
	 	\begin{eqnarray*}
&{\rm Res}^\tau\left(A \log B\right)={\rm Res}^\tau(A'  \log B')\\
&	{\rm Res}^\tau\left(A  B^\alpha \right)={\rm Res}^\tau\left(A'  B'^\alpha \right)
	,\;\;  \a\in \R,
\end{eqnarray*}
	where for simplicity we have dropped the projections onto the kernels since they are smoothing operators which are not ``seen" by the residue.
	   Atiyah's $L^2$-index theorem then boils down to an easy consequence, see Corollary \ref{cor:Aty} of the first of these two identities, using a $\mathbb Z_2$ graded version of the (extended) residue. This alternative proof, which is equivalent to Roe's heat equation proof \cite[Ch.15]{Ro}, is of pseudodifferential analytic nature and relies on the locality of the Wodzicki residue.

	\item   {\bf Holomorphic families of pseudodifferential operators on the noncommutative torus:}
	We consider  a  conformal perturbation $\mathbf{\Delta}_h$ (parameterised by a conformal factor $h$) of    a Laplace-type  operator  $\mathbf{\Delta}$  acting on the noncommutative $n$-torus $\T_\theta^n$,  where $\theta$ is an antisymmetric real matrix encoding the  noncommutativity. The  residue ${\rm Res}$ is then replaced by the  $\theta$-residue ${\rm Res}_\theta $ (see \cite{FW}), the $L^2$-trace (resp. canonical trace) ${\rm Tr}$ (resp. ${\rm TR}$) by ${\rm Tr}_\theta$ (resp. ${\rm TR}_\theta$, see \cite{LN-JP}). The coefficients of the Laurent expansions of traces of holomorphic families  can be  expressed in terms of Wodzicki residues (Theorem \ref{thm:NCKVPSopNCT})	   and  the heat-kernel expansion formula (\ref{eq:HKintrophi}) reads for any $a\in{\mathcal A}_\theta$, the Fr\'echet algebra of ``Schwartz functions" on the noncommutative torus $\T^n_\theta$ (see (\ref{eq:NCHKresphi}))
	\begin{eqnarray}\label{eq:HKintrophitheta} 
	&&  {\rm Tr}_\theta\left(a\, e^{-t \mathbf{\Delta}_h }\right)\sim_{t\to 0}\\
	&&-\frac{(4\pi )^{\frac{n}{2}}}{2  }\,\Bigg[{\rm Res}_\theta \left(a\,\log \mathbf{\Delta}_h \right) \, \delta_{\frac{n}{2}-\left[\frac{n}{2}\right]}\nonumber\\
	&+ &\sum_{k\in\left[0,{\frac{n}{2}}\right[\cap \Z} \Gamma\left( \frac{n}{2}-k\right)  \,{\rm Res}_\theta \left(a\,   \mathbf{\Delta}_h^{k-\frac{n}{2}}\right)\, t^{k-\frac{n}{2}}\Bigg]\nonumber.
	\end{eqnarray}
Going back to the setup of closed manifolds,  for certain geometric operators $\Delta$, the coefficients  of the heat-kernel expansion correspond to interesting geometric quantities; e.g. when $\Delta$ is the Laplace-Beltrami operator on a closed Riemannian manifold $(M,g)$, the coefficient of $t$ is proportional to the scalar curvature. The fact that the coefficient of $t$ in (\ref{eq:HKintrophitheta}) provides an analogue in the noncommutative setup of the scalar curvature on a noncommutative torus was exploited in \cite{CM1}, \cite{CM2}, \cite{CT}, \cite{FK1}, \cite{FK2}, \cite{FK3} to compute a noncommutative analogue of the scalar curvature on noncommutative tori.  By means of the Wodzicki residue on the noncommutative torus \cite{FW}, we use (\ref{eq:HKintrophitheta}) to define   the scalar curvature $\mathfrak s_h$ as a  Wodzicki residue; for any $a\in{\mathcal A}_\theta$ we set (compare with (\ref{eq:NCscal})
	\begin{equation}
	\langle \mathfrak s_h, a\rangle_h 
	= \begin{cases}
	-6\pi\,{\rm Res}_\theta \left(a\,\log   \mathbf{\Delta}_h \right)\quad  &\text{if}\quad n=2\noindent\\
	- \frac{ 3}{2}\, (4\pi )^{\frac{n}{2}} \, \Gamma\left( \frac{n}{2}-k\right)\, 
	{\rm Res}_\theta \left(a\,  \mathbf{\Delta}_h^{k-\frac{n}{2}} \right)\quad &  \text{otherwise},\nonumber
	\end{cases} 
	\end{equation}
	with $\langle\cdot,\cdot\rangle_h$ an adequate inner product on ${\mathcal A}_\theta$.
\end{enumerate}	
	 The extensions \eqref{eq:HKintrolift} and \eqref{eq:HKintrophitheta} are   possible thanks to the fact that  Cauchy calculus extends to Hilbert modules (see \cite{BFKM}) and to the noncommutative torus (see \cite{FW},\cite{LN-JP}). The analogies between the pseudodifferential calculi in these two frameworks leads to the question whether  the pseudodifferential calculus on $\mathcal N\mathbb Z^n$-Hilbert module encompasses the $\Z^n$-invariant pseudodifferential calculus on $\R^n$ studied in the more general context of global pseudodifferential calculus by Ruzhanski and Turunen in \cite{RT}. The latter corresponds to the $\theta=0$ case of the algebra $\Psi(\T^n_\theta)$  described in Section 4, which raises the further question, namely whether this issue can be carried out to the noncommutative setup.



\section[Traces of holomorphic families on closed manifolds]{ Traces of holomorphic families on closed manifolds and their geometry  }
\label{sec:1}

For the purpose of a later generalisation to the  Hilbert module setting, we recall the well-known basic setup to define the Wodzicki residue on closed manifolds and extend it to logarithmic pseudodifferential operators.

\subsection[The Wodzicki residue on pseudodifferential operators ]{The Wodzicki residue on  pseudodifferential operators }
\label{ssecres} 
Let $\left(M,g\right)$ be a closed Riemannian manifold of dimension $n$, let $p\colon E\to M$ be a vector bundle and let ${\rm End}(E)=E^*\otimes E $ be the corresponding endomorphism bundle.
\\ A  linear operator $A\colon  C^\infty\left(M, E\right)\to  C^\infty\left(M, E\right)$ is a (classical) pseudodifferential  operator of  order $a\in \C$, denoted $A\in \Psi^a(M, E)$, if in some atlas of $E  \to M$ it is of the form $A=\sum_{j=1}^J A_j+R$ where 
\begin{itemize}
	\item $R$ is a smoothing operator, namely a linear   operator $R\colon C^\infty\left(M, E\right)\to  C^\infty\left(M, E\right)$ with Schwartz kernel  given by a smooth  section of the bundle ${\mathcal L}(    M\times M, {\rm End}(E))$ of  linear bounded operators whose fibre at $(x,y)\in M\times M$ is the Banach space of bounded linear operators from the fibre  $E_y$ to the fibre $E_x$, 
	\item the operators $A_j$ are properly supported operators (meaning that the canonical projections $M\times M\to M$ restricted to the support of the Schwartz kernel are proper maps)  from $C^\infty \left(M, E\right)$ into  itself   
	such that in any coordinate chart in a neighborhood $U$ of a point $x\in M$ the operators $A_j$ are of   the form
	\begin{equation}    
	\label{eq: class.op}  u\longmapsto \frac{1}{(2\pi)^n}\int_{\R^n}d\xi \int_{U} dy\, e^{i\langle x-y,\xi\rangle}\, \sigma (x,\xi)\, u(y) 
	\end{equation}
	for some  symbol  $\sigma\in C^\infty \left(U
	\times \R^n, {\rm End}(E)\right)$  which is  asymptotically polyhomogeneous at infinity  \begin{equation}
	\label{eq:polyhom}\sigma(x,\xi)\sim\sum_{i=0}^\infty \omega(\xi)\, \sigma_{a-i}(x,\xi)
	\end{equation} i.e.,  the components 
	$\sigma_{a-i}\in C^\infty \left( T^*U\setminus  \left(U\times \{0\}\right) ,  {\rm End}(E)\right)$ being positively homogeneous of degree $a-i$ with $a$ the order of the operator. The asymptotic behaviour is to be understood as\footnote{The remainder depends on the choice of the cut-off function $\omega$ which is not explicitly mentioned here to alleviate the notation.}  
	\begin{equation} \label{eq:asymptsymb}\sigma^{(N)}(x,\xi):=\sigma(x,\xi)-\sum_{i=0}^{N-1}\omega(\xi)\, \sigma_{a-i}(x,\xi)\end{equation} is a symbol of order $\Re(a)-N$ for any $N\in \N$. Also, $\omega$ is  a smooth function  on $\R^n$ which is zero in a neighborhood of zero and identically one outside the unit ball.
	\\ 
	Let  us denote by ${\mathcal S}^a\left(U , {\rm End}(E)\right)$ the set of such symbols and by
	$${\mathcal S} \left(U ,{\rm End}(E)\right)=\langle \cup_{a\in \C}{\mathcal S}^a\left(U , {\rm End}(E)\right)\rangle $$ the algebra generated by all classical symbols of any complex order.\\
	Finally, let us denote by $\Theta(M,E)$ the subalgebra in $\Psi(M,E)$ of differential operators. 
\end{itemize} 
Let ${\rm tr}_x $ denote the fibrewise trace on ${\rm End}(E)$ above a point $x\in M$.  The {\bf pointwise Wodzicki  residue}   of   the operator $A$  with symbol $\sigma(A)$  at the point $x$ defined as 
\begin{equation}\label{eq:resx}\quad  {\rm res}_x (A) :=  \int_{|\xi|_x=1}{\rm tr}_x \left( \sigma(A)_{-n}(x,\xi)\right)\, \dbar_S\xi,
\end{equation}
vanishes on smoothing symbols so that it is independent of the choice of cut-off function $\omega$ chosen in (\ref{eq:polyhom}). Here     $d_S\xi$ is the measure on the unit cotangent sphere $S_x^*M=\{\xi \in T_x^*M, \, \vert \xi\vert_x:=g_x(\xi,\xi)=1\}$ induced by the one on the cotangent space $T_x^*M$ to $M$ at the point $x$ given by a Riemannian metric $g$ on $M$, $\dbar_S$ denotes the corresponding normalised measure $\dbar_S\xi=\frac{1}{(2\pi)^n}d_S\xi$. 
\begin{rk}
Clearly the residue  ${\rm res}_x $    vanishes on differential and  non integer order operators.
\end{rk} 
An important result of Wodzicki is   that $ {\rm res}_x  (A)\, dx$ defines a global density \cite{Wo}.
The {\bf Wodzicki residue} of the operator $A$ is then defined by 
\begin{equation}
\label{eq:resA}{\rm Res}(A)=\int_M {\rm res}_x(A)\, dx=   \int_M dx\int_{\vert \xi\vert_x=1} {\rm tr}_x\left(\sigma_{-n}(A)(x,\xi)\right)\,  \dbar_S\xi.
\end{equation} 

\subsection[Logarithms of pseudodifferential operators on closed manifolds]{Logarithms of pseudodifferential operators on closed manifolds}
\label{sseclog}

\begin{defn}\label{def:agmon} (see e.g. \cite{Se}, \cite{Sc2}) 
	Let $A$ be an  operator in $ \Psi(M, E)$.
	A real number $\beta $ is a principal angle of $A$ if there exists a ray $R_\beta= \{re^{i\beta}, \quad r\geq 0\}$ which is disjoint from the spectrum of the ${\rm End}(E_x)$-valued leading symbol $\sigma_L(A)(x,\xi)$ for any $x\in M$, $\xi \in T_x^*M \setminus \{0\}$. 
\end{defn}
\begin{defn}\label{def:admiss1}
	We call an operator $A\in \Psi(M, E)$   {\bf admissible} with {\bf spectral cut} $\beta$ if  
	\begin{itemize} 
		\item its order  is positive,
		\item   $\beta$ is a principal angle for $A$. 
	\end{itemize}
\end{defn}
\begin{rk}  
		An admissible operator is elliptic but not necessarily  invertible.
		Admissibility is a covariant condition, that is to say it is preserved under diffeomorphisms. 
	 \end{rk}
	\begin{defn} \label{defn:weight} We  call {\bf weight} an invertible admissible operator $Q\in \Psi(M, E)$.
	\end{defn} As we shall see later,  weights are used to regularise traces. Here is a useful lemma which strongly uses the theory of elliptic operators on closed manifolds.  
	\begin{lem}\label{lem:agmon angle}  If $A\in \Psi(M, E)$ is an admissible operator   with  spectral cut $\beta$ then there is an angle $\beta'$ arbitrarily close if not equal to $\beta$  and a truncated solid angle   $\Lambda_{\beta', \epsilon}:=\{z\in \C\setminus 0: \vert z\vert>\e,\, \arg z\in (\beta'-\epsilon,\beta'+\epsilon)\}$ for some  $\e>0$, outside of which lies the spectrum  of $A$.
		\\ If $A$ is moreover invertible i.e., if  it is a weight, then  its spectrum lies outside the solid angle $V_{\beta', \epsilon}:=\{z\in \C\setminus 0\, : \arg z\in (\beta'-\epsilon,\beta'+\epsilon)\}$ for some small $\e>0$. The angle $\beta'$ is then called an {\bf Agmon angle}. 
	\end{lem}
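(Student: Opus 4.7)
The plan is to combine the classical spectral theory of elliptic operators on closed manifolds with a Seeley-type parametrix construction with parameter. Since $A\in\Psi(M,E)$ is admissible, it is elliptic of positive order, so as an unbounded operator on $L^2(M,E)$ it has compact resolvent, and its spectrum is a discrete set of eigenvalues of finite multiplicity with infinity as the sole possible accumulation point.

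The first step is to upgrade the principal angle condition from the ray $R_\beta$ to an angular neighborhood of it. By Definition~\ref{def:agmon}, the ${\rm End}(E_x)$-valued leading symbol $\sigma_L(A)(x,\xi)$ has no eigenvalue on $R_\beta$ for any $(x,\xi)$ with $\xi\in T_x^*M\setminus\{0\}$. Restricting to the compact cosphere bundle $S^*M=\{|\xi|_x=1\}$ and using the joint continuity of the spectrum of a continuous family of finite-dimensional endomorphisms, one produces $\epsilon_0>0$ such that $\spec(\sigma_L(A)(x,\xi))\cap V_{\beta,\epsilon_0}=\emptyset$ for every $(x,\xi)\in S^*M$; by homogeneity of $\sigma_L$, the same holds on all of $T^*M$ off the zero section.

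The second step, which is the main technical ingredient, is Seeley's construction of a parametrix with parameter in the sector $V_{\beta,\epsilon_0}$. One builds a holomorphic family of symbols $b(\lambda;x,\xi)$ asymptotically inverting $\sigma(A)(x,\xi)-\lambda$, so that the associated operator $B(\lambda)$ satisfies $(A-\lambda)B(\lambda)=I+R(\lambda)$ with $\|R(\lambda)\|\to 0$ in any Sobolev norm as $|\lambda|\to\infty$ inside $V_{\beta,\epsilon_0}$. Hence $A-\lambda$ is invertible whenever $\arg\lambda\in(\beta-\epsilon_0,\beta+\epsilon_0)$ and $|\lambda|$ exceeds some radius $r_0$, so only finitely many eigenvalues of $A$ can have argument in that interval, and all of them lie in the closed disk of radius $r_0$.

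The final step is a purely geometric adjustment. The eigenvalues just found form a finite set in $\C$; each nonzero such eigenvalue has a well-defined argument, so by shrinking $\epsilon_0$ to some $\epsilon\in(0,\epsilon_0)$ and, if necessary, perturbing $\beta$ slightly to a nearby $\beta'$, one ensures that the open sector $V_{\beta',\epsilon}$ meets this finite set only possibly at $0$. The spectrum of $A$ is then disjoint from the truncated sector $\Lambda_{\beta',\epsilon}$. If $A$ is moreover a weight, then $0\notin\spec(A)$, and the same argument yields an (untruncated) sector $V_{\beta',\epsilon}$ avoided by $\spec(A)$, yielding an Agmon angle. The main obstacle is the parametrix-with-parameter estimate in the second step: it is classical but requires uniform control of the symbolic resolvent $(\sigma_L(A)(x,\xi)-\lambda)^{-1}$ as $\lambda$ ranges over an unbounded sector, which is precisely where the principal-angle hypothesis enters.
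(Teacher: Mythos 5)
Your proof is correct, and in fact supplies the key estimate that the paper's own (much terser) argument leaves implicit. The paper observes that, since $A$ is elliptic of positive order on a closed manifold, its spectrum is discrete; it then perturbs $\beta$ to a nearby $\beta'$ so that $R_{\beta'}$ avoids the nonzero spectrum, and finally asserts that ``being discrete, the spectrum actually lies outside a truncated solid angle.'' Taken literally, that last inference does not follow from discreteness alone: a discrete set such as $\bigl\{\,n\,e^{i(\beta'+1/n)}\;:\;n\geq 1\,\bigr\}$ avoids the ray $R_{\beta'}$ yet meets every truncated sector centred on $\beta'$. What actually forces the spectrum to clear a sector is precisely your second step --- Seeley's parametrix with parameter --- which shows that $A-\lambda$ is invertible for $|\lambda|$ large with $\arg\lambda$ near $\beta$, hence only finitely many eigenvalues can have argument in a neighbourhood of $\beta$, and a finite avoidance argument then closes the proof. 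Your first step (compactness of $S^*M$ to upgrade the pointwise principal-angle hypothesis to a uniform sectorial bound on the leading symbol) is exactly what makes the parametrix estimate uniform; your third step is the natural geometric finish. One small organizational remark: your opening sentence invokes ``compact resolvent'' to conclude discreteness of the spectrum, but for a non-selfadjoint elliptic operator one must already know that the resolvent set is nonempty, which is itself a consequence of the parametrix construction of your second step --- so the logical order is the reverse of the narrative order, though the content is right. Overall, same strategy and same conclusion as the paper, but with the resolvent-with-parameter estimate made explicit where the published proof takes it for granted.
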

	\begin{proof}   Since $A$ is admissible, it is elliptic.   Since  the order of $A$ is positive,  the manifold $M$ being closed, the operator  has a purely discrete spectrum, which consists of countably many eigenvalues with no  accumulation point. So   if the spectrum  of $A$ meets the ray $R_\beta$,   there is a small perturbation $\beta'$ of $\beta$ such that the spectrum of $A$ does not meet $R_{\beta'}\setminus\{0\}$. Being discrete, the spectrum of $A$  actually lies outside   a truncated solid angle   $ \{z\in \C\setminus 0: \vert z\vert>\e,\; \arg z\in (\beta'-\epsilon,\beta'+\epsilon)\}$ for some  $\e>0$ chosen small enough so that  $0$ is  the only eigenvalue  in the ball of radius $\e$ centered at zero.  If moreover $A$ is invertible, then   its spectrum    lies outside   the  solid angle   $ \{z\in \C\setminus 0 : \arg z\in (\beta'-\epsilon,\beta'+\epsilon)\}$  and $\beta'$ is   an Agmon angle for $A$.  
	\end{proof}
	Let now $A$ be a weight in $\Psi^a(M,E)$ with spectral cut $\beta$. 
	Then for  $\Re(z)>0$, its complex powers  (see \cite{Se} for further details)
	\begin{equation}\label{eq.cxp}
	A_\beta^z= \frac{i}{2\pi} \int_{\Gamma_\beta} \lambda^z \, (\lambda-A)^{-1}\, d\lambda, \end{equation} 
	and respectively the operators \cite[Par. 2.6.1.2.]{Sc2}
	\begin{equation}\label{eq.Log}L_\beta(A, z)= \frac{i}{2\pi} \int_{\Gamma_\beta} \log_\beta \lambda \;\;\lambda^z \, (\lambda-A)^{-1}\, d\lambda, 
	\end{equation} 
	are bounded linear maps from any Sobolev closure $H^s\left(M, E\right)$ of $\Ci(M, E)$, $s\in \R$,  
	with values in $H^{s-a\Re(z)}\left(M, E\right)$ respectively  in $H^{s-a\Re(z)+\epsilon}\left(M, E\right)$, for any $\epsilon >0$. 
	Here $\Gamma_\beta$ is  a closed contour  in $\C\setminus \{re^{i\beta },\, r\geq 0\} $ around  the spectrum of $A$ oriented clockwise. These definitions extend to the whole complex plane
	$$A_\beta^z=A^k \;A_{\beta}^{z-k} \;\;\;, \;\;\text{resp.} \;\;L_\beta(A, z)=A^k\;  L_\beta(A, z-k), \qquad \Re(z)<k
	$$
	for any $k\in \N$, 
	$A_\beta^z$ is an operator in $\Psi(M,E) $ of order $az$ for any complex number $z$, and the logarithm 
	$$\log_\beta (A):=L_\beta(A, 0)
	$$ of $A$ is a bounded linear map from  $H^s\left(M, E\right)$ to $H^{s+\epsilon}\left(M, E\right)$, $\forall\epsilon >0$.
	One has by construction  $ \log_\beta A\,  A_\beta^{z}=
	A_\beta^{z}\, \log_\beta A$, $\forall z\in \C$.
	\begin{rk} 
		\label{rk:beta}Just as a complex power  does, the  logarithm depends on the choice of a
		spectral cut $\beta$. However, in order to simplify the notation we shall often drop
		the explicit mention of $\beta$. 
	\end{rk}
	The logarithm  of a classical pseudodifferential operator of positive order is   not classical. 
		Indeed, in a local trivialisation, the symbol of $\log_\beta A$ reads (see e.g.  \cite{Sc2})
	\begin{equation}\label{eq:symbollog}
	\sigma(\log_\beta A)( x,\xi)=a\, \log \vert \xi\vert I +
	\sigma_{\rm cl}(\log_\beta A)(x, \xi)
	\end{equation}
	where $a$ denotes the order of $A$ and $\sigma_{\rm cl}(\log_\beta  A)$ is a classical symbol of
	order zero with  homogeneous components   $\sigma_{-j}(\log_\beta   A)$ of degree $-j, j\in \Z_{\geq 0}$.
	\\    Moreover, the leading symbol $\sigma^L_{\rm cl}(\log_\beta A)$ of $\sigma_{\rm cl}(\log_\beta A)$ can be expressed in terms of the leading symbol $\sigma^L(A)$  of $A$ as
	\begin{equation}\label{eq:leadinglogsymb}\sigma^L_{\rm cl}(\log_\beta A)(x, \xi)=\log_\beta
	\left(\sigma^L(A)\left(x,\frac{\xi}{\vert
		\xi\vert}\right)\right)\quad \forall ( x,\xi)\in T^*M\setminus M\times\{0\}.\end{equation}

	\subsection[The local Wodzicki residue extended to logarithms]{The local Wodzicki residue extended to logarithms} 
	\label{ssecreslog}
	In spite of the fact that logarithms are not classical, the Wodzicki residue  on classical pseudodifferential operators does extend  to logarithms.\footnote{This extended residue  differs from the higher residue on log-polyhomogeneous operators introduced in \cite{L}.} As a first step  we extend to logarithms the local residue ${\rm res}_x$ defined in \eqref{eq:resx}.   
	For a weight $Q\in \Psi(M , E)$   with spectral cut $\beta$ we define the pointwise extended residue as
	$${\rm res}_x\left(\log_\beta Q\right):=\int_{|\xi|_x=1}{\rm tr}_x \left(\sigma_{-n}(\log_\beta Q)(x,\xi)\right)\, \dbar_S\xi.
	$$ 
	By \eqref{eq:symbollog} this is a natural extension of the pointwise residue on classical symbols since the integral over the sphere  vanishes on the logarithm of the norm. The fact that the $n$-form ${\rm res}_x\left(\log_\beta Q\right)\, dx $ defines a volume density will arise later as a consequence of a local formula for the $\zeta$-regularised  trace.

	The subsequent proposition   shows the locality of the extended residue  of $A\,\log Q$ in so far as it only depends on a finite number of homogeneous components of the symbols of $A$ and $Q$
	\begin{prop}\label{prop:reslogxlocalexpression}
		For any   weight $Q\in \Psi(M,E)$ of order $q$, 
		\begin{enumerate}
			\item (compare with \cite[Lemma  1.8.2]{Gi})\\
			the local logarithmic residue  at a  point $x\in M$  which reads
			\begin{equation} \label{eq:resxlogQ} 
			{\rm res}_x(\log Q)=  \frac{1}{ 2\pi i } \int_{\vert \xi\vert_x=1}\, \left( \int_\Gamma \log \lambda\, \sigma_{-q-n}
			( Q-\lambda)^{-1 }(x,\xi)\, d\lambda \right) \, \dbar_S\xi,   
			\end{equation}
			where $\Gamma$ is a contour around the spectrum of $Q$ oriented clockwise, is an algebraic expression  in the  $x$-jets  of the first $n$ homogeneous components  (taken in decreasing order of homogeneity) of the symbol $\sigma(Q)(x, \cdot) $ of $Q$ at that point given by the integral over the unit cotangent sphere of  an algebraic expression  in the  $(x,\xi)$-jets  of the first $n$   homogeneous components  of the symbol $\sigma(Q)(x, \cdot) $ of $Q$ at that point. Here  $\sigma_{-q-n}( Q-\lambda)^{-1 }$ is the $(-q-n)$--th homogeneous component of the resolvent $( Q-\lambda)^{-1 }$ of $Q$.
			\item (compare with \cite[Lemma 1.9.1]{Gi})\\
			Given a   differential operator $A=\sum_{\vert \alpha\vert \leq a} a_\alpha(x) D_x^\alpha \in \Theta(M,E)$ of order $a\in \Z_{\geq 0}$, then  ${\rm res}_x\left(A\log Q\right)$ is an algebraic expression  in the  coefficients $a_\alpha$ of $A$ and in the $x$-jets  of   the first $n+a$ homogeneous components  of the symbol $\sigma(Q)(x, \cdot) $ of $Q$ at that point.   
			
			In particular, if $Q=\sum_{\vert \beta\vert \leq q} b_\beta(x)D_x^\beta$ is a differential operator, then the local residue  $\res_x (A\,\log (Q)) $ is  an algebraic expression 
			in  the coefficients $a_\alpha$ and in  the $x$-jets of the coefficients $b_\beta$.
		\end{enumerate}
	\end{prop}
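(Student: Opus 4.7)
\textbf{Part (1): Derivation of \eqref{eq:resxlogQ} and algebraicity.} First, I would justify formula \eqref{eq:resxlogQ}. Starting from the Cauchy representation \eqref{eq.cxp} of $Q^z_\beta$ and the identity $\log_\beta Q=\frac{d}{dz}\big|_{z=0}Q^z_\beta$ (compare with \eqref{eq.Log}), the relation passes to the symbol level: writing $\sigma(Q^z_\beta)\sim\sum_{j\geq0}q^{(z)}_{-j}$ with $q^{(z)}_{-j}$ homogeneous of degree $qz-j$, one has $q^{(z)}_{-j}(x,\xi)=\frac{i}{2\pi}\int_{\Gamma_\beta}\lambda^z\,\sigma_{-q-j}((Q-\lambda)^{-1})(x,\xi)\,d\lambda$. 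Differentiating at $z=0$ and using $Q^0=I$ (so that $q^{(0)}_{-j}=\delta_{j,0}I$), I get for $j\geq 1$
\begin{equation*}
\sigma_{-j}(\log_\beta Q)(x,\xi)=\frac{1}{2\pi i}\int_{\Gamma_\beta}\log_\beta\lambda\,\sigma_{-q-j}((Q-\lambda)^{-1})(x,\xi)\,d\lambda,
\end{equation*}
which, for $j=n$, integrated against $\dbar_S\xi$ on $|\xi|_x=1$, yields \eqref{eq:resxlogQ}.

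Next I would argue algebraicity. By the standard recursion obtained from $\sigma((Q-\lambda)^{-1})\#\sigma(Q-\lambda)\sim I$, the homogeneous components $b_{-q-j}(x,\xi,\lambda):=\sigma_{-q-j}((Q-\lambda)^{-1})$ satisfy $b_{-q}=(q_0-\lambda)^{-1}$ and
\begin{equation*}
b_{-q-j}=-b_{-q}\sum_{\substack{k+l+|\gamma|=j\\ l<j}}\frac{(-i)^{|\gamma|}}{\gamma!}\,\partial_\xi^\gamma q_{-k}\cdot D_x^\gamma b_{-q-l},
\end{equation*}
where $q_{-k}$ denotes the homogeneous component of $\sigma(Q)$ of degree $q-k$. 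By induction on $j$, each $b_{-q-j}$ is a universal algebraic expression (rational in the $q_0-\lambda$ inverses, polynomial in the remaining $q_{-k}$ and their $(x,\xi)$-derivatives) of the $(x,\xi)$-jets of $q_0,\dots,q_{-j}$. Taking $j=n$, substituting into the $\lambda$-contour integral (which produces an algebraic expression involving $\log$ applied to eigenvalues of $q_0(x,\xi)$) and then into the sphere integral, gives the desired algebraic expression in the $x$-jets of the first homogeneous components of $\sigma(Q)$ at $x$.

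\textbf{Part (2): Extension to $A\log Q$ with $A$ differential.} Since $A=\sum_{|\alpha|\leq a}a_\alpha(x)D_x^\alpha$, the composition formula is exact (no remainder beyond smoothing) and truncated:
\begin{equation*}
\sigma(A\log Q)=\sum_{|\gamma|\leq a}\frac{(-i)^{|\gamma|}}{\gamma!}\,\partial_\xi^\gamma\sigma(A)\cdot D_x^\gamma\sigma(\log Q).
\end{equation*}
Split $\sigma(\log Q)=a_Q\log|\xi|I+\sigma_{\mathrm{cl}}(\log Q)$ as in \eqref{eq:symbollog}. The log-part contributes $a_Q\sigma(A)\log|\xi|I$ to $\sigma(A\log Q)$ because $D_x^\gamma\log|\xi|=0$ for $|\gamma|\geq 1$; this lives in the log-poly-homogeneous tower and has no classical $(-n)$-homogeneous piece. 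The classical $(-n)$-component therefore reads
\begin{equation*}
\sigma_{-n}(A\log Q)(x,\xi)=\sum_{|\gamma|\leq a}\frac{(-i)^{|\gamma|}}{\gamma!}\bigl[\partial_\xi^\gamma\sigma(A)\cdot D_x^\gamma\sigma_{\mathrm{cl}}(\log Q)\bigr]_{-n}(x,\xi),
\end{equation*}
and degree counting shows that only the components $\sigma_{-j}(\log Q)$ with $n\leq j\leq n+a$ appear. By part (1) each such component is algebraic in the jets of the first $n+a$ homogeneous components of $\sigma(Q)$; integrating over $|\xi|_x=1$ and inserting the coefficients $a_\alpha(x)$ finishes the first claim. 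When $Q=\sum_{|\beta|\leq q}b_\beta(x)D_x^\beta$ is itself differential, each $q_{-k}$ is polynomial in $\xi$ with coefficients that are polynomial in the $b_\beta$ and their $x$-derivatives, so the dependence on the jets of $q_{-k}$ collapses to a dependence on the $x$-jets of the $b_\beta$.

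\textbf{Expected main obstacle.} The delicate point is not the recursion itself but the bookkeeping of which contributions survive: confirming rigorously that the $\log|\xi|$ tower generated by $\sigma(A)\#(a_Q\log|\xi|I)$ does not pollute the classical $-n$ component, and checking that all $\lambda$-contour integrations can be carried out inside the induction (they only involve meromorphic integrands with explicit poles determined by $q_0$), so that the final expression remains a universal algebraic combination of finitely many jets and stays independent of the cut-off $\omega$ used in \eqref{eq:polyhom}.
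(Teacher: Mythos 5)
Your proof is correct and follows essentially the same route as the paper's: derive $\sigma_{-j}(\log Q)$ by differentiating the Cauchy representation of $Q^z$ at $z=0$, then expand $\sigma_{-n}(A\log Q)$ via the (here exact, since $A$ is differential) composition formula, and appeal to the resolvent recursion to see that each $\sigma_{-q-j}((Q-\lambda)^{-1})$ is a universal algebraic expression in the jets of $q_0,\dots,q_{-j}$. The main difference is cosmetic: you spell out the resolvent recursion and the disposal of the $\log|\xi|$ tower explicitly, while the paper states these as known; you also treat part (1) before part (2), whereas the paper derives the general expansion first and specializes to $A=I$.
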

	
	\begin{proof}
		Since
		\begin{eqnarray*}  
			\sigma_{-n} (A\,\log  Q)
			&= &\sum_{\vert\gamma\vert+j+k=a+n}\frac{(-i)^\gamma}{\gamma!} \partial_\xi^\gamma\sigma_{a-j}(A)\,\partial_x^\gamma\sigma_{-k} (\log  Q)\\
			&=& \sum_{\vert\gamma\vert + k=  a+n} (-i)^\gamma {\alpha\choose \gamma}  a_\alpha(x)\xi^{a-\gamma}\,  \partial_x^\gamma\sigma_{-k} (\log  Q),
		\end{eqnarray*}
		the fact that ${\rm res}_x\left(A\log Q\right)$   is an algebraic expression  in the  coefficients $a_\alpha$ of $A$ and in the $x$-jets  of   the first $n+a$ homogeneous components  of the symbol $\sigma(Q)(x, \cdot) $   follows from  a similar statement for   the homogeneous components $\sigma_{-k} (\log  Q)$ of order $-k\in \Z_{\leq 0}$   of the symbol of $\log Q$.   Now, 
		since $\log Q= \partial_z Q^z\vert_{z=0}$, the homogeneous component $\sigma_{-k}(\log Q)$  is derived by differentiating the homogeneous component $\sigma_{qz-k}( Q^z)$ of degree $qz-k$  at zero   of  the complex power $Q^z$. The latter is obtained from the homogeneous component $\sigma_{-q-k}$ of the resolvent $(Q-\lambda)^{-1}$ by means of the Cauchy formula
		$$ Q^z=\frac{1}{2i\pi  } \int_\Gamma \lambda^z ( Q-\lambda)^{-1 } d\lambda$$ where, as before, $\Gamma$ is a contour around the spectrum of $Q$ oriented clockwise. Thus we find
		$$\sigma_{-k}(\log Q)=  \frac{1}{2i\pi  } \partial_z \left(\int_\Gamma \lambda^z \sigma_{-q-k}( Q-\lambda)^{-1 }\, d\lambda\right)_{\vert_{z=0}}.$$ In particular, setting $k=n$ and integrating over the unit cotangent sphere we find
		$$
		{\rm res}_x(\log Q)=  \frac{1}{2\pi i} \int_{\vert \xi\vert_x=1}\partial_z \left(\int_\Gamma \lambda^z \sigma_{-q-n}( Q-\lambda)^{-1 }\, d\lambda\right)_{\vert_{z=0}}\, \dbar_S\xi,
		$$ 
		which yields (\ref{eq:resxlogQ}).   Since $\sigma_{-q-k}( Q-\lambda)^{-1 }(x,\xi)$ is an algebraic expression in the $(x,\xi)$-jets of the first $k$ homogeneous components $$\sigma_{q-1}(Q)(x,\xi), \sigma_{q-2}(Q)(x,\xi), \cdots, \sigma_{q-k}(Q)(x,\xi)$$ of the symbol of $Q$, so is $\sigma_{-k}(\log Q)(x,\xi)$ an algebraic expression in the jets of the first $k$ homogeneous components. This for $n=k$   yields   the first assertion (compare with Formula (8) in \cite{MP}). The second assertion follows in a similar way after implementing the differential operator $A$ and integrating over the unit cotangent sphere.
	\end{proof} 
	
	The locality of the extended residue can also be seen from the fact that it does not detect smoothing perturbations.
	\begin{cor}\label{cor:reslogxlocal}
		Let $A\in \Psi(M, E)$ be an admissible operator. Let  $R, S$  be two smoothing operators acting on $\Ci(M,E)$ such that the perturbed operators $ A+R$ and $A+S$ are   invertible. They  define weights  and  for any $x\in M$ we have
		\begin{equation}\label{eq:resxlog}
		{\rm res}_x\left(\log (A+R)\right) = {\rm res}_x\left(\log (A+ S)\right).
		\end{equation}
	\end{cor}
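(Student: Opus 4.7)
The plan is to reduce the statement to Proposition \ref{prop:reslogxlocalexpression}(1) by exploiting the fact that smoothing operators are ``invisible'' at the level of classical homogeneous symbol components.

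First I would check that $A+R$ and $A+S$ are genuinely weights, so that the logarithms appearing in \eqref{eq:resxlog} are well defined. Since $R$ and $S$ are smoothing, their symbols are asymptotically zero and hence $\sigma_L(A+R) = \sigma_L(A) = \sigma_L(A+S)$. Any principal angle $\beta$ of $A$ therefore remains a principal angle for both $A+R$ and $A+S$; by Lemma \ref{lem:agmon angle}, a small perturbation yields a common Agmon angle $\beta'$ that works for both operators, because each spectrum is discrete and avoids $R_\beta$ modulo finitely many eigenvalues near zero. Together with the invertibility hypothesis, $A+R$ and $A+S$ are weights of the same positive order, and $\log_{\beta'}(A+R)$ and $\log_{\beta'}(A+S)$ are well defined via \eqref{eq.Log}.

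The main observation is then that the classical homogeneous components satisfy
\[
\sigma_{a-j}(A+R)(x,\xi) \;=\; \sigma_{a-j}(A)(x,\xi) \;=\; \sigma_{a-j}(A+S)(x,\xi)
\]
for every $j \ge 0$ and every $(x,\xi)\in T^*M\setminus (M\times\{0\})$, since the asymptotic expansions of the symbols of $R$ and $S$ are identically zero. By Proposition \ref{prop:reslogxlocalexpression}(1), the density $\res_x(\log(A+R))$ is an algebraic expression in the $x$-jets of the first $n$ classical homogeneous components of the symbol of $A+R$, and likewise for $A+S$. Since these input data coincide, the two residues are equal, which gives \eqref{eq:resxlog}.

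I do not anticipate a serious obstacle: once Proposition \ref{prop:reslogxlocalexpression} is in hand, the argument is bookkeeping. The only minor point requiring care is the selection of a common Agmon angle $\beta'$, which is routine because the spectra of $A+R$ and $A+S$ are discrete and the leading symbols match. A worry one could raise is whether the identity persists under independent choices of spectral cuts for the two logarithms; this is harmless, since changing the Agmon angle modifies $\log Q$ by $2\pi i$ times a finite-rank (hence smoothing) spectral projection, and such a perturbation cannot be detected by the symbol-level algebraic expression in Proposition \ref{prop:reslogxlocalexpression}(1).
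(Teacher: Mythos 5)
Your argument is correct and matches the paper's proof, which likewise reduces the statement to Proposition~\ref{prop:reslogxlocalexpression} together with the observation that a smoothing perturbation does not alter the homogeneous symbol components. Your extra detail on verifying the weight property and finding a common Agmon angle is a useful elaboration rather than a different approach; just note that the final remark (that changing the Agmon angle alters $\log Q$ by a finite-rank projection) holds only for nearby spectral cuts, which is all that is needed once you have fixed a common $\beta'$.
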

	\begin{proof}
		This follows from Proposition \ref{prop:reslogxlocalexpression} and the fact that a smoothing perturbation  of an operator does not modify the homogeneous components of its symbol.
	\end{proof}
	Let  $\Delta\in \Theta(M,E)$  be  an admissible operator. 
		Let $E$ be equipped with a hermitian metric, which combined with a Riemannian metric on $M$ induces an inner product on $\Ci(M,E)$. It follows from the theory of elliptic operators on a closed manifold (see e.g. \cite{Gi}) that the orthogonal projection $\pi_\Delta$ onto the kernel Ker $(\Delta)$ is  a finite rank operator and hence smoothing. Consequently,   the operator $ Q:= \Delta +\pi_\Delta$   is a weight.
		On the grounds of Corollary \ref{cor:reslogxlocal},   we define the {\bf pointwise logarithmic residue} of $\Delta$ as 
		\begin{equation}\label{eq:reslogARX}
		\res_x(\log \Delta):=\res_x(\log (\Delta+\pi_\Delta))\quad \forall x\in M. 
		\end{equation} As we shall see below, the local density $\res_x( A\, \log \Delta)\, dx$ actually defines a global density on the manifold, confirming the known fact that the residue extends to logarithms \cite{O1,O2, PaSc}.

	\subsection[The Wodzicki residue as a complex residue]{The Wodzicki residue as a complex residue}
	
	Given a symbol $\sigma(x,\xi)\in {\mathcal S}^a\left(U,{\rm End }(E) \right)$ with $x$ a point in $M$ and  $U\subset M$ an open  neighborhood of $x$,  
	the  {\bf cut-off integral}\footnote{ also called    {\bf Hadamard finite part integral} see e.g. \cite[Example 2 chapter II]{Sc2}, and also \cite{Ge}.} is defined as  the finite part
	\begin{equation}\label{eq:cutoff} 
	\cutoffint_{\R^n}{\rm tr}_x  \sigma(x,\xi)\, \dbar\,\xi: = {\rm fp}_{R\to  \infty} \int_{B(0, R)} {\rm tr}_x \sigma(x,\xi) \, \dbar\,\xi
	\end{equation}
	
	\begin{rk} Whereas the residue vanishes on symbols whose order has real part smaller than $-n$, the cut-off integral coincides on those symbols with the ordinary  integral on $\R^n$.   A straightforward computation  shows  that, like the local residue, the cut-off integral also vanishes on polynomial symbols. \end{rk}
	
	We   need   holomorphic families of classical pseudodifferential symbols   first introduced by Guillemin in \cite{Gu} and extensively used by Kontsevich and Vishik in \cite{KV}.  The idea is to embed a symbol $\sigma $ in a family $z\mapsto \sigma(z) $ depending holomorphically on a complex parameter $z$.\\
	
	\begin{defn}\label{defn:holsymb}  Let $U$ be an open subset of $M$. We call  a   family   $ \left( \sigma(z) \right)_{z\in \Omega}$ of symbols in  ${\mathcal S} \left(U , {\rm End}(E)\right)$  parametrised by a domain $\Omega$ of $\C $ {\bf holomorphic} at a point $z_0\in \Omega$ if,  with the notation of \eqref{eq:polyhom} and (\ref{eq:asymptsymb}) we have  \begin{enumerate}
			\item $\sigma(z)(x,\cdot)$ is uniformly in $x$ on any compact subset of $U$, holomorphic at $z_0$ as a function of $z$ with values in $C^\infty \left(U\times\R^n,{\rm End}(E)\right)$,
			\item for any $z$ in  a neighborhood of $z_0$ there is an asymptotic expansion of the type (\ref{eq:polyhom})
			\begin{equation}\label{eq:logclassical}
			\sigma(z)(x,\cdot)\sim \sum_{j\geq 0}
			\sigma_{\alpha(z)-j}(z)(x,\cdot),
			\end{equation} 
			with $\alpha(z):=-qz+a$ for some positive number $q$ and $a$ the order of $\sigma:=\sigma(0)$,
			\item for any integer $N\geq 1$ the remainder
			$$\sigma_{(N)} (z):= \sigma(z)- \sum_{j=0}^{N-1}
			\sigma_{\alpha(z)-j}(z)$$ is  uniformly in $x$ on any compact subset of $U$,  holomorphic at $z_0$ as a function of $z$ with
			values in $C^\infty \left(U\times\R^n,{\rm End}(E)\right)$  with $k$--th $z$-derivative
			\begin{equation}\label{e:kthderivlogclassical}
			\sigma^{(k)}_{(N)}(z) := \partial_z^k(\sigma_{(N)}(z))
			\end{equation}
			a symbol on $U$ of order $\alpha(z)-N + \e$ for any $\e>0$  uniformly in $x$ on any compact subset of $U$ and  locally uniformly in $z$ around $z_0$, i.e. the $k$-th derivative $ \partial_z^k\sigma_{(N)}(z) $ satisfies a local uniform estimate in $z$ around $z_0$
			\begin{equation}\label{eq:symbolestimate}\Vert\partial_\xi^\beta  \partial_z^k\sigma_{(N)}(z)(x,\xi)\Vert\leq
			C_\beta \, \langle \xi\rangle ^{\Re(qz)-N-\vert
				\beta\vert}\quad \forall \xi\in \R^n,\end{equation} where  $\Vert A\Vert:= \sqrt{{\rm tr}_{x}(A^*A)}$ is the norm on ${\mathcal L}(    M\times M, {\rm End}(E))$ and where we  have set $\langle \xi\rangle:= \sqrt{1+\vert \xi\vert^2}$ with $\vert \cdot \vert$
			the Euclidean norm of $\xi$.
		\end{enumerate} 
	\end{defn}
	\begin{ex} If $\sigma\in S(U,{\rm End}(E))$ is a symbol of order $\alpha(0)$, then $\sigma(z)(x, \xi)=\sigma(x, \xi)\, \langle \xi\rangle^{-z}$ defines a holomorphic family of order  $\alpha(z)=-z+\alpha(0)$.
	\end{ex}
	The following assertion can be shown  on direct inspection of the cut-off integral.
	\begin{prop}\label{prop:KVPSsym}  For any holomorphic family $ \sigma(z)$ of classical symbols  parametrised by $  \C$ with affine order $\alpha(z)=-qz+a$ for some positive real number $q$ and some real number $a$,
		\begin{enumerate}
			\item  the map $$z\mapsto \cutoffint_{\R^n} \sigma(z)(x, \xi)
			\dbar\xi$$ is meromorphic with simple poles $d_j:=\frac{ a+n-j}{q},\quad j\in \Z_{\geq 0}$.
			\item $\,${\rm\cite{KV}} The complex  residue at the point $d_j$ in $ \C$  is given by:
			\begin{equation}\label{eq:classicalKV}{\rm Res}_{z=d_j}\left( \cutoffint_{\R^n}
			{\rm tr}_x\sigma(z)( x,\xi)\, \dbar\xi
			\right)= \frac{1}{q}{ \rm
				res}_x (\sigma(d_j)  ).
			\end{equation} 
			\item  $\,${\rm\cite{PaSc}}  The
			finite part
			at  the pole  $d_j$ differs from the cut-off regularised integral $
			\cutoffint_{\R^n} {\rm tr}_x\sigma (d_j)(x, \xi)\,
			\dbar\xi$ by
			\begin{equation}\label{eq:PSclassicalsymb}
			{\rm fp}_{z=d_j} \left(\cutoffint_{\R^n} {\rm tr}_x\sigma(z)(
			x, \xi)\, \dbar\xi\right) -\cutoffint_{\R^n} {\rm tr}_x\sigma(d_j)(x, \xi) \, \dbar\xi=  
			\frac{1}{q}{ \rm
				res}_x(\sigma^\prime (d_j)).
			\end{equation} Here    the noncommutative residue  is extended to the possibly
			non-classical
			symbol \footnote{The asymptotic expansion of
				$\tau_j  (x,\xi)$  as
				$\vert \xi\vert \to \infty$ might present logarithmic terms $\log\vert\xi\vert$,
				which vanish on the unit sphere and therefore do not explicitly arise in the
				following definition.} $\tau_j  (x,\xi):=\sigma^\prime(d_j)(x,\xi)$ using  the same formula as in Equation
			(\ref{eq:resx})
			$${ \rm
				res}_x (\tau_j ):= \int_{|\xi|_x=1}{\rm tr}_x \left(\tau_j\right)_{-n} ( x,\xi)\,
			\dbar_S\xi.$$
		\end{enumerate} 
	\end{prop}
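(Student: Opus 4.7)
The plan is to exploit the asymptotic expansion in Definition \ref{defn:holsymb} to reduce the cut-off integral to finitely many homogeneous pieces plus a holomorphic remainder. Near a prospective pole $d_j$, I choose $N$ with $\Re(\alpha(z)-N)<-n$ throughout a neighborhood of $d_j$. By the uniform symbol estimates \eqref{eq:symbolestimate}, the remainder integral $\int_{\R^n}{\rm tr}_x\sigma_{(N)}(z)(x,\xi)\,\dbar\xi$ converges absolutely and is holomorphic in $z$, so the meromorphic structure is governed entirely by the homogeneous summands.

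For each homogeneous component of degree $\alpha(z)-k$, I split the integration region into $\{|\xi|\leq 1\}$ and $\{|\xi|\geq 1\}$. The inner piece $C_k(z):=\int_{|\xi|\leq 1}\omega(\xi){\rm tr}_x\sigma_{\alpha(z)-k}(z)(x,\xi)\,\dbar\xi$ is holomorphic in $z$. In the outer region, polar coordinates give
\begin{equation*}
\int_{1\leq|\xi|\leq R}{\rm tr}_x\sigma_{\alpha(z)-k}(z)(x,\xi)\,\dbar\xi=\left(\int_1^R r^{\alpha(z)-k+n-1}\,dr\right)I_k(z),
\end{equation*}
with $I_k(z):=\int_{|\xi|_x=1}{\rm tr}_x\sigma_{\alpha(z)-k}(z)(x,\xi)\,\dbar_S\xi$. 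Using $\alpha(z)-k+n=-q(z-d_k)$, for $z\neq d_k$ the radial integral equals $\frac{R^{-q(z-d_k)}-1}{-q(z-d_k)}$, whose Hadamard finite part in $R$ is $\frac{1}{q(z-d_k)}$. Hence the $k$-th contribution to $\cutoffint_{\R^n}{\rm tr}_x\sigma(z)(x,\xi)\,\dbar\xi$ reads $C_k(z)+\frac{I_k(z)}{q(z-d_k)}$, meromorphic with at most a simple pole at $z=d_k$; summing over $k$ and adding the holomorphic remainder establishes (1).

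For (2) and (3), only the $k=j$ summand contributes to the Laurent singular data at $d_j$. Its residue is $\mathrm{Res}_{z=d_j}\frac{I_j(z)}{q(z-d_j)}=\frac{I_j(d_j)}{q}$, and since $\alpha(d_j)-j=-n$ this equals $\frac{1}{q}\res_x(\sigma(d_j))$, proving (2). Expanding $\frac{I_j(z)}{q(z-d_j)}=\frac{I_j(d_j)}{q(z-d_j)}+\frac{I_j'(d_j)}{q}+O(z-d_j)$, the finite part at $d_j$ is $\frac{I_j'(d_j)}{q}$ plus the values at $d_j$ of the remaining holomorphic terms. Differentiating the positive-homogeneity identity $\sigma_{\alpha(z)-j}(z)(x,\xi)=|\xi|^{\alpha(z)-j}\sigma_{\alpha(z)-j}(z)(x,\xi/|\xi|)$ in $z$ at $d_j$ produces a $-q\log|\xi|$ factor which vanishes on the unit sphere, so $I_j'(d_j)$ equals the integral over $|\xi|_x=1$ of the $(-n)$-homogeneous component of $\sigma'(d_j)$, i.e.\ $\res_x(\sigma'(d_j))$.

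The main obstacle is the bookkeeping needed to identify the remaining holomorphic terms at $z=d_j$ with $\cutoffint_{\R^n}{\rm tr}_x\sigma(d_j)(x,\xi)\,\dbar\xi$. The key observation is that for $k\neq j$ one has $\alpha(d_j)-k\neq -n$, so $C_k(d_j)+\frac{I_k(d_j)}{q(d_j-d_k)}$ is exactly the cut-off integral of $\omega(\xi)\sigma_{\alpha(d_j)-k}(d_j)$, while for $k=j$ the polar-coordinate computation shows $\cutoffint_{\R^n}\omega(\xi){\rm tr}_x\sigma_{-n}(d_j)(x,\xi)\,\dbar\xi=C_j(d_j)$ since the $\log R$ contribution has finite part zero. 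Together with the holomorphic remainder these pieces reassemble into $\cutoffint_{\R^n}{\rm tr}_x\sigma(d_j)(x,\xi)\,\dbar\xi$, and (3) follows at once.
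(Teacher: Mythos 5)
Your proof is correct and carries out precisely the ``direct inspection of the cut-off integral'' that the paper alludes to (the paper itself only cites \cite{KV} and \cite{PaSc} without writing out the computation): decomposing $\sigma(z)$ into finitely many excised homogeneous pieces plus a trace-class remainder, computing the radial integral $\int_1^R r^{-q(z-d_k)-1}\,dr$ to produce the simple pole $\frac{I_k(z)}{q(z-d_k)}$, and then reading off the residue and Laurent constant term at $d_j$, with the $-q\log|\xi|$ factor from differentiating the homogeneity exponent vanishing on the unit sphere. The final bookkeeping step identifying the holomorphic part at $z=d_j$ with $\cutoffint_{\R^n}\mathrm{tr}_x\sigma(d_j)\,\dbar\xi$ is handled correctly, in particular the observation that the $k=j$ homogeneous piece contributes only a $\log R$ divergence whose Hadamard finite part is zero.
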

	We are now ready to introduce holomorphic families of pseudodifferential operators. 
	
	\begin{defn}\label{defn:holop}
		Following \cite[Definition 1.14]{PaSc} we call  a family   $ \left( A(z) \right)_{z\in \Omega}$ of operators in  $\Psi \left(M , E\right)$  parametrised by a domain $\Omega$ of $\C $ holomorphic at a point $z_0\in \Omega$ if, with the notation of Section \ref{ssecres}, in each local trivialisation $U$ of $E\to M$ we have 
		\[
		A(z)=\sum_{j=1}^J A_j(z)+R(z)                                                                                                                                                                                                                                                                                                                                                                                                                                                                       \]
		with
		\begin{enumerate}
			\item $A_j(z)={\rm Op}(\sigma_j(z))$, where $\sigma_j(z)$ is a holomorphic family of polyhomogeneous symbols on $U$,
			\item $R(z)$ is a smoothing operator with Schwartz kernel $R(z,x,y)\in C^\infty(\Omega\times U\times U,{\rm End}(E))$ holomorphic in $z$.
		\end{enumerate}
	\end{defn} 
	Integrating the results of Proposition \ref{prop:KVPSsym} over $M$ yields the following theorem which we quote without proof, referring the reader to \cite{KV} and \cite{PaSc}.   Let us however recall that the linear map ${\rm TR}$ introduced in the theorem below is the canonical trace popularised in \cite{KV} i.e., the unique linear form (up to a multiplicative factor) on the subset of  $  \Psi(M,E)$ consisting of non integer order classical pseudodifferential operators,  which vanishes on commutators that lie in this set. It extends to differential operators where it vanishes and it coincides with the $L^2$-trace Tr on trace-class operators i.e. the real part of the order is smaller than $-n$. 
	
	\begin{thm}\label{thm:KVPSop}  For any holomorphic family  $ A(z)\in \Psi(M, E)$ of classical operators parametrised by $   \C$ with holomorphic order $   -qz+a$ for some positive $q$ and some real number $a$,
		\begin{enumerate}
			\item  the meromorphic map $z\mapsto {\rm TR}_x(A(z)):=\cutoffint_{\R^n} {\rm tr}_x\sigma(z)(
			x, \xi) \dbar\xi$ integrates over $M$ to  the map $$z\mapsto {\rm TR}\left(A(z)\right):= \int_M {\rm TR}_x(A(z))\, dx $$ which is  meromorphic with simple poles $d_j:=\frac{ a+n-j}{q},\quad j\in \Z_{\geq 0}$.
			\item $\,${\rm\cite{KV}} The complex  residue at the point $d_j$   is given by:
			\begin{equation}\label{eq:classicalKV2}{\rm Res}_{z=d_j} {\rm TR}\left(A(z)\right)= \frac{1}{q}{ \rm
				Res} (A(d_j) ).
			\end{equation} 
			\item  $\,${\rm\cite{PaSc}}  If $A(d_j)$   differs from a differential operator\footnote{This yields another application of the results of \cite{PaSc} since only the case  $A(d_j)$ differential was considered in the examples given in that paper.}    by a trace-class pseudodifferential operator $T_j$, then $A(d_j)$ has a well-defined canonical trace ${\rm TR}(A(d_j))={\rm Tr}(T_j)$   and  $A^\prime (d_j)$ has a well defined 
			Wodzicki residue $${ \rm
				Res} (A^\prime(d_j) ):=\int_M {\rm res}_x\left(A^\prime (d_j)\right)\, dx$$
			where
			\[
			{\rm res}_x\left(A^\prime (d_j)\right):=\int_{\vert \xi\vert_x=1} \sigma_{-n}\left(A^\prime (d_j)\right)(x,\xi)\, \dbar_S\xi
			\]
			at the point  $d_j$    and we have  
			\begin{equation}\label{eq:PSclassicalop}
			{\rm fp}_{z=d_j}{\rm TR}\left(A  (z)\right)=  {\rm Tr}(T_j)-
			\frac{1}{q}{ \rm
				Res}(A^\prime (d_j)).
			\end{equation} 
		\end{enumerate} 
	\end{thm}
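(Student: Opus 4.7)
The plan is to deduce the global theorem from the pointwise Proposition \ref{prop:KVPSsym} by integrating over $M$ and controlling the interchange of integration with the extraction of Laurent coefficients at the candidate poles $d_j=(a+n-j)/q$.

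First I would localise. Fix a finite atlas $\{(U_\a,\chi_\a)\}$ of $M$ trivialising $E$, with subordinate partition of unity $\{\phi_\a\}$ and auxiliary cutoffs $\psi_\a \in \Ci_c(U_\a)$ that are identically $1$ on $\mathrm{supp}(\phi_\a)$. Pseudolocality gives a decomposition
\[
A(z) = \sum_\a \phi_\a\, A(z)\,\psi_\a + R(z),
\]
in which $R(z)$ is a holomorphic family of smoothing operators whose canonical trace coincides with its holomorphic $L^2$-trace and hence contributes an entire function to $\mathrm{TR}(A(z))$. On each chart the operator $\phi_\a A(z)\psi_\a$ has a local polyhomogeneous symbol $\sigma_\a(z)$ of affine order $-qz+a$, and $z\mapsto \sigma_\a(z)$ is a holomorphic family in the sense of Definition \ref{defn:holsymb} by Definition \ref{defn:holop}.

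Next I would apply Proposition \ref{prop:KVPSsym} fibrewise: for each $x \in U_\a$, the map $z \mapsto \cutoffint_{\R^n} \mathrm{tr}_x \sigma_\a(z)(x,\xi)\, \dbar\xi$ is meromorphic with at most simple poles at the $d_j$. The uniform symbol estimate \eqref{eq:symbolestimate} together with the compactness of $M$ permits me to swap integration against $\phi_\a\,dx$ with the extraction of the Laurent coefficients at $z=d_j$; summing over $\a$ and adjoining the smoothing contribution produces a meromorphic function $z\mapsto\mathrm{TR}(A(z))$ whose poles lie in $\{d_j\}$. Integrating the local residue formula of Proposition \ref{prop:KVPSsym}(2) and invoking Wodzicki's theorem that $\mathrm{res}_x(A(d_j))\, dx$ is a globally defined density yields
\[
\mathrm{Res}_{z=d_j}\mathrm{TR}(A(z)) = \frac{1}{q}\int_M \mathrm{res}_x(A(d_j))\, dx = \frac{1}{q}\mathrm{Res}(A(d_j)),
\]
which is assertion (2). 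For assertion (3), I would write $A(d_j)=D+T_j$ with $D$ differential and $T_j$ trace-class: by the remark following \eqref{eq:cutoff} the cut-off integral annihilates the polynomial symbol of $D$, while it reduces to the ordinary $\R^n$-integral on the order-$<-n$ symbol of $T_j$, so Fubini gives $\int_M\cutoffint_{\R^n}\mathrm{tr}_x\sigma(A(d_j))\,\dbar\xi\,dx = \mathrm{Tr}(T_j)$. Combining this with the $M$-integrated version of the pointwise finite-part identity \eqref{eq:PSclassicalsymb} yields the claimed formula for $\mathrm{fp}_{z=d_j}\mathrm{TR}(A(z))$.

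The main obstacle is the Kontsevich--Vishik gluing underlying the very definition of $\mathrm{TR}$, namely that the chart-dependent cut-off densities descend to a globally defined complex density on $M$ away from the $d_j$; the polynomial (and hence cut-off-invariant) defects absorb the ambiguities introduced by changes of coordinates. Once this is granted, the passage from Proposition \ref{prop:KVPSsym} to Theorem \ref{thm:KVPSop} is essentially Fubini plus the uniform control on the remainders \eqref{eq:asymptsymb} in $x$ and $z$ provided by \eqref{eq:symbolestimate}.
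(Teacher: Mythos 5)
Your approach — localise by a partition of unity, apply Proposition \ref{prop:KVPSsym} pointwise, integrate over $M$ with the uniform estimates \eqref{eq:symbolestimate} justifying the interchange of $\int_M$ with the extraction of Laurent coefficients, and globalize the residue densities via Wodzicki's theorem — is exactly the route the paper indicates ("Integrating the results of Proposition \ref{prop:KVPSsym} over $M$ yields the following theorem"), which the authors then leave to \cite{KV} and \cite{PaSc} without carrying out. Your way of handling item (3), by integrating \eqref{eq:PSclassicalsymb} directly rather than feeding the difference-quotient family $\tau_j(z)=(\sigma(z)-\sigma(d_j))/(z-d_j)$ back into the proposition, sidesteps the obstruction the authors flag in the remark after the theorem (namely that $\tau_j$ is not a holomorphic family of classical symbols), and is a legitimate simplification.

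There is, however, one place where your write-up glosses over something you must actually verify. Integrating \eqref{eq:PSclassicalsymb} over $M$ produces $\mathrm{fp}_{z=d_j}\mathrm{TR}(A(z)) = \mathrm{Tr}(T_j) + \frac{1}{q}\mathrm{Res}(A'(d_j))$, with a \emph{plus} sign; this is the opposite sign to the one printed in \eqref{eq:PSclassicalop}. The two displayed formulae \eqref{eq:PSclassicalsymb} and \eqref{eq:PSclassicalop} are mutually inconsistent as stated. A direct check on $\sigma(z)(\xi)=c(z)\vert\xi\vert^{-qz+a}$ shows that \eqref{eq:PSclassicalsymb} carries the correct sign, and that the $+$ sign is moreover the one compatible with $\zeta(A,Q)(0)=\mathrm{Tr}(T)-\frac{1}{q}\mathrm{Res}(A\log Q)$ in Theorem \ref{thm:zetaresM} once one notes that $A'(0)=-A\log Q$ for the family $A(z)=AQ^{-z}$. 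So the sign your derivation produces is the intended one and \eqref{eq:PSclassicalop} carries a typo; but you should not assert that the integration "yields the claimed formula" without performing this bookkeeping — the sign is the one nontrivial step, and it is precisely where the statement as printed and your calculation part ways. Similarly, when you appeal to the globality of $\mathrm{res}_x(\sigma'(d_j))\,dx$, note that $\sigma'(d_j)$ is only log-polyhomogeneous, so Wodzicki's theorem does not apply directly; the cleanest route is to observe that in \eqref{eq:PSclassicalsymb} the other two terms are chart-independent (the finite part by analytic continuation from non-integer orders, the cut-off integral because $A(d_j)$ is differential plus trace-class), hence so is the residue term.
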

	
	\begin{rk} Formula  \eqref{eq:PSclassicalop}   formally
		follows from  \eqref{eq:PSclassicalsymb}  applied
		to the family
		$\tau_j(z)=\frac{\sigma(z)-\sigma(d_j)}{z-d_j}$ since
		$\tau_j(d_j)=\sigma^\prime(d_j)$. However,   $\tau_j(z)$   not strictly speaking being a holomorphic family of classical
		symbols   since the two symbols
		$\sigma(z)$ and   $\sigma(d_j)$  have different orders outside $d_j$, which do not differ by an integer, the proof is actually slightly more indirect.\\
		Consequently, ${ \rm Res}(A^\prime (d_j))$ is  the noncommutative residue  extended   to the typically non-classical
		operator  $A^\prime (d_j)$.  
	\end{rk}
	\subsection[$\zeta$-regularised traces]{$\zeta$-regularised traces}

	Given a weight $Q\in \Psi(M,E)$ of order $q\in \R_+$ and an operator   $A\in \Psi(M,E)$ (not necessarily admissible) of order $a\in \R$, the map $z\mapsto A(z):= A\, Q^{-z} $  defines a holomorphic family. The subsequent theorem  quoted from \cite{PaSc} follows from applying Theorem \ref{thm:KVPSop} to this family. 
	
	\begin{thm}\label{thm:zetaresM}
		Given a weight $Q\in \Psi(M,E)$ of order $q\in \R_+$ and an operator   $A\in \Psi^{a}(M,E)$, the map 
		\begin{equation}\label{eq:zetaAQz} 
		z\mapsto \zeta(A,Q)(z):={\rm TR}(AQ^{-z})
		\end{equation} 
		called the {\it  $\zeta$-regularised trace of $A$} with respect to the weight $Q$, is holomorphic on a half plane  $\Re(z)>\frac{n+a}{q}$,   meromorphic  on the whole complex plane with poles at $d_j=\frac{a+n-j}{q},\, j\in \Z_{\geq 0}$ and the complex residue at this pole  can be expressed as a Wodzicki residue
		$${\rm Res}_{z=d_j} \zeta(A,Q)(z)=\frac{1}{q}{\rm Res}(A\, Q^{-d_j}).$$  
		If  $A $ differs from a differential operator by a trace-class pseudodifferential operator $T$, the $n$-form  \[
		{\rm res}_x\left(A \, \log Q\right)\, dx :=\int_{\vert \xi\vert_x=1} \sigma_{-n}\left(A\, \log Q\right)(x,\xi)\, \dbar_S\xi\, dx .\]
		defines a global density on $M$ which integrates to the 
		extended
		Wodzicki residue $${ \rm
			Res} (A \,\log Q ):=\int_M {\rm res}_x\left(A \,\log Q\right)\, dx$$ of  $   A\, \log Q$.  \\   The zeta function $\zeta  (A,Q)$ is holomorphic at zero and we have
		\begin{equation}\label{eq:PSclassicalopzeta}
		\zeta(A,Q)( 0 )=\lim_{z\to 0}\zeta(A,Q)(z)=  {\rm Tr}(T)
		-  \frac{1}{q}{\rm Res}(A\, \log Q).
		\end{equation} 
	\end{thm}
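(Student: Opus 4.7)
The plan is to specialize Theorem \ref{thm:KVPSop} to the one-parameter family $\mathcal{A}\colon z \longmapsto A(z):=A\, Q^{-z}$, which has affine order $-qz+a$ with $q=\mathrm{ord}(Q)>0$. The first step is to check that $z\mapsto AQ^{-z}$ is in fact a holomorphic family of classical pseudodifferential operators in the sense of Definition \ref{defn:holop}. Since $Q$ is a weight, Lemma \ref{lem:agmon angle} supplies an Agmon angle $\beta$, and the complex powers $Q^{-z}$ are defined by the Cauchy-type integral \eqref{eq.cxp}. Classical Seeley-type resolvent estimates on $(\lambda-Q)^{-1}$ then yield a holomorphic asymptotic expansion of the symbol of $Q^{-z}$ whose remainders satisfy the uniform bounds \eqref{eq:symbolestimate} (see \cite{Se}); left composition with $A\in\Psi^a(M,E)$ preserves holomorphy and shifts the order by $a$.

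Once holomorphy is granted, parts (1)-(2) of Theorem \ref{thm:KVPSop} directly yield the meromorphic extension of $z\mapsto\zeta(A,Q)(z)$ to $\C$, the simple poles at $d_j=(a+n-j)/q$, and the identity $\mathrm{Res}_{z=d_j}\zeta(A,Q)(z) = \tfrac{1}{q}\,\mathrm{Res}(AQ^{-d_j})$. Holomorphy on the half-plane $\Re(z)>(n+a)/q$ is immediate: there the order of $AQ^{-z}$ has real part strictly less than $-n$, so $AQ^{-z}$ is trace class and ${\rm TR}$ reduces to the $L^2$-trace. For the constant-term assertion, differentiating the Cauchy integral gives $A'(z)=-AQ^{-z}\log_\beta Q$, hence $A'(0)=-A\log_\beta Q$ up to the projection $\pi_Q$ onto $\ker Q$, which is smoothing and hence invisible to the residue by Corollary \ref{cor:reslogxlocal}. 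Under the hypothesis that $A=D+T$ with $D\in\Theta(M,E)$ differential and $T$ trace class, the operator $A(0)=A$ satisfies the hypothesis of Theorem \ref{thm:KVPSop}(3) at $d_j=0$. That part then tells us that $\res_x(A'(0))\,dx$ is a global density on $M$, and since it coincides up to sign with $\res_x(A\log Q)\,dx$, this proves the global density statement for $A\log Q$. Finally, substituting $A'(0)=-A\log Q$ into \eqref{eq:PSclassicalop} at $d_j=0$ yields \eqref{eq:PSclassicalopzeta}.

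The main obstacle is entirely concentrated in the first step: verifying Definition \ref{defn:holop} for $AQ^{-z}$, i.e.\ producing the uniform symbol estimates on the remainders $\sigma_{(N)}(z)$ and their $z$-derivatives $\partial_z^k\sigma_{(N)}(z)$. This rests on careful bookkeeping with the Agmon cone and uniform bounds on the symbol of $(\lambda-Q)^{-1}$ along the contour $\Gamma_\beta$; it is the classical Seeley construction and is standard but nontrivial. Once this ingredient is in place, everything else reduces to the corresponding statement of Theorem \ref{thm:KVPSop} applied to this particular family, so no further work is needed.
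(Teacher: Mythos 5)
Your approach is the same one the paper takes: the paper states that this theorem ``follows from applying Theorem \ref{thm:KVPSop} to this family'' and supplies no further detail, so your verification that $z\mapsto AQ^{-z}$ is a holomorphic family in the sense of Definition \ref{defn:holop} (via Seeley-type resolvent estimates) is the substantive ingredient, and you correctly identify it as where the real work sits.

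Two points need attention. First, the sign. You correctly compute $A'(0)=-A\log_\beta Q$, and then claim that substituting this into \eqref{eq:PSclassicalop} at $d_j=0$ yields \eqref{eq:PSclassicalopzeta}. But as printed, \eqref{eq:PSclassicalop} reads $\mathrm{fp}_{z=d_j}\,\mathrm{TR}(A(z))=\mathrm{Tr}(T_j)-\tfrac{1}{q}\,\mathrm{Res}(A'(d_j))$, and inserting $A'(0)=-A\log Q$ gives $\mathrm{Tr}(T)+\tfrac{1}{q}\,\mathrm{Res}(A\log Q)$, which has the opposite sign to \eqref{eq:PSclassicalopzeta}. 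The discrepancy traces to a sign typo in \eqref{eq:PSclassicalop}, which is inconsistent with its own pointwise version \eqref{eq:PSclassicalsymb}: integrating the latter over $M$ and using $\mathrm{TR}(A(d_j))=\mathrm{Tr}(T_j)$ gives $\mathrm{fp}_{z=d_j}\,\mathrm{TR}(A(z))=\mathrm{Tr}(T_j)+\tfrac{1}{q}\,\mathrm{Res}(A'(d_j))$, and this plus sign is the correct one --- check it by taking $A=I$ against the known $\zeta_Q(0)=-\tfrac{1}{q}\,\mathrm{Res}(\log Q)$ quoted just below Theorem \ref{thm:zetaresM}. Your argument should therefore invoke \eqref{eq:PSclassicalsymb} (integrated over $M$) rather than \eqref{eq:PSclassicalop} as printed, or explicitly correct the sign in \eqref{eq:PSclassicalop}; as written, your last step does not actually yield \eqref{eq:PSclassicalopzeta}.

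Second, a smaller slip: you write that $A'(0)=-A\log_\beta Q$ holds ``up to the projection $\pi_Q$ onto $\ker Q$''. But $Q$ is a weight (Definition \ref{defn:weight}), hence invertible, so $\ker Q=\{0\}$ and no projection appears; the identity $A'(0)=-A\log_\beta Q$ holds exactly. Kernel projections only enter later, when one replaces a non-invertible admissible $\Delta$ by $\Delta+\pi_\Delta$.
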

	
	When $A$ is a differential operator, its   $Q$-weighted $\zeta$-regularised trace $\zeta(A,Q)( 0 )$ is therefore proportional to the extended Wodzicki residue ${\rm Res}(A\, \log Q)$. So in that case, the $\zeta$-regularised trace $\zeta(A,Q)( 0 )$ is local since the extended Wodzicki residue is local in so far as it is expressed as  an integral over $M$ of  the pointwise extended residue (Proposition \ref{prop:reslogxlocalexpression}),  which only depends on finitely many (here one) homogeneous components of the symbol of $A$.
	\\
	In particular, for $A=I$, we have as announced previously, that the {\bf logarithmic  residue} ${\rm Res}(\log Q)$ is well defined, moreover   the zeta function of $Q$ at zero is local    $$\zeta_Q(0):=\zeta(I,Q)(0)=-\frac{1}{q}{\rm Res}(\log Q).$$ 
	
	\section{Heat-kernel expansions revisited}
	\subsection{The heat-kernel expansion in terms of Wodzicki residues}
	
	We  recall  the definition and some properties of the Mellin transform following \cite{FGD} (see also \cite{Je}).
	
	\begin{defn}
		Let $f(t)$ be a locally Lebesgue integrable function over $]0,+\infty[$. The Mellin transform of $f(t)$ is defined as
		\[
		{\mathcal M}(f)(z):=\int_0^\infty f(t)t^{z-1}\,dt.
		\]
		The largest open strip $a<\Re(z)<b$ in which the integral converges is called the fundamental strip.
	\end{defn}
	We recall the following well-known  Inverse Mellin Mapping Theorem \cite[Theorem 4]{FGD}. 
	
	\begin{prop}\label{prop:InverseMellin}
		Let $f(t)$ be continuous in $]0,+\infty[$ with Mellin transform $\phi(z)$ having a fundamental strip $a<\Re(z)<b$.
		\begin{enumerate}
			\item Provided
			\begin{enumerate}
				\item  $\phi(z)$ admits a meromorphic continuation to the strip $(\gamma,b)$ for some $\gamma<a$ with a finite number of poles in the strip, and is analytic on $\Re(z)=\gamma$,
				\item there exists a real number $c$ in $(a,b)$ such that for some $r>1$
				$$ \phi(z)=O(|z|^{-r}),\  \text{ when } |z|\rightarrow\infty \text{ in } \gamma\leq \Re(z)\leq c,$$
				\item $\phi$ admits the singular expansion for $z\in (\gamma,a)$
				$$\phi(z)\cong \sum\limits_{i\geq 0}\sum\limits_{j\geq 0}a_{i,j}\frac{(-1)^{k_j}k_j!}{(z-d_j)^{k_i+1}},$$
			\end{enumerate}
			then $f$ {\bf admits an asymptotic expansion}  at $0$ given by
			$$f(t)=\sum\limits_{i\geq 0}\sum\limits_{j\geq 0} a_{i,j} t^{-d_j}\log^{k_i} t+O(t^{-\gamma}).$$
			\item Provided
			\begin{enumerate}
				\item   $\phi(z)$ admits a meromorphic continuation to the strip $(a,\gamma)$ for some $\gamma>b$ with a finite number of poles in the strip, and is analytic on $\Re(z)=\gamma$,
				\item there exists a real number $c$ in $(a,b)$ such that for some $r>1$
				$$ \phi(z)=O(|z|^{-r}),\  \text{ when } |z|\rightarrow\infty \text{ in } c\leq \Re(z)\leq \gamma,$$
				\item $\phi$ admits the singular expansion for $z\in (c,\gamma)$,
				$$\phi(z)\cong \sum\limits_{i\geq 0}\sum\limits_{j\geq 0}a_{i,j}\frac{(-1)^{k_i}k_i!}{(z-d_j)^{k_i+1}},$$
			\end{enumerate}
			then $f$  {\bf admits an asymptotic expansion}  at $\infty$ given by
			$$f(t)=\sum\limits_{i\geq 0}\sum\limits_{j\geq 0} a_{i,j} t^{-d_j}\log^{k_i} t+O(t^{-\gamma}).$$
		\end{enumerate}
	\end{prop}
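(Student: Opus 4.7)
The plan is to use the Mellin inversion formula together with a contour shift argument. Since $\phi$ converges in the fundamental strip $a<\Re(z)<b$, for any $c\in(a,b)$ Mellin inversion gives
\[
f(t)=\frac{1}{2\pi i}\int_{c-i\infty}^{c+i\infty}\phi(z)\,t^{-z}\,dz,
\]
so the problem reduces to moving the integration contour to a vertical line of the form $\Re(z)=\gamma$ and collecting the residues picked up in between.

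For case $(1)$, consider the rectangular contour $\mathcal{R}_T$ with vertical sides at $\Re(z)=\gamma$ and $\Re(z)=c$ and horizontal sides at $\Im(z)=\pm T$. The decay hypothesis $\phi(z)=O(|z|^{-r})$ with $r>1$, uniform in $\gamma\leq\Re(z)\leq c$, guarantees that for fixed $t>0$ the contribution of the horizontal sides tends to zero as $T\to\infty$ (one bounds $|t^{-z}|$ by $\max(t^{-\gamma},t^{-c})$ times $e^{-T|\log t|\cdot 0}$, which is uniform in $T$, while $|\phi(z)|\,dz$ integrates as $O(T^{1-r})$). Applying the residue theorem on $\mathcal{R}_T$ and letting $T\to\infty$ yields
\[
f(t)=\sum_{d_j\in(\gamma,c)}\mathrm{Res}_{z=d_j}\bigl(\phi(z)\,t^{-z}\bigr)+\frac{1}{2\pi i}\int_{\gamma-i\infty}^{\gamma+i\infty}\phi(z)\,t^{-z}\,dz.
\]

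Next, I would compute the residues using the local expansion
\[
t^{-z}=t^{-d_j}\sum_{k\geq 0}\frac{(-\log t)^{k}}{k!}(z-d_j)^{k},
\]
combined with the singular expansion $\phi(z)\cong\sum_{i,j}a_{i,j}\frac{(-1)^{k_i}k_i!}{(z-d_j)^{k_i+1}}$. Reading off the coefficient of $(z-d_j)^{-1}$ in the product produces exactly $a_{i,j}\,t^{-d_j}\log^{k_i}t$, giving the claimed singular part of the asymptotic expansion. Finally, the remainder integral on the line $\Re(z)=\gamma$ is bounded in absolute value by $t^{-\gamma}\cdot\frac{1}{2\pi}\int_{-\infty}^{\infty}|\phi(\gamma+is)|\,ds$, and this last integral is finite by the $O(|z|^{-r})$ bound with $r>1$, yielding the $O(t^{-\gamma})$ error term as $t\to 0$.

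Case $(2)$ is entirely symmetric: one shifts the contour to the right to $\Re(z)=\gamma>b$, the same residue computation collects the contributions from the poles $d_j\in(c,\gamma)$, and the resulting remainder integral is now $O(t^{-\gamma})$ as $t\to\infty$ because one factors out $t^{-\gamma}$ from $|t^{-z}|$ on the shifted line. The main technical point to be careful about is the uniformity in $T$ of the decay of $\phi$ needed to discard the horizontal segments in the contour shift; beyond that, the argument is bookkeeping of the Taylor expansion of $t^{-z}$ against the principal parts of $\phi$ at each pole.
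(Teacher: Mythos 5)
The paper does not prove this proposition; it quotes it directly from Flajolet--Gourdon--Dumas \cite[Theorem 4]{FGD} ("We recall the following well-known Inverse Mellin Mapping Theorem... "), so there is no in-paper argument to compare against. Your contour-shift-plus-residue argument is exactly the standard proof of the statement, and the main steps are correct: Mellin inversion is valid on the line $\Re(z)=c$ because the $O(|z|^{-r})$ bound with $r>1$ makes the inversion integral absolutely convergent and $f$ is continuous; the horizontal sides of $\mathcal{R}_T$ contribute $O(T^{-r})\to 0$; the residue at $d_j$ matches the principal part against the Taylor series of $t^{-z}$ and yields $a_{i,j}t^{-d_j}\log^{k_i}t$; and the remainder on $\Re(z)=\gamma$ is $O(t^{-\gamma})$.

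One place where "entirely symmetric" is too quick is Case~(2). When you shift the contour to the \emph{right} past the strip, the residue theorem gives
\[
\frac{1}{2\pi i}\int_{c-i\infty}^{c+i\infty}\phi(z)t^{-z}\,dz
=-\sum_{d_j\in(c,\gamma)}\mathrm{Res}_{z=d_j}\bigl(\phi(z)t^{-z}\bigr)
+\frac{1}{2\pi i}\int_{\gamma-i\infty}^{\gamma+i\infty}\phi(z)t^{-z}\,dz,
\]
i.e.\ the collected residues enter with an overall minus sign relative to Case~(1), because the boundary of the rectangle is now traversed with the line $\Re(z)=c$ on its \emph{left}. This sign is present in the FGD formulation (and, indeed, is dropped or absorbed into the convention for the singular expansion in the version reproduced here); your proof should make it explicit rather than assert symmetry. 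The other small point worth stating is that the $O(t^{-\gamma})$ remainder is to be read as $t\to 0^+$ in Case~(1) and $t\to\infty$ in Case~(2), which is exactly what makes the shifted line $\Re(z)=\gamma$ the correct one to retreat to in each case. With these two clarifications the argument is complete and is the intended one.
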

	
	\begin{rk}
		In particular, it follows from 2.c) that if $\phi$ is analytic  in some half plane $\Re(z)>c$ then $f(t)= O(t^{-\gamma})$ for any $\gamma>c$ and $f$ is a Schwartz function.
	\end{rk}Here is a useful example to keep in mind for what follows.
	\begin{ex} For any $\lambda>0$ the map  $z\longmapsto \phi(z)=\Gamma(z)\, \lambda^{-z}$ satisfies  the above assumptions as a result of the properties of the Gamma function.  Indeed, on the one hand it is meromorphic  on the complex plane with  simple poles in $\Z_{\leq 0}$. On the other hand, it follows from Stirling's formula (see  e.g. \cite[Proposition IV:1.14]{FB}) which expresses the Gamma function as $\Gamma(z)=\sqrt{2\pi}\, z^{z-\frac{1}{2}}\,e^{-z}\, e^{H(z)}$ for some function $H$  given by a series, that 
		for any $0<\gamma<\delta$ there is   a positive constant  $C_{\gamma, \delta}$ such that  for any  $$\gamma\leq  \Re(z) \leq \delta\Longrightarrow   \vert\Gamma(z)\vert \leq C_{\gamma, \delta} \,   \vert \Im(z) \vert^{\delta-\frac{1}{2}} \,e^{-\gamma}. $$      Since $\lambda^{-z}$ is bounded from above by $\lambda^{-\gamma}$ this shows that the Condition 1. b)  is satisfied for any   $r>1$. The inverse Mellin transform is $f(t)=e^{-t\lambda}$ which defines a Schwartz function on $\R_+$.
	\end{ex}
	Combining these properties of the Gamma function with the results of  Theorem \ref{thm:KVPSop} yields the following useful properties.
	\begin{cor}
		\label{cor:propertiesphiz}
		Let $A(z)\in \Psi(M,E)$ be a holomorphic family    of affine order $\alpha(z)=a-qz$ for some positive real number $q$;  we set
		$$\phi(z):=\Gamma(z)\, {\rm TR}\left(A (z)\right)\quad \alpha:={\rm Max} \left\{0;  \frac{a+n}{q}\right\}.$$
		\begin{enumerate}
			\item   The map $\phi$ is holomorphic on the half-plane $\Re(z)>\alpha$,   it is analytic on any imaginary line $\Re(z)=\gamma$ for $\gamma>\alpha$, and  $\left(\alpha,\infty\right)$  is the largest open strip where $\phi$ is defined. 
			\item $\phi$ admits a meromorphic extension to the whole complex plane with countably many simple poles   $\{d_j:=\frac{a+n -j}{q}, j\in \Z_{\geq 0}\}\cup \Z_{\leq 0}$
			\item  whenever   $z\mapsto  {\rm TR}\left(A (z)\right) $  is uniformly bounded    in   closed strips $\frac{a+n}{q}<\gamma\leq \Re(z)\leq \delta$,  then for some $r>1$ we have the following asymptotic behaviour of $\phi$ at infinity along imaginary lines
			$$ \phi(z)=O(|z|^{-r}),\  \text{ when } |z|\rightarrow\infty \text{ in }\frac{a+n}{q}< \gamma\leq \Re(z)\leq \delta,$$as  a consequence of the corresponding property of the Gamma function described above. 
			\item $\phi$ admits a singular expansion   on any horizontally bounded strip $$\phi(z)\cong \sum\limits_{j=0} \frac{a_j}{z-d_j},$$ as a consequence of the second item.
		\end{enumerate}
	\end{cor}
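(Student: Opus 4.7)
The plan is to address each of the four items in turn, using Theorem \ref{thm:KVPSop} for the canonical trace factor and classical facts about the Gamma function for the other factor.

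For item 1, Theorem \ref{thm:KVPSop} implies that ${\rm TR}(A(z))$ coincides with the ordinary trace (and is holomorphic) on $\Re(z) > (a+n)/q$, since the order $a - qz$ drops below $-n$ there. Combined with the holomorphy of $\Gamma$ on $\Re(z) > 0$, this gives holomorphy of $\phi$ on $\Re(z) > \alpha$, hence analyticity along every vertical line $\Re(z) = \gamma > \alpha$. Maximality of the strip is immediate: if $\alpha = (a+n)/q$ then the boundary carries the pole $d_0 = (a+n)/q$ of TR, while if $\alpha = 0$ then the boundary carries the pole of $\Gamma$ at zero.

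Item 2 asks for the meromorphic extension. Theorem \ref{thm:KVPSop}(2) extends ${\rm TR}(A(z))$ meromorphically to $\C$ with simple poles at $\{d_j\}_{j \geq 0}$, and $\Gamma$ is meromorphic on $\C$ with simple poles at $\Z_{\leq 0}$. The product $\phi$ is therefore meromorphic, with pole set contained in $\{d_j\} \cup \Z_{\leq 0}$, each pole being simple away from the overlap. When some $d_j$ coincides with a non-positive integer, one must inspect both Laurent expansions: using Theorem \ref{thm:KVPSop}(3), one checks that the relevant residue of TR vanishes (for instance, when $A(d_j)$ is differential, as happens for all sufficiently large $j$), so the pole of the product remains simple. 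This overlap analysis is the main technical point of the proof.

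For item 3, the Stirling asymptotics recalled in the example just above the corollary imply that $|\Gamma(z)|$ decays exponentially in $|\Im z|$ on any closed vertical strip $\gamma \leq \Re(z) \leq \delta$ disjoint from $\Z_{\leq 0}$, dominating any polynomial growth. Under the assumed uniform bound on ${\rm TR}(A(z))$ in such a strip, $\phi(z)$ therefore decays faster than any polynomial in $|z|$ as $|\Im z| \to \infty$, which in particular yields the required $O(|z|^{-r})$ bound for every $r > 1$.

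Finally, item 4 is an immediate consequence of item 2. On any horizontally bounded strip only finitely many of the simple poles $\{d_j\} \cup \Z_{\leq 0}$ lie inside, and the principal part of $\phi$ at each such pole $d_j$ is $a_j/(z - d_j)$ with $a_j = {\rm Res}_{z = d_j} \phi(z)$. Summing these principal parts gives the singular expansion stated, in the sense of \cite{FGD}.
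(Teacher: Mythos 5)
Your proposal fills in the details the paper leaves implicit---the corollary is stated with only the remark that it follows by combining the properties of the Gamma function with Theorem~\ref{thm:KVPSop}---and for items~1, 3 and~4 your reasoning matches the intended argument. One small caveat on item~3: the Stirling-type bound quoted in the example preceding the corollary lacks the exponential factor in $|\Im z|$ and as written does not decay at all; what you actually invoke, the exponential decay of $|\Gamma(z)|$ along vertical strips disjoint from $\Z_{\le 0}$, is the correct statement and the argument goes through with it.

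The weak point is the overlap analysis in item~2. You rightly observe that if some $d_j$ lies in $\Z_{\le 0}$, the product $\Gamma(z)\,{\rm TR}(A(z))$ could have a double pole there, and that simplicity forces ${\rm Res}_{z=d_j}{\rm TR}(A(z))=\tfrac{1}{q}{\rm Res}(A(d_j))$ to vanish. But the justification you offer---``when $A(d_j)$ is differential, as happens for all sufficiently large $j$''---is not valid for a general holomorphic family: for $d_j\le 0$ the operator $A(d_j)$ has order $a-q\,d_j=j-n\ge 0$, so it is a classical operator of nonnegative integer order which has no reason to be differential, and its Wodzicki residue need not vanish. So you have not ruled out double poles, and indeed the corollary as stated glosses over the same point; the assertion of simple poles is only correct under an implicit hypothesis (e.g.\ ${\rm Res}(A(d_j))=0$ whenever $d_j\in\Z_{\le 0}$). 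In the applications the paper actually makes, $A(z)=A\,Q^{-z}$ with $A$ and $Q$ differential, so at an overlap $d_j=-m$ one has $A(d_j)=A\,Q^{m}$, which is again differential and the residue vanishes; you should either restrict to that setting or state the needed hypothesis explicitly rather than claiming it holds automatically for large $j$.
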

	
	Let as before, $M$ be an $n$-dimensional closed manifold and $E$ be a finite rank vector bundle over $M$.  The subsequent theorem   follows from Proposition \ref{prop:InverseMellin} applied to $\phi(z)= \Gamma(z)\, {\rm TR}\left(A (z)\right) $.
	\begin{thm}
		\label{thm:mainthm} 
		Let  ${\mathcal A}\colon z\mapsto A(z)\in \Psi(M,E)$ be some   holomorphic family  of operators   of affine order $\alpha(z)=a-qz$ with $q$  some positive real number corresponding to the Mellin transform of some  analytic family $\widetilde A(t), t>0 $  of trace-class operators in $ \Psi(M,E)$. Whenever   $z\mapsto  {\rm TR}\left(A (z)\right) $  is uniformly bounded    on   closed strips $\frac{a+n}{q}<\gamma\leq \Re(z)\leq \delta$, then $f_{\mathcal A}(t)= {\rm Tr}\left( \widetilde A(t)\right)$ admits an asymptotic expansion    at $0$ given by 
				\begin{equation}\label{eq:fAt} f_{\mathcal A}(t)\sim_0  \,\frac{1}{q}\sum\limits_{j\geq 0} a_j t^{\frac{-a-n +j}{q}},\end{equation} with
		\begin{equation}\label{eq:aj}
		a_j= -\frac{1}{q} {\rm Res}\left( A(d_j)\right) \quad\text{for}\quad j>a+n.
		\end{equation} 
		If $A(0)$ differs from a differential operator by a trace-class pseudodifferential operator $T$, the constant term in the expansion (\ref{eq:fAt}) which coincides with the constant term in the Laurent expansion of ${\rm TR}(A(z))$ reads
		\begin{equation}\label{eq:aj1}
		a_j= {\rm Tr}(T)-\frac{1}{q} {\rm Res}\left(A^\prime(0)\right), \quad\text{for}\quad  j=a+n .
		\end{equation} 
	\end{thm}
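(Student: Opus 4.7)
The plan is to apply the Inverse Mellin Mapping Theorem (Proposition \ref{prop:InverseMellin}, part (1)) to the meromorphic function
$$\phi(z) := \Gamma(z)\,{\rm TR}(A(z)),$$
which, under the hypothesis that $z \mapsto A(z)$ arises from the Mellin transform of $t \mapsto \widetilde A(t)$ with $\widetilde A(t)$ trace-class, equals ${\mathcal M}(f_{\mathcal A})(z)$ on a suitable right half-plane. The asymptotic expansion of $f_{\mathcal A}$ at $0$ is then read off from the Laurent expansion of $\phi$ at each of its poles.

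First, I would verify the three hypotheses of Proposition \ref{prop:InverseMellin}(1) by invoking Corollary \ref{cor:propertiesphiz}. The meromorphic extension of $\phi$ to all of $\C$, with simple poles located in the union $\{d_j = (a+n-j)/q,\, j \in \Z_{\geq 0}\} \cup \Z_{\leq 0}$, is immediate from Theorem \ref{thm:KVPSop}(1) combined with the poles of $\Gamma$. The polynomial decay $\phi(z) = O(|z|^{-r})$ along closed vertical strips avoiding these poles combines Stirling's asymptotics for $\Gamma$ (which yields super-polynomial decay in $|{\rm Im}(z)|$) with the assumed uniform boundedness of ${\rm TR}(A(z))$ on the strip $\tfrac{a+n}{q} < \gamma \leq \Re(z) \leq \delta$. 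The singular expansion required for hypothesis (c) is then obtained once the residues of $\phi$ are computed.

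Next, I would identify the coefficients. By Proposition \ref{prop:InverseMellin}(1), each simple pole of $\phi$ at $d_j$ contributes ${\rm Res}_{z=d_j}(\phi) \cdot t^{-d_j}$ to the asymptotic expansion of $f_{\mathcal A}(t)$ at zero. For generic indices, Theorem \ref{thm:KVPSop}(2) gives ${\rm Res}_{z=d_j}\,{\rm TR}(A(z)) = \tfrac{1}{q}\,{\rm Res}(A(d_j))$, which after multiplication by the regular factor $\Gamma(d_j)$ produces the Wodzicki-type coefficient \eqref{eq:aj} up to the normalisation $\tfrac{1}{q}$ factored out in the stated expansion. The constant term corresponds to $d_j = 0$, i.e.\ $j = a+n$: the hypothesis that $A(0)$ differs from a differential operator by a trace-class $T$ forces ${\rm Res}(A(0)) = 0$, so ${\rm TR}(A(z))$ is regular at $z = 0$, and the simple pole of $\phi$ there comes solely from $\Gamma$. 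Its residue is therefore ${\rm TR}(A(0))$, which by Theorem \ref{thm:KVPSop}(3) equals ${\rm Tr}(T) - \tfrac{1}{q}\,{\rm Res}(A'(0))$, yielding \eqref{eq:aj1}.

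I expect the main technical obstacle to be verifying hypothesis (b) of Proposition \ref{prop:InverseMellin} rigorously: one must combine Stirling's formula for $\Gamma$ with the assumed boundedness of ${\rm TR}(A(z))$ while choosing vertical strips that avoid the poles of $\Gamma$ accumulating on the negative real axis. A secondary delicate point is to rule out double poles of $\phi$ at indices where $\Gamma$ and ${\rm TR}(A(z))$ could be singular simultaneously, since such coincidences would introduce $\log t$ factors in the expansion; under the standing hypotheses this is controlled by the vanishing of the Wodzicki residue on differential and non-integer-order operators, which in particular applies at $d_j = 0$ by the assumption on $A(0)$.
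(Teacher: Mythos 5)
Your proposal follows precisely the route the paper itself indicates: the paper states that Theorem~\ref{thm:mainthm} ``follows from Proposition~\ref{prop:InverseMellin} applied to $\phi(z)=\Gamma(z)\,{\rm TR}(A(z))$,'' with the hypotheses checked via Corollary~\ref{cor:propertiesphiz} and the residues of $\phi$ computed from Theorem~\ref{thm:KVPSop}(2)--(3), and you do exactly this. One small notational slip: the residue of $\phi$ at $z=0$ is $\lim_{z\to 0}{\rm TR}(A(z))={\rm fp}_{z=0}{\rm TR}(A(z))$ rather than the canonical trace ${\rm TR}(A(0))={\rm Tr}(T)$ of the operator $A(0)$ (they differ by $-\tfrac{1}{q}{\rm Res}(A'(0))$, which is precisely the content of Theorem~\ref{thm:KVPSop}(3)), but since you immediately invoke that formula you land on the correct value, so the argument is sound.
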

	Let $Q\in \Psi(M,E)$ be a weight of positive order $q$. We further assume that it is  ``close" to a positive operator i.e.,  its spectrum is concentrated in a cone centered around the positive real line; in particular, it has  spectral cut $\beta=\pi$. 
	
	The holomorphic family ${\mathcal A}: z\mapsto A(z)= A\, Q^{-z}$ is the Mellin transform of the analytic family  $A\,e^{- tQ}, t>0$. Since the map $z\mapsto  {\rm TR}\left(A (z)\right) $  is uniformly bounded    on   closed strips $\frac{a+n}{q}<\gamma\leq \Re(z)\leq \delta$ we can apply Theorem \ref{thm:mainthm}, which yields the following corollary.
	\begin{cor}\label{cor:invMellin}
		The inverse Mellin transform
		$$f_{\mathcal A}(t):={\rm Tr}\left(A e^{-tQ }\right)$$
		of $\phi_{\mathcal A}(z):=\Gamma(z) {\rm TR}\left(A Q^{-z}\right)$ is a Schwartz function on $]0,+\infty[$, which  admits an asymptotic expansion  at $0$ given by
		\begin{equation}\label{eq:ft}
		f_{\mathcal A}(t)\sim_0 \,t^{-\frac{a+n}{q}} \sum \limits_{j\geq 0} a_{j} t^{ \frac{ j}{q}},
		\end{equation}  where
		\begin{eqnarray}\label{eq:aj2}
			a_j &=& -\frac{1}{q} {\rm Res}\left( AQ^{\frac{-a-n+j}{q}}\right) \quad\text{for}\quad j<a+n\nonumber\\
			{\rm and }\quad  a_{a+n}&=&\zeta(A,Q)(0)= {\rm Tr}(T)-\frac{1}{q}{\rm Res}\left(A\, \log Q\right) 
			\end{eqnarray} 
		if 
		  $A\in \Psi(M,E)$ differs from a differential operator by a trace-class operator $T$.
	\end{cor}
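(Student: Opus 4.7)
The plan is to recognize this corollary as a direct application of Theorem \ref{thm:mainthm} to the holomorphic family $\mathcal A\colon z \mapsto A(z) = A Q^{-z}$, followed by identification of the coefficients via Theorems \ref{thm:KVPSop} and \ref{thm:zetaresM}.

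First I would verify that $A(z) = A Q^{-z}$ is the Mellin transform of $\widetilde{A}(t) = A e^{-tQ}$. Since $Q$ is a weight of positive order $q$ with spectral cut $\pi$ whose spectrum lies in a cone around the positive real axis, combining the Cauchy integral representation \eqref{eq.cxp} with the elementary identity $\Gamma(z)\lambda^{-z} = \int_0^\infty t^{z-1} e^{-t\lambda}\,dt$ (valid for $\Re(z), \Re(\lambda) > 0$) yields, on the half-plane $\Re(z) > (a+n)/q$ where $A Q^{-z}$ is trace class, the operator-level identity
$$\Gamma(z)\, Q^{-z} = \int_0^\infty t^{z-1} e^{-tQ}\,dt.$$
Taking $L^2$-traces (which agree with $\TR$ on trace class operators) then gives $\phi_{\mathcal A}(z) = \Gamma(z)\,\TR(A Q^{-z}) = \mathcal M(f_{\mathcal A})(z)$, exhibiting $\phi_{\mathcal A}$ as the Mellin transform of $f_{\mathcal A}(t) = \Tr(A e^{-tQ})$.

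Next I would check the uniform boundedness hypothesis of Theorem \ref{thm:mainthm}: that $z \mapsto \TR(A Q^{-z})$ is bounded on closed vertical strips $\gamma \le \Re(z) \le \delta$ with $\gamma > (a+n)/q$. This relies on Seeley-type symbol estimates showing that the trace norm of $A Q^{-z}$ grows at most polynomially in $|\Im(z)|$ on such strips, controlled against the Gamma factor. The meromorphic structure of $\TR(A Q^{-z})$ with simple poles at $d_j = (a+n-j)/q$ is already supplied by Theorem \ref{thm:KVPSop}. Applying Theorem \ref{thm:mainthm} then produces the expansion \eqref{eq:ft}: for $j < a+n$, equation \eqref{eq:aj} gives $a_j = -\frac{1}{q}\,\Res(A(d_j)) = -\frac{1}{q}\,\Res(A Q^{(-a-n+j)/q})$, and for $j = a+n$ (so $d_j = 0$), the hypothesis that $A$ differs from a differential operator by a trace class $T$ places us under \eqref{eq:aj1}, with the identification $a_{a+n} = \zeta(A,Q)(0) = \Tr(T) - \frac{1}{q}\,\Res(A \log Q)$ supplied directly by Theorem \ref{thm:zetaresM}.

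Finally, the Schwartz property of $f_{\mathcal A}$ at infinity follows because $Q$ is invertible, so its spectrum is bounded away from zero and $\Tr(A e^{-tQ})$ decays exponentially as $t \to \infty$; equivalently $\phi_{\mathcal A}$ is analytic on the half-plane $\Re(z) > (a+n)/q$, and the Remark following Proposition \ref{prop:InverseMellin} applied to the expansion at infinity confirms rapid decay. The main technical obstacle is verifying the uniform bound on $\TR(A Q^{-z})$ along vertical strips, which requires careful symbol estimates for $Q^{-z}$ as $|\Im(z)| \to \infty$; the remainder of the argument amounts to bookkeeping of previously established theorems.
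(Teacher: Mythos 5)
Your proposal is correct and takes essentially the same route as the paper: the paper also presents this corollary as a direct application of Theorem \ref{thm:mainthm} to the family $A(z)=AQ^{-z}$, after observing that $\Gamma(z)\,{\rm TR}(AQ^{-z})$ is the Mellin transform of ${\rm Tr}(Ae^{-tQ})$ and that $z\mapsto {\rm TR}(AQ^{-z})$ is uniformly bounded on closed vertical strips to the right of the abscissa $\frac{a+n}{q}$, and then reads off the coefficients via \eqref{eq:aj}, \eqref{eq:aj1} and Theorem \ref{thm:zetaresM}.
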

	Let  $\Delta\in \Theta(M,E)$  be  an admissible operator and 
		let $E$ be equipped with a hermitian metric, which combined with a Riemannian metric on $M$ induces a (weak) inner product 
		on $\Ci(M,E)$.  We previously saw that the operator $ Q:= \Delta +\pi_\Delta$   defines a weight.
			
	{\bf Notation convention:} 
	Integrating over $M$ the pointwise residue in (\ref{eq:resxlogQ}) gives rise to    the {\bf   logarithmic residue of $\Delta$} defined as
	\begin{eqnarray}\label{eq:reslogARx}
	{\rm Res}(\log \Delta):={\rm Res}(\log (\Delta+\pi_\Delta)). 
	\end{eqnarray} Similarly,  for any $x\in M$ and $\alpha\in \R$ we set
		$${\rm res}_x(\Delta^\alpha):= {\rm res}_x\left(\left(\Delta+\pi_\Delta\right)^\alpha\right);\quad \Res(\phi\,\Delta^\alpha):= \Res\left(\phi\, \left( \Delta+\pi_\Delta\right)^\alpha \right).$$
	We now specialise to the multiplication operator $A=\phi\in C^\infty(M)$ and apply Theorem \ref{thm:mainthm} to   the holomorphic family $A(z)= \phi\, ( \Delta+\pi_\Delta)^{-z}$. 
	\begin{thm}\label{thm:an}  
		For any smooth function $\phi\in \Ci(M)$ we have
		\begin{eqnarray}
		\label{eq:HKresphi}   
		&& {\rm Tr}\left(\phi\, e^{-t\,\Delta}\right) \sim_{t\to 0}\\
		&&\quad\quad-\frac{(4\pi )^{\frac{n}{2}}}{2 }\,\Bigg[{\rm Res} \left(\phi\,\log \Delta\right) \, \delta_{\frac{n}{2}-\left[\frac{n}{2}\right]} \nonumber \\
		&&\quad\quad+ \sum_{k\in\left[0,\frac{n}{2}\right[\cap \Z} \Gamma\left( \frac{n}{2}-k\right)  \,{\rm Res} \left(\phi\,  \Delta^{k-\frac{n}{2}}\right)\, t^{k-\frac{n}{2}}\Bigg]\nonumber.
		\end{eqnarray}
		The local heat-kernel trace  $K_t( \Delta )(x,x)$ of the operator $\Delta$ at the point $x$ is defined by
		$${\rm Tr}\left(\phi\, e^{-t\,\Delta}\right)=\int_M\phi(x)K_t\left(\Delta\right)(x,x)\, \sqrt{{\rm det} g}(x)\, dx\quad\forall \phi\in \Ci(M)$$
		and therefore it has the following asymptotic expansion  
		\begin{eqnarray}
		\label{eq:HKres}   
		&& K_t(\Delta)(x,x) \sim_{t\to 0}\\
		&&-\frac{(4\pi )^{\frac{n}{2}}}{2\, \sqrt{{\rm det} g}(x) }\,\Bigg[{\rm res}_x\left(\log \Delta\right) \, \delta_{\frac{n}{2}-\left[\frac{n}{2}\right]} \nonumber \\
&&		+ \sum_{k\in\left[0,\frac{n}{2}\right[\cap \Z} \Gamma\left( \frac{n}{2}-k\right)  \,{\rm res}_x \left( \Delta^{k-\frac{n}{2}}\right)\, t^{k-\frac{n}{2}}\Bigg]\nonumber.
		\end{eqnarray}
	\end{thm}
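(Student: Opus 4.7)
The plan is to apply Corollary \ref{cor:invMellin} to the holomorphic family $z \mapsto A(z) := \phi\,(\Delta+\pi_\Delta)^{-z}$ obtained by taking $A=\phi$ (multiplication by the smooth function $\phi$, which is a differential operator of order $a=0$) and the weight $Q := \Delta + \pi_\Delta$ of order $q$ equal to that of $\Delta$. Because $\phi$ is itself a differential operator, the trace-class correction $T$ in Corollary \ref{cor:invMellin} vanishes, and because $\Delta$ is essentially nonnegative and selfadjoint, $Q$ has a principal angle close to $\pi$, so all hypotheses needed for $Q^{-z}$ and $\log Q$ are in place.

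Invoking Corollary \ref{cor:invMellin}, the function $t\mapsto {\rm Tr}(\phi\,e^{-tQ})$ is the inverse Mellin transform of $z\mapsto \Gamma(z)\,{\rm TR}(\phi\,Q^{-z})=\Gamma(z)\,\zeta(\phi,Q)(z)$, and its asymptotic expansion at $t=0$ is read off from the singular expansion of $\Gamma(z)\,\zeta(\phi,Q)(z)$ at its poles. The poles $d_j=(n-j)/q$ of $\zeta(\phi,Q)$ (specialised to $q=2$, so $d_j=n/2-k$ with $k=j/2$) are simple at the noninteger locations $d_j>0$, contributing coefficients of $t^{k-n/2}$ expressible in terms of ${\rm Res}(\phi\,Q^{k-n/2})$, multiplied by the corresponding value $\Gamma(n/2-k)$. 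Whenever $d_j$ lands at an integer $\leq 0$ — which for $q=2$ occurs precisely when $n$ is even (hence the factor $\delta_{n/2-[n/2]}$) — a pole of $\Gamma$ collides with a pole of $\zeta$, producing a double pole whose constant contribution is $\zeta(\phi,Q)(0)=-\tfrac{1}{q}{\rm Res}(\phi\,\log Q)$ by Theorem \ref{thm:zetaresM}. Collecting these contributions and tracking the normalizations from $\Gamma$ at the positive poles together with the Wodzicki residue conventions gives the asymptotic formula with the stated $\tfrac{(4\pi)^{n/2}}{2}$ and $\Gamma(n/2-k)$ factors.

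To conclude \eqref{eq:HKresphi}, one replaces $Q=\Delta+\pi_\Delta$ by $\Delta$: the operator $\pi_\Delta$ is finite-rank smoothing, so by Duhamel's formula the heat-operator trace asymptotics of ${\rm Tr}(\phi\,e^{-tQ})$ and ${\rm Tr}(\phi\,e^{-t\Delta})$ coincide termwise (this is the footnoted remark in the introduction), and the notational conventions ${\rm Res}(\phi\,\Delta^\alpha):={\rm Res}(\phi\,Q^\alpha)$, ${\rm Res}(\phi\,\log\Delta):={\rm Res}(\phi\,\log Q)$ translate the expansion into the form claimed. The pointwise heat-kernel expansion \eqref{eq:HKres} then follows because the left-hand side of \eqref{eq:HKresphi} is $\int_M \phi(x)\,K_t(\Delta)(x,x)\,\sqrt{\det g}(x)\,dx$, while by Proposition \ref{prop:reslogxlocalexpression} each of the Wodzicki residues ${\rm Res}(\phi\,\log\Delta)$ and ${\rm Res}(\phi\,\Delta^{k-n/2})$ is also the integral of a pointwise density weighted by $\phi(x)$; since \eqref{eq:HKresphi} holds for every $\phi\in C^\infty(M)$, one localises by varying $\phi$ to arrive at the pointwise identity.

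The main obstacle is the passage from the integrated asymptotic expansion to the pointwise one: it requires that the remainder terms in \eqref{eq:HKresphi} be controlled uniformly in $x$ and $\phi$ so that the localisation can be carried out termwise. The other delicate point is careful bookkeeping at the poles where $\Gamma$ and $\zeta(\phi,Q)$ both contribute — distinguishing the simple-pole case (purely singular $t^{k-n/2}$ terms) from the double-pole case (the $\log$-residue constant term), which is exactly what the factor $\delta_{n/2-[n/2]}$ keeps track of.
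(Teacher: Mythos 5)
Your proof matches the paper's: both rely on Corollary~\ref{cor:invMellin} applied to $z\mapsto\phi\,(\Delta+\pi_\Delta)^{-z}$ with $q=2$, use Theorem~\ref{thm:zetaresM} to identify the constant term $\zeta(\phi,\Delta+\pi_\Delta)(0)=-\tfrac{1}{q}{\rm Res}(\phi\log(\Delta+\pi_\Delta))$, replace $\Delta+\pi_\Delta$ by $\Delta$ via smoothing-perturbation invariance of the small-time asymptotics, and pass from the integrated expansion \eqref{eq:HKresphi} to the pointwise one \eqref{eq:HKres} by letting $\phi$ vary.

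One conceptual slip is worth correcting. You describe the constant term as arising from ``a pole of $\Gamma$ collid[ing] with a pole of $\zeta$, producing a double pole''. In fact $\zeta(\phi,Q)$ has \emph{no} pole at $z=0$: Theorem~\ref{thm:zetaresM} states it is holomorphic there, precisely because $\phi$ is a differential operator and so ${\rm Res}_{z=0}\,\zeta(\phi,Q)=\tfrac1q{\rm Res}(\phi)=0$. Hence $\Gamma(z)\,\zeta(\phi,Q)(z)$ has only a \emph{simple} pole at $z=0$, with residue $\bigl({\rm Res}_{z=0}\Gamma\bigr)\cdot\zeta(\phi,Q)(0)=\zeta(\phi,Q)(0)$, which is why \eqref{eq:HKresphi} has a genuine constant term and no $t^{0}\log t$ term; a true double pole would, via Proposition~\ref{prop:InverseMellin}, force a logarithmic term into the expansion and contradict the statement. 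A secondary point: the factorisation ${\rm res}_x(\phi\,B)=\phi(x)\,{\rm res}_x(B)$ needed for the final localisation step comes from the $\xi$-independence of the symbol of the multiplication operator $\phi$ (only the $\gamma=0$ term survives in the product formula for $\sigma_{-n}(\phi B)$), not from Proposition~\ref{prop:reslogxlocalexpression}, which concerns algebraic dependence on jets rather than the $\phi$-factorisation.
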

	
	\begin{rk}
		Formula (\ref{eq:HKresphi}) compares with known formulae for the spectral action (take $f(\lambda)= e^{-t\lambda}$ in \cite[Formula 1.5]{CC}), in which the non-constant coefficients arise as Dixmier traces. 
	\end{rk} 
	\begin{proof}  It follows from  (\ref{eq:PSclassicalopzeta}) that the constant term in the heat-kernel expansion reads
		$${\rm fp}_{t=0} {\rm Tr}\left(\phi\, e^{-t\left(\Delta+\pi_\Delta \right)}\right)=\zeta\left(\phi, \Delta+\pi_\Delta\right)(0)= \frac{1}{2}\,{\rm Res} \left(\phi\,\log \Delta\right).$$ 
		Formula (\ref{eq:HKresphi}) then follows from Corollary \ref{cor:invMellin} with $q=2$ (cf. footnote in the introduction). 
		
		Since this holds for any smooth function $\phi$, formula (\ref{eq:HKres}) follows.
	\end{proof}
	
	\begin{rk}
		Combining (\ref{eq:reslogARx}) with (\ref{eq:resxlogQ})  applied to $\Delta$   
		$$
		{\rm res}_x(\log \Delta)=  \frac{1}{2\pi i } \int_{\vert \xi\vert_x=1}\, \int_\Gamma \log \lambda\, \sigma_{-q-n}( \Delta+\pi_\Delta-\lambda)^{-1 }(x,\xi)\, d\lambda \, \dbar_S\xi,  
		$$
		yields
		$$ a_{\frac{n}{2}}(x)= -\frac{  1}{4i \,\pi^{\frac{n}{2}+1} \, \sqrt{{\rm det} g}(x) } \int_{\vert \xi\vert_x=1}\, \int_\Gamma \log \lambda\, \sigma_{-q-n}( \Delta+\pi_\Delta-\lambda)^{-1 }(x,\xi)\, d\lambda \, d_S\xi.$$ 
		This compares with  similar formulae in the literature as for example \cite[(2.11)]{BFKM} 
		$$ a_{\frac{n}{2}}(x)= \frac{  1}{ 2} \int_{\vert \xi\vert_x=1}\, \int_0^\infty  \sigma_{-q-n}( \Delta+\pi_\Delta-\lambda)^{-1 }(x,\xi)\, d\lambda \, \dbar_S\xi.$$  
	\end{rk}
	
	\subsection[The case of geometric operators]{The case of geometric operators}
	\label{ssec:geom}
	We now single out a class of differential operators we call {\it geometric differential operators} (also considered in \cite{MP}) i.e.,  differential operators $A=\sum_{\vert \alpha\vert \leq d}a_\alpha(x)D_x^\alpha\in \Theta(M,E)$  where $d$ is the order of the operator and whose coefficients $a_\alpha(x)$  are given by an algebraic expression in terms of the jets at the point $x$ of the metric  on $M$ and a connection on $E$. 
	
	The Laplace-Beltrami operator $ \Delta_g$ on a closed Riemannian manifold $(M,g)$ is a geometric differential operator of order $2$. Another example is the Bochner-Laplacian   $$\Delta^\nabla:=  {\rm Tr}\left(\nabla^{T^*M\otimes E}\circ \nabla^E  \right)\in \Psi(M,E)$$   built from a connection $\nabla^E$ on $E$ and the induced connection  $\nabla^{T^*M\otimes E}$ on the tensor product $T^*M\otimes E$. Here the trace is taken over the two factors of $T^*M$. The square of a Dirac operator on a  spin manifold $(M,g)$, which differs from the corresponding Bochner-Laplacian by a term proportional to the scalar curvature, is geometric. More generally, the square $A=D^2$ of  a Dirac-type operator $D=\sum_{i=1}^n c(e_i)\nabla^E_i$ is a geometric operator; here $\nabla^E$ is a Clifford connection on a Clifford bundle $E$ over $M$, $c$ the Clifford multiplication and $e_i,i=1,\cdots, n$ an orthonormal frame of the cotangent bundle at a point
	$x$. All these examples fall in the class of  Laplace-type differential operators considered previously. 
	\begin{prop} \label{prop:resAlogQgeom}
		Let $A$ and $\Delta$ be two geometric differential operators in  $\Theta(M,E) $ and let $\Delta$ be admissible.   The extended  local residue ${\rm res}_x(A\, \log \Delta)$ at a point $x$  and   for any real number $\alpha$, the local residue ${\rm res}_x \left( A  \, \Delta^\alpha\right)$,  
		are algebraic expressions of the jets of the metric and the connection. 
	\end{prop}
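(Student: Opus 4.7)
The strategy is to reduce the statement to Proposition \ref{prop:reslogxlocalexpression} by showing that, at the symbolic level, the pointwise residue densities ${\rm res}_x(A\log\Delta)$ and ${\rm res}_x(A\Delta^\alpha)$ depend algebraically on only finitely many jets of the coefficients of $A$ and $\Delta$. Geometricity of $A$ and $\Delta$ then forces these coefficients to be algebraic expressions of the jets of the metric on $M$ and of the connection on $E$, and composing two algebraic expressions yields an algebraic expression.

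For the logarithmic residue ${\rm res}_x(A\log\Delta)$, the second assertion of Proposition \ref{prop:reslogxlocalexpression} already does the work: since $A$ and $\Delta$ are both differential, ${\rm res}_x(A\log\Delta)$ is an algebraic expression in the coefficients $a_\alpha(x)$ of $A$ and the $x$-jets of the coefficients $b_\beta(x)$ of $\Delta$. By the defining property of geometric differential operators these coefficients are in turn algebraic expressions in the jets of $g$ and of the connection, so one merely composes.

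For ${\rm res}_x(A\Delta^\alpha)$, I would argue analogously by producing the needed algebraic structure for the symbol of $\Delta^\alpha$. Using Corollary \ref{cor:reslogxlocal}-type invariance (smoothing perturbations do not change the residue), one may replace $\Delta$ by the weight $\Delta+\pi_\Delta$ and use the Cauchy representation
\begin{equation*}
\Delta^\alpha \;=\; \frac{i}{2\pi}\int_\Gamma \lambda^\alpha\,(\Delta+\pi_\Delta-\lambda)^{-1}\,d\lambda.
\end{equation*}
The standard parametrix construction gives each homogeneous component $\sigma_{-q-k}((\Delta-\lambda)^{-1})(x,\xi)$ as a polynomial in the components $\sigma_{q-j}(\Delta)(x,\xi)$ and their $(x,\xi)$-derivatives, divided by a positive power of $\sigma_q(\Delta)(x,\xi)-\lambda$. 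Integrating against $\lambda^\alpha$ around $\Gamma$ yields an algebraic expression in the jets of $\sigma(\Delta)$; for a differential $\Delta$ this is in turn algebraic in the $x$-jets of the $b_\beta$. Then the classical symbol-composition formula
\begin{equation*}
\sigma_{-n}(A\,\Delta^\alpha)(x,\xi) \;=\; \sum_{|\gamma|+j+k=a-q\alpha+n}\frac{(-i)^{|\gamma|}}{\gamma!}\,\partial_\xi^\gamma\sigma_{a-j}(A)\,\partial_x^\gamma\sigma_{q\alpha-k}(\Delta^\alpha),
\end{equation*}
followed by integration over the unit cotangent sphere, is a finite algebraic combination, and geometricity of $A$ and $\Delta$ closes the argument.

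The only real obstacle is the $\Delta^\alpha$ case, and the crux there is verifying that the contour integral in the Cauchy formula preserves the \emph{algebraic} (rather than merely smooth) dependence on the jets of $\sigma(\Delta)$. The parametrix construction makes the integrand a rational function of $\lambda$ with poles only at the eigenvalues of $\sigma_q(\Delta)(x,\xi)$, so the residue theorem applied to $\Gamma$ produces a finite sum whose terms are algebraic in those eigenvalues and in the jets of $\sigma(\Delta)$. For non-integer $\alpha$ some care with branches of $\lambda^\alpha$ is needed, but admissibility of $\Delta$ supplies a spectral cut and hence a well-defined branch; the resulting expression remains algebraic in the data listed above. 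This is essentially the $\alpha$-analogue of the computation carried out in the proof of Proposition \ref{prop:reslogxlocalexpression} for $\log\Delta$.
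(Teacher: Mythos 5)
Your proof is correct and follows essentially the same route as the paper: the logarithmic case is reduced directly to Proposition \ref{prop:reslogxlocalexpression} together with the defining property of geometric operators, and the $\Delta^\alpha$ case is handled by the analogous Cauchy-formula argument (the paper merely says ``can be shown similarly''; you spell it out). One small slip: in your composition formula the index constraint should read $\vert\gamma\vert+j+k=a+q\alpha+n$, not $a-q\alpha+n$, since $\sigma_{q\alpha-k}(\Delta^\alpha)$ has homogeneity $q\alpha-k$; this does not affect the argument.
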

	
	\begin{proof} 
		That ${\rm res}_x(A\, \log \Delta)$ is an algebraic expression in the $x$-jets of a finite number of homogeneous components of the symbols follows from Proposition \ref{prop:reslogxlocalexpression}, which tells us that the extended local residue is an algebraic expression in the $x$-jets of a finite number of homogeneous components of the symbols of $A$ and $\Delta$ combined with the fact that these  homogeneous components are themselves algebraic expressions in the jets of the metric and the connection.\\
		That ${\rm res}_x(A\,  \Delta^\alpha)$ is an algebraic expression in the $x$-jets of a finite number of homogeneous components of the symbols  can be shown similarly.
	\end{proof}
	
	The case of the Laplace-Beltrami  operator $\Delta_g$    on an   $n$-dimensional Riemannian manifold $(M,g)$ is particularly relevant for us since it enables to capture the scalar curvature  as a Wodzicki (possibly extended, depending on the dimension) residue.  
	Indeed,  the scalar curvature $\mathfrak s_g$ is proportional to the coefficient  $b_1(x)$  in the heat-expansion  
	\begin{eqnarray}\label{eq:Deltat}
	 	K_t \left(\Delta_g \right)(x,x)&\sim_{t\to 0}& t^{-n\frac{ n}{2} } \sum \limits_{j\geq 0} a_{k}(x) \ t^{\frac{k}{2}}\nonumber \\ &=& t^{- \frac{ n}{2}}  \sum \limits_{j\geq 0} b_{j}(x)\, t^j,
		\end{eqnarray}
	 where we have set $b_j=a_{2j}$	since the coefficients $a_{2j+1}$ vanish \cite{Gi}, \cite{Ro}. \\
We have \cite[Formula (5a)]{MS} 
	$$\mathfrak  s_g=3\, b_1,$$
	independently of the dimension $n$.\\
	This combined with   Theorem  \ref{thm:an} leads to the following expressions of the scalar curvature in terms of Wodzicki residues.
	 	\begin{prop} 
		When $n=2$ we have
		\begin{equation}\label{eq:sg2} 
		\mathfrak s_g(x)= -\frac{ 6\pi  }{ \sqrt{{\rm det} g}(x)}\, {\rm res}_x\left(\log \Delta\right). 
		\end{equation} 
		If $n>2$ then,
		\begin{equation}\label{eq:sgn}  
		\mathfrak s_g(x)= -\frac{3}{2}\,  (4\pi )^{\frac{n}{2}}\, \Gamma\left( \frac{n}{2}-1\right) \,\frac{{\rm res}_x \left(   (\Delta+\pi_\Delta)^{1-\frac{n}{2}}\right)}{\sqrt{{\rm det} g}(x)}. \end{equation} 
		
	\end{prop}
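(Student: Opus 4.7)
The plan is to read off the coefficient $b_1(x)$ from the heat-kernel expansion \eqref{eq:HKres} and then apply the classical identity $\mathfrak s_g = 3\, b_1$ from \cite{MS} quoted above \eqref{eq:Deltat}. The whole argument is a matter of matching powers of $t$ in two different presentations of the same asymptotic expansion, with a case distinction according to whether the critical exponent $1-\frac{n}{2}$ falls on the logarithmic (integer) or the power-type (non-integer) part of the expansion \eqref{eq:HKres}.

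First I would observe that, by \eqref{eq:Deltat}, the coefficient $b_1(x)$ is precisely the coefficient of $t^{1-\frac{n}{2}}$ in the small-time asymptotic expansion of $K_t(\Delta_g)(x,x)$. I then read off this coefficient from Theorem \ref{thm:an} applied to the admissible geometric differential operator $\Delta=\Delta_g$ (which is a weight after the smoothing perturbation $\Delta_g+\pi_{\Delta_g}$, cf.\ the notation convention preceding Theorem \ref{thm:an}).

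For $n=2$, the exponent $1-\frac{n}{2}=0$, so $b_1(x)$ is the constant term in $t$, which in \eqref{eq:HKres} is carried by the Kronecker factor $\delta_{\frac{n}{2}-[\frac{n}{2}]}$ (equal to $1$ when $n$ is even) multiplying $\res_x(\log \Delta_g)$; this gives
\[
b_1(x)=-\frac{4\pi}{2\sqrt{\det g}(x)}\,\res_x(\log \Delta_g)=-\frac{2\pi}{\sqrt{\det g}(x)}\,\res_x(\log \Delta_g),
\]
and multiplying by $3$ yields \eqref{eq:sg2}. For $n>2$, the exponent $1-\frac{n}{2}$ is of the form $k-\frac{n}{2}$ with $k=1\in[0,\frac{n}{2})\cap\Z$, so $b_1(x)$ is the coefficient of the $t^{1-\frac{n}{2}}$ term in the sum in \eqref{eq:HKres}, namely
\[
b_1(x)=-\frac{(4\pi)^{\frac{n}{2}}}{2\sqrt{\det g}(x)}\,\Gamma\!\left(\tfrac{n}{2}-1\right)\res_x\!\left((\Delta_g+\pi_{\Delta_g})^{1-\frac{n}{2}}\right),
\]
and multiplying by $3$ yields \eqref{eq:sgn} after invoking the notation convention $\res_x(\Delta^\alpha):=\res_x((\Delta+\pi_\Delta)^\alpha)$.

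There is essentially no obstacle here beyond bookkeeping: one just has to be careful that in the case $n=2$ the relevant coefficient is encoded in the logarithmic term (as $\frac{n}{2}$ is an integer), while for $n>2$ it is encoded in one of the residues of complex powers, and to check that the geometric operator $\Delta_g$ is admissible and nonnegative so that Theorem \ref{thm:an} applies. The only mildly subtle point is that \eqref{eq:HKres} is stated for $\Delta$ replaced by $\Delta+\pi_\Delta$, but since the heat-kernel asymptotics at $t\to 0$ are insensitive to smoothing perturbations (by Duhamel's formula, as recalled in the footnote of the Introduction) and the Wodzicki residue vanishes on smoothing operators (Corollary \ref{cor:reslogxlocal}), passing from $\Delta_g+\pi_{\Delta_g}$ to $\Delta_g$ on both sides causes no harm, so that matching with the expansion \eqref{eq:Deltat} of $K_t(\Delta_g)(x,x)$ is legitimate.
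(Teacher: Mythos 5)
Your proof is correct and takes essentially the same approach as the paper: the Proposition is stated in the paper as an immediate consequence of combining $\mathfrak s_g = 3\,b_1$ (McKean--Singer) with the heat-kernel expansion \eqref{eq:HKres} of Theorem~\ref{thm:an}, exactly as you do. Your explicit case distinction between $n=2$ (where $1-\tfrac n2=0$ is picked up by the logarithmic term via the Kronecker delta) and $n>2$ (where it is the $k=1$ term of the power sum) and your remark on the harmlessness of the smoothing perturbation $\pi_{\Delta_g}$ simply spell out the bookkeeping the paper leaves implicit.
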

	In dimension $2$, which is the case we are going to focus on in the sequel, we have
	\begin{equation}\label{eq:scalarcurvature}
	\langle \mathfrak s_g , \phi\rangle_g= -  6\pi  \,  {\rm Res} \left(\phi\, \log    \Delta_g\right)\quad \forall \phi \in \Ci(M).
	\end{equation}



\section{Holomorphic families on Hilbert modules} 
 We  carry out to   pseudodifferential operators on bundles of von Neumann Hilbert modules the constructions  of Section \ref{sec:1} on closed manifolds. 

\subsection[Finite type Hilbert modules]{Finite type Hilbert modules}
\label{ssec1}
We start out recalling the setup of finite type Hilbert modules closely following  \cite{BFKM} and \cite {Sch}\footnote{Note that in Schick's paper these objects are called $\mathcal A$-Hilbert spaces to distinguish them from Hilbert $C^*$-modules.}.

Let ${\mathcal A}$ be a von Neumann algebra equipped with a {\bf finite} trace $\tau\colon \mathcal A\to \mathbb C$. 
This means that $\mathcal A$ is a unital $\C$-algebra
with a $\star$ operation, and the following properties are satisfied:
\begin{enumerate}
\item $ (\cdot, \cdot)\colon  {\mathcal A}\times {\mathcal A} \longrightarrow \C$, defined by $( a, b) :=\tau ( ab^\star )$ is a scalar product and
the completion  $ {\mathcal A}_2$ with respect to this scalar product is a separable
Hilbert space.
\item ${\mathcal A}$ is weakly closed  when viewed as a subalgebra of the space ${\mathcal L}\left({\mathcal A}_2 \right)$ of linear, bounded operators on ${\mathcal A}_2$ (identifying  elements of ${\mathcal A}$ with the corresponding left translations in ${\mathcal L}\left({\mathcal A}_2\right)$).

\item  The trace is normal, i.e. for any monotone increasing net, $(a_i)_{i\in I}$ such
that $a_i \geq  0 $ and $a={\rm sup}_{i\in I}a_i$  exists in ${\mathcal A}$, one has $ {\rm tr}_{\tau}(a)= {\rm sup}_{i\in I} {\rm tr}_{\tau}(a_i)$.
\end{enumerate}
A right $\mathcal A$-Hilbert module is a Hilbert space $\mathcal W$ with a continuous right $\mathcal A$-action that admits an $\mathcal A$-linear isometric embedding into $\mathcal A_2\otimes H$ for some Hilbert space $H$. The Hilbert module $\mathcal W$ is called of {\bf finite type} if the space $H$ can be chosen finite dimensional.
We denote by $\mathcal L_{\mathcal A}(\mathcal W)$ the von Neumann algebra of bounded $\mathcal A$-linear operators on $\mathcal W$.
The (unbounded) trace on $\mathcal L_{\mathcal A}(\mathcal W)$ induced by $\tau$ and by the usual trace on $\mathcal L(H)$ is denoted ${\rm tr}_\tau$.
\begin{ex}
Let  $\Gamma$ be a countable group and $\ell^2(\Gamma)$ be the Hilbert space of square integrable complex valued functions on $\Gamma$.  
The von Neumann algebra $\mathcal N\Gamma$ consists by definition of all bounded operators on $\ell^2(\Gamma)$ that commute with the left convolution action of $\Gamma$. It contains $\mathbb C\Gamma$ as a weakly dense subset, and on $\mathbb C\Gamma$ the canonical trace $\tau$ is given by 
$$
\tau\left(\sum a_\gamma \gamma\right)=a_e$$
 where $e$ is the unit element in $\Gamma$.
Then $\mathcal W=\ell^2(\Gamma)$ is a finite type $\mathcal N\Gamma$-module, indeed $\ell^2(\Gamma)\simeq (\mathcal N\Gamma)_2$.
\end{ex}
\begin{ex} In particular, for $\Gamma=\mathbb Z^n$, the Fourier transform gives an isometric $\mathbb Z^n$-equivariant isomorphism $\ell^2(\mathbb Z^n)\to L^2(\T^n)$, where $\T^n$ is the $n$-dimensional torus. 
Therefore $\mathcal N\mathbb Z^n$ coincides with the commutant $\mathcal L(L^2(T^n))^{\mathbb Z_n}$ of the $\mathbb Z_n$-action on $L^2(\T^n)$, and one obtains an isomorphism  $\mathcal N\mathbb Z^n\simeq L^\infty(\T^n)$.  The canonical trace $\tau\colon L^\infty(\T^n)\to \mathbb C$ is
\begin{equation}\label{ex:tauZn}
\tau(f)=\int_{\T^n}fd\mu\ ,
\end{equation}
where $\mu$ is the measure on $\T^n$ induced by the canonical Lebesgue  measure on $\R^n$.
\end{ex}

On a manifold $M$, an {\bf $\mathcal A$-Hilbert module bundle} $\mathcal E\to M$  is a locally trivial bundle with fibre a (finitely generated, projective) $\mathcal A$-Hilbert module $\mathcal W$, the transition functions being    isometries of  $\mathcal A$-Hilbert modules.
\begin{rk}
The space of $L^2$-sections of an $\mathcal A$-Hilbert module bundle $\mathcal E\to M$ is an $\mathcal A$-Hilbert module. In fact, if $U$ is a subset of $ M$ such that $M\setminus U$ has measure zero and $\mathcal E_{|U}\simeq U\times \mathcal W$ (where $\mathcal W$ is the fibre), then $L^2(M,\mathcal E)\simeq L^2(U, \mathcal E_{|U})\simeq L^2(U)\otimes \mathcal W$. The set $U$ can be chosen for example as being the union of the interiors of the top-order cells of a triangulation of $M$. 
\end{rk}
\begin{ex}(Flat $\mathcal N\Gamma$-Hilbert module)
\label{ex:cov} Let $M$ be a closed manifold with $\pi_1(M)=\Gamma$, let $\pi\colon \widetilde M\to M$ be a universal covering of $M$, with $\Gamma$ acting on the right by deck transformations.
 Because the left $\Gamma$-action   and the right $\mathcal N\Gamma$-action   on $\ell^2(\Gamma)$ commute, $\mathcal H:=\widetilde M\times_\Gamma \ell^2 (\Gamma)$ is a finitely generated projective bundle of (right) $\mathcal N\Gamma$-Hilbert modules over $M$. Moreover, $\mathcal H$ is endowed with a flat structure since the transition functions are locally constant.
 \end{ex}

The flat bundle $\mathcal H\to M$ can be used to describe the analysis on the universal covering. In fact, there is a well-known correspondence between $L^2(M, \mathcal H)$ and $L^2(\widetilde M)$, which translates twisted differential operators on one hand with $\Gamma$-invariant differential operators on the other. We refer for example to \cite[7.5]{Sch} or \cite[Prop. E.6]{PzSc} for the complete dictionary for spaces,  operators, and $L^2$-invariants.
 Let us illustrate this in the case of the $n$-torus discussed above. 
\begin{ex}
\label{ex:covTn}
To the universal covering $\pi\colon \R^n \to \T^n$ with fundamental group $\Z^n$ corresponds the finitely generated projective bundle $\mathcal H:=\R^n\times_{\Z^n} \ell^2 (\Z^n)$   of (right) $L^\infty(\T^n)$-Hilbert modules over $\T^n$.  In this correspondence $L^2$-functions on $\R^n$ are viewed as  $L^2$-sections of the bundle ${\mathcal H}$ over $\T^n$ via the map which sends $f$ to $\hat f\colon x \mapsto \sum_{\gamma \in \Z^n} f(\gamma \pi^{-1}(x))\otimes [\pi^{-1}(x), \gamma]$.  This induces an isometry 
\begin{equation}\label{PHI}
\Phi\colon  L^2(\R^n)\simeq L^2(\T^n)\otimes \ell^2(\Z^n)\longrightarrow L^2(\T^n,   \mathcal H)
\end{equation}
which sends $\{f\in \Ci(\R^n) : \sum_{\gamma \in \Z^n} |f(\gamma x)|^2 <\infty \; \forall x\in \R^n \}$ to $\Ci(\T^n,\mathcal H)$.
 \end{ex}

\subsection[Pseudodifferential operators on Hilbert modules]{Pseudodifferential operators on bundles of Hilbert modules}
\label{ssec2}    

 We  describe    pseudodifferential operators on bundles of Hilbert modules, following  \cite[2.2-4]{BFKM}. 

 Let $\left(M,g\right)$ be a closed Riemannian manifold of dimension $n$ and let  $p\colon {\mathcal E}\to M$ be a bundle of finitely generated projective ${\mathcal A}$-Hilbert modules with fibre ${\mathcal W}$.
      \\ A  linear operator $A\colon  C^\infty(M,\mathcal E)\to  C^\infty(M, \mathcal E)$ is a (classical) pseudodifferential ${\mathcal A}$-operator of  order $a\in \C$ if in some atlas of ${\mathcal E}  \to M$ it is of the form $A=\sum_{j=1}^J A_j+R$ where 
      \begin{itemize}
      \item $R\colon C^\infty(M,\mathcal E)\to  C^\infty(M,\mathcal E)$ is a smoothing operator, namely with Schwartz kernel  given by a smooth  section of the bundle ${\mathcal L}_\mathcal A \to M\times M$ of ${\mathcal A}$-linear bounded operators whose fibre at $(x,y)\in M\times M$ is the Banach space of ${\mathcal A} $-linear operators from the fibre  ${\mathcal E}_y$ to the fibre ${\mathcal E}_x$, 
      \item the operators $A_j$ are  properly supported operators (meaning that the canonical projections $M\times M\to M$ restricted to the support of the Schwartz kernel are proper maps) from $C^\infty(M,\mathcal E)$ into itself. This implies in particular that the $A_j$ send $C^\infty_0(M,\mathcal E)$ into itself.
         In any coordinate chart $A_j$ is therefore of the form given in  \eqref{eq: class.op}, for some  symbol  $\sigma\in C^\infty \left(U
     \times \R^n, {\rm End}_{\mathcal A}\mathcal E\right)$  which is  asymptotically polyhomogeneous at infinity.
            \end{itemize} 
            For $a\in \C$, let  $\Psi^a(M, {\mathcal E})$ be the set of classical pseudodifferential operators of order $a$; then $\Psi^{-\infty}(M, {\mathcal E}):=\cap_a\Psi^a(M, {\mathcal E})$  corresponds to the smoothing operators, and $\Psi\left(M, {\mathcal E}\right):=\cup_a\Psi^a(M, {\mathcal E})$. 
 The subalgebra of differential operators will be denoted $\Theta(M, \mathcal E)$.
         \begin{rk}  \label{rk:Atorus} Going back to the example of the $n$-torus $\T^n$ and its $\Z^n$-covering $\pi:\R^n\to\T^n$, we  know that $\Z^n$-invariant  (i.e., $A \circ t_a^*= t_a^*\circ A$ for any $a\in \Z^n$ with $t_a(x)=x+a$)  differential operators on $\R^n$  give rise to elements of $\Theta(M,{\mathcal H})$. This raises the question  how  the algebra $\Psi(\T^n,\mathcal H)$  relates to the algebra of $\Z^n$-invariant classical pseudodifferential operators on $\R^n$ studied by Ruzhanski and Turunen in \cite{RT}
or equivalently the $\theta=0$  instance of the algebra $\Psi(\T^n_\theta)$    described in Section 4. 
  \end{rk}
  For any $A\in \Psi^a(M, \mathcal E)$, $a\in \R$, $A $ defines a bounded linear map on the $H^s$-Sobolev closure $H^s\left(M, \mathcal E\right)$ of $C^\infty\left(M, \mathcal E\right)$ with values in $H^{s-a}\left(M, \mathcal E\right)$, for any $s\in \R$. We refer the reader to \cite[\S 2.2]{BFKM} for Sobolev closures and to \cite[Prop. 2.7]{BFKM} for the property in the case $s=a$.

The notions of admissibility (Definition \ref{def:admiss1}) and Agmon angle (Lemma \ref{lem:agmon angle})   carry out from the closed manifold case to  the setup of pseudodifferential operators on bundles of von Neumann Hilbert modules, taking into account that here the spectrum is not necessarily purely discrete.
    \begin{defn}\label{def:agmonL2}
   Let $A$ be an operator  in $ \Psi^a(M, {\mathcal E})$. For an angle $\beta$ and for $\epsilon >0$, denote $V_{\beta,\epsilon}:=\{z\in \C\;: |z|<\epsilon \}\cup \{z\in \C\setminus 0\,:\; {\rm arg} z \in (\beta-\epsilon, \beta+\epsilon)\}$. 
    Then $\beta$ is called an {\bf Agmon angle} for $A$ if there is some $ \epsilon >0$ such that $\spec( A)\cap V_{\beta, \epsilon}=\emptyset$, where $\spec (A)$ stands for the spectrum of $A$.
                     \end{defn}
 In particular an operator with Agmon angle is elliptic and invertible.

\begin{defn} We  call {\bf weight}   an operator in $\Psi(M, {\mathcal E})$ of positive order   that admits an Agmon angle.
We call {\bf admissible} an operator $A\in \Psi(M, {\mathcal E})$ which is a weight modulo a smoothing perturbation, i.e. if it has positive order, and there exists a smoothing operator $R$ such that $A+R$ has an Agmon angle.
\end{defn}
The constructions of complex powers and the logarithm recalled in Section \ref{sseclog} extend word for word to the setting of Hilbert modules. 

Let $A$ be a weight in $\Psi^a(M,\mathcal E)$ with spectral cut $\beta$. 
   Then for  $\Re (z)>0$, its complex powers (see  \cite[(2.8)]{BFKM} for further details, and compare with \eqref{eq.cxp})
    \begin{equation}\label{eq.cxpL2}
    A_\beta^z= \frac{i}{2\pi} \int_{\Gamma_\beta} \lambda^z \, (\lambda-A)^{-1}\, d\lambda, \end{equation} 
   and respectively the operators (as in \eqref{eq.Log})
     \begin{equation}\label{eq.LogL2}L_\beta(A, z)= \frac{i}{2\pi} \int_{\Gamma_\beta} \log_\beta \lambda \;\;\lambda^z \, (\lambda-A)^{-1}\, d\lambda, 
     \end{equation} 
  are well defined bounded linear maps from $H^s\left(M, \mathcal E\right)$, $s\in \R$,  
  with values in $H^{ s-a\Re(z) }\left(M, \mathcal E\right)$ respectively  in $H^{ s-a\Re(z)  + \epsilon}\left(M,  \mathcal E\right)$, for any $\epsilon >0$. 
  Here $\Gamma_\beta$ is  a closed contour  in $\C\setminus \{re^{i\beta },\; r\geq 0\} $ around  the spectrum of $A$ oriented clockwise. These definitions extend   to the whole plane inductively on $k\in \N$  setting 
  $$A_\beta^z:=A^k \;A_\beta^{z-k} \;\;\;, \;\text{respectively} \;\;L_\beta(A, z):=A^k\;  L_\beta(A, z-k), \text{ for }\;\Re(z)<k.
  $$ 
 $A_\beta^z$ is an operator in $\Psi(M, \mathcal E) $ of order $az$ for any complex number $z$, and the logarithm 
 $$\log_\beta (A):=L_\beta(A, 0)
 $$ of $A$ is a bounded linear map from  $H^s\left(M, \mathcal E\right)$ to $H^{s+\epsilon}\left(M, \mathcal E\right)$, $\forall\epsilon >0$.
One has by construction  $ \log_\beta A\,  A_\beta^{z}=
        A_\beta^{z}\, \log_\beta A$, $\forall z\in \C$.
   \begin{rk} 
       \label{rk:beta2}Using the same convention as in Remark \ref{rk:beta}, we usually drop
        the explicit mention of the dependence of $\beta$. 
        \end{rk}
        In a local trivialisation, the symbol of $\log_\beta A$ reads (see e.g.  \cite{Sc2})
        \begin{equation}\label{eq:symbollogL2}
        \sigma(\log_\beta A)( x,\xi)=a\, \log \vert \xi\vert I +
        \sigma_{\rm cl}(\log_\beta A)(x, \xi)
        \end{equation}
         where $a$ denotes the order of $A$ and $\sigma_{\rm cl}(\log A)$ is a classical symbol of
        order zero with  homogeneous components   $\sigma_{-j}(\log  A)$ of degree $-j, j\in \Z_{\geq 0}$.

\subsection[The extended $\tau$-residue]{The extended $\tau$-Wodzicki residue}
 Mimicking the definition of the pointwise residue \eqref{eq:resx} on closed manifolds, 
          for any $A\in\Psi\left(M, {\mathcal E}\right)$ with symbol $\sigma(A)$ in a local chart of $M$ at a point $x$, we  call
             \begin{equation}\label{eq:restaux}\quad {\rm res}_x^\tau (A):=  \int_{|\xi|_x=1}{\rm tr}^\tau  (\sigma_{-n}(A))(x,\xi)\, \dbar_S\xi,
                             \end{equation} the pointwise ${\tau}$-Wodzicki residue of $A$.
              As in the closed manifold case, one proves that $ {\rm res}_x^{\tau}(\sigma(A))\, dx$  defines a global  density on $M$ \cite{Wo}, so we can define
              the ${\tau}$-Wodzicki residue of $A$  (compare with (\ref{eq:resA}))
                        \begin{equation}
                                \label{eq:resAtau}{\rm Res}^\tau(A) =\int_M {\rm res}_{x }^\tau(A)\, dx=   \int_M dx\int_{\vert \xi\vert_x=1} {\rm tr}^\tau\left(\sigma_{-n}(A)(x,\xi)\right)\,  \dbar_S\xi.
\end{equation}

\begin{rk} Definition \eqref{eq:restaux} relates to the definitions of the Wodzicki residue by Benameur--Fack  \cite[Def. 9]{BF}  and Vassout \cite{Va} in the context of measured foliations.
\end{rk}
              The {\bf pointwise Wodzicki $\tau$-residue}  extends to the logarithm $\log A$ of an admissible invertible operator $A\in \Psi(M, {\mathcal E})$. At a point $x$ we set
                         \begin{equation}
                         \label{eq:resxL2}\quad {\rm res}_x^{\tau}(\log A):=  \int_{|\xi|_x=1}
                         {\rm tr}^{\tau} (\sigma_{-n}(\log A))(x,\xi)\, \dbar_S\xi,
                         \end{equation}      
                  Mimicking the definition (\ref{eq:reslogARX}), for any admissible operator $A\in \Psi(M,{\mathcal E})$, we set
                  \begin{eqnarray}\label{eq:reslogAR}
                  \res_{x}^\tau(\log A):=\res_{x}^\tau(\log (A+R))\quad \forall x\in M,
                   \end{eqnarray}
                    where $R$ is any smoothing operator such that $A+R$ has an Agmon angle. \\  In the subsequent paragraph, we show as in the closed manifold case, that $ \res_x^\tau (\log A)\, dx$ defines a global density. 
                \subsection[The $\tau$-Wodzicki residue as a complex residue]{The $\tau$-Wodzicki residue as a complex residue}
                    For holomorphic families of operators in $\Psi(M, {\mathcal E})$, whose explicit definition we omit here since they are  defined in the same manner as in the closed case (see Definition \ref{defn:holop}), we give the Hilbert-module counterpart of Theorem \ref{thm:KVPSop}. For this we need as in the case of operators on closed manifolds, the cut-off integral $\displaystyle\cutoffint_{\R^n} {\rm tr}_x^\tau\sigma (x, \xi) \, \dbar\xi$ of a local  symbol $ \sigma$ of a classical operator, which is defined in the same way as in (\ref{eq:cutoff}), only replacing the fibrewise trace by ${\rm tr}^\tau$.  
                   \begin{prop}\label{prop:KVPSopL2}  For any holomorphic family  $ A(z)\in \Psi(M, {\mathcal E})$ of classical operators parametrised by $   \C$, with local symbols $\sigma(z)$ and holomorphic order $   -qz+a$ for some positive $q$ and some real number $a$,
                      \begin{enumerate}
                    \item the meromorphic map $z\mapsto {\rm TR}_x^\tau(A(z)):=\cutoffint_{\R^n} {\rm tr}^\tau\left(\sigma(z)( x, \xi)\right) \, \dbar\xi$ integrates over $M$ to  the map  $$z\mapsto {\rm TR}^\tau\left(A(z)\right):= \int_M {\rm TR}_x^\tau(A(z))\, dx $$  which is  meromorphic with simple poles $d_j:=\frac{a+n-j}{q}, j\in \Z_{\geq 0}$.
                    \item $\,${\rm\cite{KV}} The complex  residue at the point $d_j$  is given by:
                    \begin{equation}\label{eq:classicalKVtau}{\rm Res}_{z=d_j} {\rm TR^\tau}\left(A(z)\right)= \frac{1}{q}{ \rm
                    Res}^\tau (A(d_j) ).
                    \end{equation} 
                    \item  $\,${\rm\cite{PaSc}}  If $A(d_j)$  lies in $\Theta(M,E)$  i.e., if it is a differential operator, then $A^\prime (d_j)$  which need not be a classical pseudodifferential operator, nevertheless has a well defined 
                   Wodzicki residue $${ \rm
                     Res}^\tau (A^\prime(d_j) ):=\int_M {\rm res}_x^\tau\left(A^\prime (d_j)\right)\, dx$$
                     where
                     \[
                      {\rm res}_x^\tau\left(A^\prime (d_j)\right):=\int_{\vert \xi\vert_x=1} {\rm tr}^\tau\left(\sigma_{-n}\left(A^\prime (d_j)\right)\right)(x,\xi)\, \dbar_S\xi
                     \]
                     at the pole  $d_j$    and we have  
                     \begin{equation}\label{eq:PSclassicaloptau}
                   {\rm fp}_{z=d_j}{\rm TR}^\tau\left(A  (z)\right)=  
                   \frac{1}{q}{ \rm
                    Res}^\tau(A^\prime (d_j)).
                    \end{equation}   \end{enumerate} 
                   \end{prop}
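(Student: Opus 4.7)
The plan is to transpose the proof of Theorem \ref{thm:KVPSop}, together with its symbol-level precursor Proposition \ref{prop:KVPSsym}, to the Hilbert module framework, replacing the fibrewise trace ${\rm tr}_x$ by the normal trace ${\rm tr}^\tau$ on ${\rm End}_{\mathcal A}{\mathcal E}$. Since ${\rm tr}^\tau$ is linear, cyclic, and continuous on the trace-class ideal, and since it commutes both with $z$-differentiation and with the Cauchy integrals used in the symbolic calculus, every pointwise manipulation from the closed manifold case goes through verbatim.

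First I would work locally. Fix a trivialising chart $U$ for ${\mathcal E}$ near $x\in M$, and decompose the local symbol according to \eqref{eq:logclassical} and \eqref{eq:asymptsymb} as
$$\sigma(z)=\sum_{j=0}^{N-1}\omega(\xi)\,\sigma_{\alpha(z)-j}(z)+\sigma_{(N)}(z),\qquad \alpha(z)=-qz+a.$$
An elementary scaling computation of the cut-off integral of each homogeneous piece $\omega\,\sigma_{\alpha(z)-j}$ produces a simple pole at $d_j=(a+n-j)/q$ with residue equal to $\tfrac{1}{q}\int_{|\xi|_x=1}{\rm tr}^\tau\sigma_{-n}(A(d_j))(x,\xi)\,\dbar_S\xi=\tfrac{1}{q}\,{\rm res}^\tau_x(A(d_j))$, while the remainder $\sigma_{(N)}(z)$ has cut-off integral holomorphic on $\{\Re(z)>(a+n-N)/q\}$. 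Letting $N\to\infty$ yields the meromorphic continuation of $z\mapsto {\rm TR}^\tau_x(A(z))$. Integrating over $M$ and invoking the fact, established at the end of the previous subsection, that ${\rm res}^\tau_x(A(d_j))\,dx$ is a global density gives assertion (1) together with the residue formula \eqref{eq:classicalKVtau}.

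For the finite-part formula \eqref{eq:PSclassicaloptau}, assume that $A(d_j)\in\Theta(M,{\mathcal E})$; then $\sigma(A(d_j))$ is polynomial in $\xi$, so its cut-off integral vanishes identically. Following the closed manifold argument, introduce the auxiliary family $\tau_j(z):=(\sigma(z)-\sigma(d_j))/(z-d_j)$, which is not strictly a holomorphic family of classical symbols (the orders of $\sigma(z)$ and $\sigma(d_j)$ differ by a non-integer amount away from $d_j$) but satisfies $\tau_j(d_j)=\sigma'(d_j)$. Writing
$$\cutoffint_{\R^n}{\rm tr}^\tau\sigma(z)(x,\xi)\,\dbar\xi=(z-d_j)\cutoffint_{\R^n}{\rm tr}^\tau\tau_j(z)(x,\xi)\,\dbar\xi+\cutoffint_{\R^n}{\rm tr}^\tau\sigma(d_j)(x,\xi)\,\dbar\xi,$$
and taking the finite part at $z=d_j$ exactly as in the proof of \eqref{eq:PSclassicalsymb}, the local finite part reduces to $\tfrac{1}{q}\,{\rm res}^\tau_x(A'(d_j))$; integrating over $M$ then produces \eqref{eq:PSclassicaloptau}. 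The main obstacle is justifying that ${\rm res}^\tau_x(A'(d_j))\,dx$ defines a global density despite $A'(d_j)$ being log-polyhomogeneous of mixed order rather than classical; this is handled, as in Corollary \ref{cor:reslogxlocal}, by the invariance of the $(-n)$-th homogeneous component of the symbol under coordinate changes and smoothing perturbations combined with the tracial property of ${\rm tr}^\tau$.
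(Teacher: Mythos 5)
Your proposal is essentially the paper's own approach: transpose the closed manifold results (Proposition \ref{prop:KVPSsym}, Theorem \ref{thm:KVPSop}) to the Hilbert module setting by replacing ${\rm tr}_x$ with ${\rm tr}^\tau$ and noting that the local symbolic manipulations, the global density property of $\res_x^\tau$, and integration over $M$ all go through unchanged. The paper states the Proposition without a self-contained proof precisely because the transposition is routine once the $\tau$-trace and Sobolev/symbol calculus on $\mathcal A$-Hilbert module bundles are in place, so your reconstruction is at the right level of detail.

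Two small points of clarity. First, in part (3) the detour through the auxiliary family $\tau_j(z)=(\sigma(z)-\sigma(d_j))/(z-d_j)$ is actually unnecessary here: since $A(d_j)$ is assumed differential, $\cutoffint_{\R^n}{\rm tr}^\tau\sigma(d_j)(x,\xi)\,\dbar\xi$ vanishes identically, so the $\tau$-analogue of the symbol-level formula \eqref{eq:PSclassicalsymb} gives ${\rm fp}_{z=d_j}\cutoffint{\rm tr}^\tau\sigma(z)\,\dbar\xi=\frac{1}{q}\res_x^\tau(\sigma'(d_j))$ directly, with no need to factor through $\tau_j$. The $\tau_j$-device (together with the paper's caveat that it is only formal, since $\tau_j$ mixes two non-commensurable orders) is relevant for the more general case where $A(d_j)$ is differential plus trace-class, which the Hilbert module statement does not treat. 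Second, you invoke Corollary \ref{cor:reslogxlocal} to explain why $\res_x^\tau(A'(d_j))\,dx$ is a global density; that corollary addresses invariance under smoothing perturbations, not coordinate changes. Coordinate invariance of the $(-n)$-homogeneous term (with the $\log|\xi|$ contribution dropping out upon integration over the unit sphere) follows from the Wodzicki-type argument referenced in the preceding subsection, where the paper states that $\res_x^\tau(A)\,dx$ is a global density, and this extends to the log case exactly as in the scalar setting. Neither of these affects the correctness of your argument; they are only about which lemma is doing the work.
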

   As in Theorem  \ref{thm:mainthm} we   deduce the asymptotic expansion of the inverse Mellin transform of traces ${\rm TR}^\tau(A(z))$ of holomorphic families $A(z)$. Substituting ${\rm TR}^\tau$  to ${\rm TR}$ in Theorem
                    \ref{thm:mainthm} tells us that, if  $f $ is continuous on $]0,+\infty[$ with Mellin transform  $z\mapsto{\rm TR}^\tau\left(A (z)\right)$ for some holomorphic family  $A(z)\in \Psi(M,{\mathcal E})$     of affine order $\alpha(z)=a-qz$ with $q$  some positive real number, then  $f$ admits an asymptotic expansion    at $0$ given by
                     $$f(t)= \frac{1}{q}\sum\limits_{j\geq 0} a_j t^{-d_j} +O(t^{-\gamma}),$$ with
                     \begin{equation}\label{eq:ajL2}
                     a_j= -\frac{1}{q} {\rm Res}^\tau\left( A(d_j)\right) \quad\text{for}\quad d_j>0,
                     \end{equation} 
                     and constant term \begin{equation}\label{eq:ajbisL2}
                     a_j =-\frac{1}{q} {\rm Res}^\tau\left(A^\prime(0)\right) \quad\text{for}\quad d_j=0.
                     \end{equation} 

Theorem \ref{thm:zetaresM} extends in a straightforward manner.
\begin{thm}\label{thm:mainthmL2}
Given a weight $Q\in \Psi(M,{\mathcal E})$ of order $q\in \R_+$ and any operator   $A\in \Psi(M,{\mathcal E})$ (so not necessarily admissible) of order $a\in \R$,  the map
\begin{equation}\label{eq:zetaAQztau} 
z\mapsto \zeta^\tau(A,Q)(z):={\rm TR}^\tau(AQ^{-z})
\end{equation} 
is holomorphic on the half plane $\Re(z)>\frac{n+a}{q}$ and defines a meromorphic map on $\C$
called the {\it  $\zeta^\tau$-regularised trace of $A$} with respect to the weight $Q$,  with poles at $d_j=\frac{a+n-j}{q},\quad j\in \Z_{\geq 0}$. The complex residue at such a  pole is related to the Wodzicki $\tau$-residue of $A Q^{-d_j}$ by
$${\rm Res}_{z=d_j} \zeta^\tau(A,Q)(z)=\frac{1}{q}{\rm Res}^\tau(A Q^{-d_j}).$$
For any differential operator   $A\in \Theta(M,{\mathcal E})$, the $n$-form ${\rm res}^\tau_x(A\, \log Q)\,dx$ defines a global density on $M$ which integrates to the 
extended
Wodzicki $\tau$-residue of  $   A\, \log Q$. The $\zeta^\tau$-regularised trace $\zeta^\tau(A,Q)(z)$ is holomorphic   at zero and we have
 \begin{equation}\label{eq:zetatauzero}\zeta^\tau(A,Q)(0) =\lim_{z\to 0} \zeta^\tau(A,Q)(z)=-\frac{1}{q}{\rm Res}^\tau(A\, \log Q).
 \end{equation} 
\end{thm}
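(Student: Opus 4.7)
The plan is to apply Proposition \ref{prop:KVPSopL2} to the holomorphic family $\mathcal{A}\colon z\mapsto A(z):=A\,Q^{-z}$, which is classical of affine order $a-qz$. The first step is to verify that this is indeed a holomorphic family in the Hilbert module sense (the analogue of Definition \ref{defn:holop}): this is immediate from the Cauchy-type representation \eqref{eq.cxpL2} of $Q^{-z}$, which produces symbols depending holomorphically on $z$, as in \cite{BFKM}, and left multiplication by the fixed operator $A$ preserves holomorphy while shifting the order by $a$.

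Once this is in hand, items (1) and (2) of Proposition \ref{prop:KVPSopL2} apply verbatim and yield the meromorphic structure, the pole locations $d_j=(a+n-j)/q$, and the residue formula $\mathrm{Res}_{z=d_j}\zeta^\tau(A,Q)(z)=\tfrac{1}{q}\,\mathrm{Res}^\tau(AQ^{-d_j})$. Holomorphy on the half-plane $\Re(z)>(n+a)/q$ is then immediate, because on that half-plane the order of $A(z)$ has real part strictly less than $-n$, so $\mathrm{TR}^\tau(A(z))=\mathrm{Tr}^\tau(A(z))$ and analyticity follows from standard Schwartz-kernel estimates.

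For the behaviour at zero under the assumption $A\in\Theta(M,\mathcal{E})$, I would first argue that $\zeta^\tau(A,Q)$ is holomorphic at $z=0$: when $a+n\in\Z_{\geq 0}$ there is a pole-candidate at $d_{a+n}=0$, but its residue is $\tfrac{1}{q}\,\mathrm{Res}^\tau(A)$, and this vanishes because the symbol of a differential operator is polynomial in $\xi$ and therefore has no $(-n)$-homogeneous component, killing the integrand in \eqref{eq:restaux}. The value at zero is then obtained from item (3) of Proposition \ref{prop:KVPSopL2} applied to the same family $\mathcal{A}$: since $A'(0)=-A\log Q$, this yields
\[
\zeta^\tau(A,Q)(0)=\mathrm{fp}_{z=0}\,\mathrm{TR}^\tau(AQ^{-z})=\frac{1}{q}\,\mathrm{Res}^\tau\bigl(A'(0)\bigr)=-\frac{1}{q}\,\mathrm{Res}^\tau(A\log Q),
\]
which is exactly \eqref{eq:zetatauzero}, provided the right-hand side is already known to make sense as a global invariant.

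To close this remaining gap, I would run the preceding argument pointwise: by the pointwise analogue of \eqref{eq:PSclassicaloptau}, the local cut-off integral $\mathrm{TR}^\tau_x(AQ^{-z})$ is meromorphic in $z$ with finite part at $z=0$ equal to $-\tfrac{1}{q}\,\mathrm{res}^\tau_x(A\log Q)$, and since the global cut-off integral is by construction the $M$-integral of the local ones, the $n$-form $\mathrm{res}^\tau_x(A\log Q)\,dx$ must patch into a global density whose integral is $\mathrm{Res}^\tau(A\log Q)$. The main technical obstacle I anticipate is the transposition of the symbol-level arguments behind Proposition \ref{prop:reslogxlocalexpression} from the fibrewise trace to $\mathrm{tr}^\tau$; this should go through because $\mathrm{tr}^\tau$ is a faithful positive trace on $\mathcal{L}_{\mathcal{A}}(\mathcal{W})$, so that the cyclicity-based manipulations of the closed-manifold proof survive intact.
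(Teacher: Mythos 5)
Your proposal is correct and follows the same route the paper leaves implicit: the paper states only that Theorem~\ref{thm:zetaresM} ``extends in a straightforward manner,'' meaning exactly that one applies Proposition~\ref{prop:KVPSopL2} to the family $z\mapsto AQ^{-z}$, uses $A'(0)=-A\log Q$ in item~(3), and observes that the residue at $z=0$ vanishes for differential $A$. Your pointwise argument for the density property of $\mathrm{res}^\tau_x(A\log Q)\,dx$ also matches the paper's stated strategy (``will arise later as a consequence of a local formula for the $\zeta$-regularised trace'').
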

\begin{rk}
 Let $A, \Delta$ be differential operators in  $\Theta(M,\mathcal E)$  with $ \Delta$  admissible. Then exactly as in Proposition \ref{prop:reslogxlocalexpression} one can prove that the pointwise extended Wodzicki residue   ${\rm res}^\tau_{x}\left(A\,\log \Delta\right)$ is   an algebraic expression  in the  coefficients of $  A$ and in the $ x$-jets  of   the coefficients of  $  \Delta$  at that point.     
\end{rk}

	\subsection{The $\tau$-index as an extended $\tau$-residue}
	
	The above constructions extend to $\Z_2$-graded vector bundles. Let $  \mathcal E=\mathcal E_+\oplus  \mathcal E_- $ be a $\Z_2$-graded bundle of finite type $\mathcal A$-Hilbert modules over $M $ and let $D_\pm\colon C^\infty \left(M, \mathcal E_\pm\right)$ $\longrightarrow C^\infty \left(M,  \mathcal E_\mp\right)$ be two elliptic  differential operators of positive order $d$. We assume that the operators $D_+$ and $D_-$ are formally adjoint to each other which we write $D_-=D_+^*$. 
	Hence 
	\begin{displaymath}
	D:= \begin{bmatrix}
	0 & D_-  \\
	D_+ & 0  
	\end{bmatrix} 
	\end{displaymath}
	is essentially selfadjoint. 
	Let $
	\Delta:=D^2= \begin{bmatrix}
	D_-D_+ & 0  \\
	0 & D_+D_- 
	\end{bmatrix}
	= \Delta_+\oplus \Delta_-$. 
		The ellipticity of $D$ implies that the projection $\pi_\Delta$ onto $\Ker \Delta=\Ker D$ is a smoothing operator of finite $\tau$-rank \cite{BFKM, Sch}.
	Therefore one defines the $\tau$-dimension of the $\mathcal A$-Hilbert modules $\Ker D_\pm$ as
	$${\rm dim}_{\tau}(\Ker D_\pm):= \tau(\pi_{\Delta_\pm})\in \R
$$
and the difference
\begin{equation}
{\rm ind}^\tau D_+:={\rm dim}_{\tau}(\Ker D_+)-{\rm dim}_{\tau}(\Ker D_-)\in \R
\end{equation}
is called the {\bf $\tau$-index} of the operator $D$.  In the case $\mathcal A=\C$ this is the usual definition of the Fredholm index of the operator.
	
	\begin{rk} \label{rk:inv}There exist smoothing perturbations $R$ of the nonnegative selfadjoint operator $D^2$ such that $D^2+R$ is invertible. Since we are in a von Neumann algebraic setting, this follows for example from  \cite[Proposition 2.10]{LP1}, see also \cite{LP2}.
\end{rk}
Therefore there exist operators $R, R'$ such that $D_-D_+ +R$ and  $D_+D_-+ R'$ are elliptic and nonnegative and hence so are their leading symbols nonnegative. 
	Thus $\Delta, \Delta_+, \Delta_- $ define  admissible operators  with spectral cut $\pi$.\\
	Consequently, we can define the {\bf pointwise extended super $\tau$-residue} of $\log \Delta$ as  the difference of the pointwise extended residues of $\log \Delta_+$ and $\log \Delta_-$
	$${\rm sres}^\tau_x\left(\log \Delta \right)(x):={\rm res}^\tau_x\left(\log \left(\Delta_++ R'\right)\right)- {\rm res}^\tau_x\left(\log\left(\Delta_-+ R\right)\right).$$              
	It  can be integrated over $M$ to build the {\bf extended super $\tau$-residue } $${\rm sRes}^\tau\left(\log \Delta \right):= \frac{1}{(2\pi)^n} \int_M {\rm sres}^\tau_x\left(\log \Delta \right)(x)\, dx.
	$$

	\begin{cor}\label{cor:indres}
		The $\tau$-index of $D_+$ is a local expression proportional to the extended Wodzicki (super) $\tau$-residue of the logarithm of $\Delta$
		\begin{equation}\label{eq:indreslog}
		{\rm ind}^\tau (D_+)=  -\frac{1}{2d}\, {\rm sRes}^\tau\left(\log(\Delta)\right).
		\end{equation}
	\end{cor}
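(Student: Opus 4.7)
The plan is to express $\mathrm{ind}^\tau(D_+)$ as a difference of two $\tau$-zeta values at zero and then invoke Theorem \ref{thm:mainthmL2}. By Remark \ref{rk:inv} I may fix smoothing operators $R_\pm$ so that $Q_\pm := \Delta_\pm + R_\pm$ are weights of order $q = 2d$. The Hilbert-module analogue of Corollary \ref{cor:reslogxlocal}, valid by the same symbol-level argument since smoothing perturbations do not alter any homogeneous component of the symbol, guarantees that $\res^\tau_x(\log Q_\pm)$ depends only on $\Delta_\pm$ and not on the choice of $R_\pm$. Without loss of generality I therefore take $R_\pm = \pi_{\Delta_\pm}$, the finite $\tau$-rank orthogonal projections onto $\ker \Delta_\pm = \ker D_\pm$. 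Set $\zeta_\pm^\tau(z) := \zeta^\tau(I,Q_\pm)(z) = \mathrm{TR}^\tau(Q_\pm^{-z})$.

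The heart of the proof is the $\tau$-McKean--Singer identity
\[
\zeta_+^\tau(z) - \zeta_-^\tau(z) \;=\; \dim_\tau \ker D_+ - \dim_\tau \ker D_- \;=\; \mathrm{ind}^\tau D_+ ,
\]
which I would verify first for $\Re(z) > n/(2d)$ and then extend to $z = 0$ by analytic continuation. The argument runs as follows: decomposing each $Q_\pm^{-z}$ along $\ker \Delta_\pm \oplus (\ker \Delta_\pm)^\perp$, the kernel component acts as the identity and contributes $\dim_\tau \ker D_\pm$ to the $\tau$-trace, while the non-kernel component equals $\Delta_\pm^{-z}(I - \pi_{\Delta_\pm})$. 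Using the polar decomposition $D_+ = U |D_+|$, the partial isometry $U \in \mathcal L_{\mathcal A}(\mathcal E)$ has initial space $(\ker D_+)^\perp$ and final space $(\ker D_-)^\perp$ and satisfies $U \Delta_+ U^* = \Delta_-$ on the latter; by the trace property and $\mathcal A$-linearity of $\mathrm{tr}^\tau$, the two non-kernel $\tau$-traces are equal, and the claim follows.

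Granted this identity, applying Theorem \ref{thm:mainthmL2} with $A = I$ and $Q = Q_\pm$ of order $2d$ yields $\zeta_\pm^\tau(0) = -\frac{1}{2d} \Res^\tau(\log Q_\pm)$. Subtracting and invoking the definitions of $\sres^\tau_x(\log \Delta)$ and ${\rm sRes}^\tau(\log \Delta)$ gives
\[
\mathrm{ind}^\tau D_+ \;=\; \zeta_+^\tau(0) - \zeta_-^\tau(0) \;=\; -\frac{1}{2d}\bigl[\Res^\tau(\log Q_+) - \Res^\tau(\log Q_-)\bigr] \;=\; -\frac{1}{2d}\,{\rm sRes}^\tau(\log \Delta),
\]
which is the announced formula.

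The main obstacle is the McKean--Singer cancellation in the finite von Neumann setting. In the classical Fredholm case one simply pairs non-zero eigenvalues of $\Delta_+$ and $\Delta_-$ via $D_+$, but here the spectra are in general continuous, so the pairing must be implemented by the partial isometry $U$ of the polar decomposition, and one has to verify that $U,U^*$ preserve the relevant $\tau$-traces of spectral functions. This requires working inside $\mathcal L_{\mathcal A}(\mathcal E)$ and exploiting the $\mathcal A$-linearity of ${\rm tr}^\tau$; it is a standard but nontrivial point of $L^2$-invariants theory (compare \cite{Sch,BFKM}) that deserves careful verification. Once the McKean--Singer identity is established, the remainder of the proof is a one-line invocation of Theorem \ref{thm:mainthmL2} together with the locality of the extended $\tau$-Wodzicki residue.
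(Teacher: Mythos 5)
Your proof is correct and follows essentially the same route as the paper: express $\mathrm{ind}^\tau(D_+)$ via McKean--Singer as a difference of $\tau$-zeta values at zero for the weights $Q_\pm=\Delta_\pm+\pi_{\Delta_\pm}$, then invoke Theorem~\ref{thm:mainthmL2} to convert each $\zeta^\tau_\pm(0)$ into $-\tfrac{1}{2d}\,\mathrm{Res}^\tau(\log Q_\pm)$ and subtract. The paper invokes ``McKean--Singer combined with a Mellin transform'' without proof, whereas you spell out the polar-decomposition/partial-isometry argument that makes the off-kernel contributions cancel; this is welcome detail, and your zeta-level statement $\zeta^\tau_+(z)-\zeta^\tau_-(z)=\mathrm{ind}^\tau(D_+)$ is in fact slightly cleaner than the paper's heat-trace formulation, which as written picks up a spurious $e^{-t}$ factor on the kernel term for $t>0$.
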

	
	\begin{proof} The McKean-Singer formula combined with a Mellin transform yields for any positive real number $t$ 
		and for any complex number $z$ 
		\begin{eqnarray*}
			{\rm ind}^\tau (D_+)
			&=&  {\rm Tr }^\tau\left(e^{- t \left(\Delta_++\pi_{\Delta_+} \right)} \right)-  {\rm Tr }^\tau\left(e^{-t \left(\Delta_-+\pi_{\Delta_-} \right)} \right)\\
			&=& \zeta^\tau_{   \Delta_++\pi_{\Delta_+} }(z)-    \zeta^\tau_{  \Delta_-+\pi_{\Delta_-}}(z)  \\
					&=& -\frac{1}{2d}
					\left({\rm Res}^\tau\left(\log\left(\Delta_++ \pi_{\Delta_+}\right)\right)- {\rm Res}^\tau\left(\log\left(\Delta_-+ \pi_{\Delta_-}\right)\right)\right)\\
					&=& -\frac{1}{2d}\, {\rm sRes}^\tau\left( \log \Delta \right).
				\end{eqnarray*}
			\end{proof}
\subsection[Locally equivalent operators and Atiyah's $L^2$-index theorem]{The extended residue for locally equivalent operators and Atiyah's $L^2$-index theorem}

In the following, denote by $(U;\mathcal E, \mathcal F)$ a triple where $U$ is a manifold and $\mathcal E,\mathcal F$ are bundles of finitely generated projective $\mathcal A$-Hilbert modules over $U$. Morphisms between these objects are of the form $
\alpha=(f; r,s)\colon (U';\mathcal E',\mathcal F')\to (X;\mathcal E,\mathcal F)$
where $f\colon U'\to U$ is an {\bf open embedding}, $r\in {\rm Hom} (\mathcal E', f^*\mathcal E)$, $s\in {\rm Hom} (f^*\mathcal F, \mathcal F')$.

Given any linear map $L\colon C_0^\infty (U, \mathcal E)\to C^\infty (U, \mathcal F)$, a morphism $\alpha \colon (U';\mathcal E',\mathcal F')\to (U;\mathcal E,\mathcal F)$ defines a map
$
\alpha^\sharp L \colon C_0^\infty (U', \mathcal E')\to C^\infty (U', \mathcal F')
$
that makes the following diagram commute
$$
\xymatrix{C_0^\infty (U, \mathcal E) \ar[r]^L & C^\infty (U, \mathcal F)\ar[d]^{\alpha^*}\\
C_0^\infty (U', \mathcal E') \ar[u]^{\alpha_*}\ar[r]^{\alpha^\sharp L} &C^\infty (U', \mathcal F')}
$$
where $\alpha^*$ denotes the composition of $s$ with the map induced by the pullback, and $\alpha_*$ denotes the composition of $r$ with the push-forward.
\begin{defn}
\label{def:loc.eq}
Let $M, M'$ be two manifolds and $A \in \Theta \left(M, {\mathcal E}\right), A'\in \Theta\left(M', {\mathcal E}'\right)$ be {\bf differential operators} acting on the sections of $\mathcal A$-Hilbert modules bundles  ${\mathcal E}, {\mathcal E}^\prime$ over $M$ and $M'$ respectively. The operators 
$A$ and $A'$ are said to be \emph{locally equivalent} if
\begin{itemize}
\item there exists a local diffeomorphism $\phi\colon M'\to M$ meaning that for any $ x'$ in $M'$ there is a neighborhood $U'$ of $x'$ such that $U=\phi(U')$ is open and $\phi_{U'}^U\colon U'\to U$ is a diffeomorphism
\item correspondingly, there are morphisms $\alpha=(f; r,s)\colon (U', {\mathcal E}'_{|U'}, {\mathcal E}'_{|U'})\to(U, {\mathcal E}_{|U}, {\mathcal E}_{|U})$
with $f=\phi_{U'}^U$, and with $r,s$ isomorphisms such that
\begin{equation}
\label{eq:phi}
A'_{U'}=\alpha^\sharp (A_U)
\end{equation}
where we have denoted by $A'_{U'}$ the restriction of $A'$ to the open set $U'$.
\end{itemize}
\end{defn}

\begin{ex}\label{ex:op.cov}(Twists by flat bundles of Hilbert modules)
Let $M$ be closed, $E\to M$ be a vector bundle, and $B\in \Theta(M, E)$ be a differential operator.  Let  $\mathcal F\to M$  be a flat bundle of finitely generated projective $\mathcal A$-Hilbert modules, endowed with a flat connection $\nabla_{\mathcal F}$. Denote by $\mathcal W$ the fibre. Let  $\underline{\mathcal W}=M\times \mathcal W$ denote the trivial bundle with fibre $\mathcal W$. Because $B$ is differential, one can consider on the one hand $B_{\W}$ to be the trivial extension of $B$ to $ E\otimes \W$ and on the other hand $B_{\mathcal F}$ the operator $B$ twisted by the flat connection on $\mathcal F$.  Then $A=B_{\W}$  on $\mathcal E'=E\otimes \W$ and $A'=B_{\mathcal F}$  on $\mathcal E'=E\otimes \mathcal F$ are locally equivalent by taking $\pi$ the identity map on $M$, and the local morphisms given by local trivialisations of $\mathcal F$ which are parallel with respect to $\nabla_{\mathcal F}$.
\end{ex}

\begin{ex} 
\label{ex:fond}As a particular case of the above (with $\mathcal W=\ell^2(\Gamma)$), let $\mathcal H=\widetilde M\times_\Gamma \ell^2(\Gamma)$ be the bundle defined in Example \ref{ex:cov}, and $\ellt=M\times \ell^2(\Gamma)$. If $A\in \Theta(M,E)$  is a differential operator, $A_{\mathcal H}$ is locally equivalent to $A_{\ellt}$.
\end{ex}  
\begin{ex}  Specialising to the torus and using the notations of 
	 Example \ref{ex:covTn}, for any differential operator  $A$ on $\T^n$,  the local equivalence between 
	 $A_{\mathcal H}$ and $A_{\ellt}$ translates to the well-known local equivalence of the
	 lifted operator $\pi^\sharp A$ (with a slight abuse of notation) on $\R^n$ with $A$.
	  \end{ex}
\begin{prop}
\label{prop:loc.equ} Let $A$, $A'$ and $B$, $B'$ be pairs of locally equivalent differential operators in the sense of Definition \ref{def:loc.eq}. Moreover assume that $B, B' $ are admissible. Then  with the notation of Definition \ref{def:loc.eq} 
\begin{eqnarray}
\label{eq:localrescov} 
f^*\left({\rm res}^\tau_x\left(A\log B\right)\, dx\right)&=& {\rm res}^\tau_{x'}\left(   A'\, \log \left(  B'\right)\right)\, dx', \\
 f^*\left({\rm res}^\tau_x\left(A\, B^\alpha\right)\, dx\right)&=& {\rm res}^\tau_{x'}\left(A'\,  \left( B'\right)^\alpha\right)\, dx'	\;\;,  \a\in \R
\end{eqnarray} for any point $x'$ and any local diffeomorphism $f=\phi_{U'}^U\colon   U' \to U$ on an open subset $U'$ containing $x'$,
which when integrated over  $M'$ yields
	\begin{eqnarray}\label{eq:reslogEQ}
&{\rm Res}^\tau\left(A \log B\right)={\rm Res}^\tau(A'  \log \left(  B'\right))\\
&	{\rm Res}^\tau\left(A  B^\alpha \right)={\rm Res}^\tau\left(A'  \left( B'\right)^\alpha \right)
	,\;\;  \a\in \R
\end{eqnarray}
	\end{prop}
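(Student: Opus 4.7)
The plan is to reduce the proposition to two facts established earlier in the paper: the \emph{locality} of the extended $\tau$-Wodzicki residue, and the \emph{covariance} of pseudodifferential symbols under local diffeomorphisms together with bundle isomorphisms that preserve the $\mathcal{A}$-Hilbert module structure. By the Hilbert-module version of Proposition \ref{prop:reslogxlocalexpression} recorded in the remark following Theorem \ref{thm:mainthmL2}, the pointwise extended residue ${\rm res}^\tau_x(A \log B)$ is an algebraic expression in the coefficients of $A$ and in finitely many $x$-jets of the homogeneous symbol components of $B$ at $x$, composed with ${\rm tr}^\tau$ and integrated over the unit cotangent sphere. The same holds for ${\rm res}^\tau_x(A\, B^\alpha)$, since the homogeneous components of $B^\alpha$ are obtained from those of the resolvent $(\lambda-B)^{-1}$ via the Cauchy contour integral \eqref{eq.cxpL2}, and likewise those of $\log B$ via \eqref{eq.LogL2}.

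Given $x' \in M'$, I would pick $U'$ and $U=\phi(U')$ as in Definition \ref{def:loc.eq}, set $x=f(x')$, and exploit the local equivalence $A'_{U'}=\alpha^{\sharp} A_{U}$, $B'_{U'}=\alpha^{\sharp} B_{U}$ together with the standard diffeomorphism-invariance of pseudodifferential symbols. Concretely, the homogeneous components of the symbol of $A'$ at $x'$ are obtained from those of $A$ at $x$ via the change of variables induced by $f$ followed by conjugation with the bundle isomorphisms $r,s$. The same rule applies to $B'$, hence to its resolvent, to its complex powers $(B')^\alpha$, and to its logarithm $\log B'$, because these are built from $B'$ by the intrinsic functional-calculus expressions \eqref{eq.cxpL2}--\eqref{eq.LogL2}, which manifestly commute with the $\alpha^\sharp$ operation. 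Since $r$ and $s$ are isometries of $\mathcal{A}$-Hilbert modules, they commute with ${\rm tr}^\tau$ on the endomorphism fibres. Consequently the fibrewise $\tau$-trace of the $(-n)$-homogeneous symbol component of $A' \log B'$ (respectively $A'(B')^\alpha$) at $x'$ is the pullback under $f$ of that of $A \log B$ (respectively $A B^\alpha$) at $x$; integrating over the unit cotangent sphere and applying the usual change-of-variable formula for densities under a local diffeomorphism yields the two pointwise identities.

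To obtain the global identities, I would cover $M'$ by open charts on which the comparison diffeomorphism $\phi$ is defined, write the integral $\Res^\tau$ on $M'$ as a sum of local contributions via a partition of unity, and use the pointwise identity chart by chart to identify each summand with the corresponding integral over $\phi(U')\subset M$. In the motivating examples (twists by flat bundles, Example \ref{ex:op.cov}, and covering situations, Example \ref{ex:fond}), $\phi$ is either the identity on $M=M'$ or a global diffeomorphism, and the summation collapses to the global equality $\Res^\tau(A\log B)=\Res^\tau(A'\log B')$, respectively $\Res^\tau(A B^\alpha)=\Res^\tau(A'(B')^\alpha)$.

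The main technical obstacle is that $\log B$ is not classical: its symbol carries a $b\log|\xi|$ contribution, where $b=\ord B$, as in \eqref{eq:symbollogL2}. One must check that this non-classical term causes no trouble. This is handled by two observations: first, $b\log|\xi|$ integrates to zero over the unit cotangent sphere, so only the classical part $\sigma_{\rm cl}(\log B)$ enters in ${\rm res}^\tau_x(A\log B)$; second, the classical part is defined via the intrinsic contour integral \eqref{eq.LogL2}, which transforms covariantly under the local equivalence because the symbol of the resolvent $(\lambda-B)^{-1}$ does. Once these two points are granted, the argument above gives the proposition uniformly for both $\log B$ and $B^\alpha$, $\alpha\in\R$.
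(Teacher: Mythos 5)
Your proof is correct and follows essentially the same route as the paper: both arguments reduce the statement to the covariance of the pointwise residue density under the local morphisms $\alpha^\sharp$, then integrate. You spell out a few details the paper compresses (that $\alpha^\sharp$ commutes with the Cauchy functional calculus so $\log\alpha^\sharp B=\alpha^\sharp\log B$, that the non-classical $\log|\xi|$ term is killed by the sphere integral, and the partition-of-unity step), but the underlying mechanism — the density property from Theorem \ref{thm:mainthmL2} plus the definition of local equivalence — is exactly the one the paper invokes.
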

\begin{proof}
 Since $B$ and $B'$ are admissible, there are smoothing operators $R, R'$  such  that   $Q=B+R$  and $Q=B'+R'$ are weights.  From Theorem \ref{thm:mainthmL2} applied to the operator $A$ in $\Theta(M,{\mathcal E})$, we know that the $n$-form $x\mapsto {\rm res}^\tau_x\left(A\log B\right)\, dx$  defines a global density so it transforms covariantly under coordinate transformations.
Thus, the pull-back of this density by the local diffeomorphism $f=\phi_{U'}^U\colon U'\to U$  reads

$$f^*\left({\rm res}^\tau_x\left(A\log B\right)\, dx\right)= {\rm res}^\tau_{ x'}(  f^\sharp A\, \log  f^\sharp B)\, dx' ,$$
where $x'=f^{-1}(x)$. Now $A'=f^\sharp A$ and $B'=f^\sharp B$, so we get the result.
\end{proof}
Specialising to (essentially) {\bf selfadjoint} elliptic differential operators provides an alternative proof of Atiyah's $L^2$-index theorem in the Hilbert module formulation.

\begin{cor}(Atiyah's $L^2$-index theorem)\label{cor:Aty}
 Let $D$ be an essentially selfadjoint differential operator of positive order $d$ acting on a $\Z_2$-graded vector bundle $E^+\oplus E^-\to M$, and assume $D$ is odd with respect to the grading, i.e. $
 D:= \begin{bmatrix}
 0 & D_-  \\
 D_+ & 0  
 \end{bmatrix}   $.
 Let $ D_\mathcal H:= \begin{bmatrix}
 0 & D_{\mathcal H,-}  \\
 D_{\mathcal H,+} & 0  
 \end{bmatrix}   $ be the twisted operator defined in Example \ref{ex:fond} acting on the bundle of $\mathcal N\Gamma$-Hilbert modules $E\otimes \mathcal H$. Then
 $$
 {\rm ind}^\tau(D_{\mathcal H, +})={\rm ind}( D_+)\ .
 $$
 \end{cor}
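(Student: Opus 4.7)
The plan is to express both the classical Fredholm index of $D_+$ and the $\tau$-index of $D_{\mathcal H,+}$ as extended super Wodzicki residues by means of Corollary \ref{cor:indres}, and then match these residues using the locality of the construction under local equivalence (Proposition \ref{prop:loc.equ}).

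First I would apply Corollary \ref{cor:indres} in both frameworks. Since $D$ is essentially selfadjoint and odd, the operators $\Delta_\pm = D_\mp D_\pm$ are nonnegative and admissible after a smoothing perturbation; the corollary, specialized to $\mathcal A = \C$ and $\tau = \mathrm{id}$ where the $\tau$-index reduces to the classical Fredholm index, yields
\[
{\rm ind}(D_+) = -\tfrac{1}{2d}\,{\rm sRes}(\log \Delta),\qquad \Delta = \Delta_+\oplus \Delta_-.
\]
The twisted operator $D_{\mathcal H}$ inherits essential selfadjointness and oddness from $D$, and $\Delta_{\mathcal H,\pm}$ are admissible on the $\mathcal N\Gamma$-bundles $E^\pm \otimes \mathcal H$ by Remark \ref{rk:inv}, so the same corollary in its genuine Hilbert-module form gives
\[
{\rm ind}^\tau(D_{\mathcal H,+}) = -\tfrac{1}{2d}\,{\rm sRes}^\tau(\log \Delta_{\mathcal H}),\qquad \Delta_{\mathcal H} = \Delta_{\mathcal H,+}\oplus \Delta_{\mathcal H,-}.
\]
It therefore suffices to establish the identity ${\rm sRes}^\tau(\log \Delta_{\mathcal H}) = {\rm sRes}(\log \Delta)$.

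Next I would invoke Example \ref{ex:op.cov}: for each parity the twisted Laplacian $\Delta_{\mathcal H,\pm}$ on $E^\pm \otimes \mathcal H$ is locally equivalent, in the sense of Definition \ref{def:loc.eq}, to the trivial extension $\Delta_{\ellt,\pm}$ on $E^\pm \otimes \ellt$, the morphisms being induced by flat local trivializations of $\mathcal H$. Proposition \ref{prop:loc.equ}, applied with $A = A' = {\rm Id}$ and $(B,B') = (\Delta_{\mathcal H,\pm},\Delta_{\ellt,\pm})$, yields
\[
{\rm Res}^\tau(\log \Delta_{\mathcal H,\pm}) = {\rm Res}^\tau(\log \Delta_{\ellt,\pm}).
\]
On the trivial bundle $\ellt$ the symbol of $\Delta_{\ellt,\pm}$, and hence that of $\log \Delta_{\ellt,\pm}$, decomposes as $\sigma\otimes 1_{\ell^2(\Gamma)}$, so that the fibrewise $\tau$-trace factorizes using $\tau(1)=1$:
\[
{\rm tr}^\tau\bigl(\sigma_{-n}(\log \Delta_{\ellt,\pm})\bigr)(x,\xi) = {\rm tr}\bigl(\sigma_{-n}(\log \Delta_\pm)\bigr)(x,\xi).
\]
Integrating over the unit cosphere and over $M$ gives ${\rm Res}^\tau(\log \Delta_{\ellt,\pm}) = {\rm Res}(\log \Delta_\pm)$.

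Combining these two identifications and taking the $\Z_2$-graded difference produces ${\rm sRes}^\tau(\log \Delta_{\mathcal H}) = {\rm sRes}(\log \Delta)$, which in conjunction with the two formulas from Corollary \ref{cor:indres} gives ${\rm ind}^\tau(D_{\mathcal H,+}) = {\rm ind}(D_+)$. The main technical subtlety will be in the handling of the various smoothing perturbations $R, R'$ needed to invert $\Delta_\pm$ and $\Delta_{\mathcal H,\pm}$ before forming their logarithms: one has to make sure that the equalities above are independent of the particular choices made. This follows from the $\tau$-analog of Corollary \ref{cor:reslogxlocal}, since the extended residue is by construction insensitive to smoothing perturbations of the homogeneous components of the symbol, so neither side of the claimed identity depends on the chosen invertibilising perturbations.
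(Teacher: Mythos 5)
Your proof is correct and follows essentially the same route as the paper: apply Corollary~\ref{cor:indres} to convert both indices into extended super $\tau$-residues, then use Example~\ref{ex:op.cov} together with Proposition~\ref{prop:loc.equ} to transfer the super $\tau$-residue from $\Delta_{\mathcal H}$ to the trivially extended $\Delta_{\ellt}$, and finally identify the result with the classical quantity. The one cosmetic difference is in the final identification: the paper invokes ${\rm ind}^\tau(D_{\ellt,+})={\rm ind}(D_+)$ directly (which amounts to the computation $\dim_\tau(\Ker D_\pm\otimes\ell^2\Gamma)=\dim(\Ker D_\pm)\cdot\tau(1)$), whereas you spell out the equivalent fact at the level of symbols, namely that ${\rm tr}^\tau(\sigma\otimes 1_{\ell^2\Gamma})={\rm tr}(\sigma)$, so that ${\rm Res}^\tau(\log\Delta_{\ellt,\pm})={\rm Res}(\log\Delta_\pm)$; both rest on $\tau(1)=1$. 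Your closing remark about independence from the choice of the invertibilising smoothing perturbations is appropriate and matches Corollary~\ref{cor:reslogxlocal} and its $\tau$-analogue, which the paper relies on implicitly.
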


 \begin{proof} 
  Recall that by Example \ref{ex:op.cov} the operator $D_\mathcal H^2$ is locally equivalent to the trivial extension $D_{\ellt}$ acting on $\ellt=M\times \ell^2(\Gamma)$. Using Remark \ref{rk:inv}, $D_\mathcal H^2$ is admissible (with $\pi$ as a spectral cut), so we can apply Proposition \ref{prop:loc.equ} which yields \footnote{Prop. \ref{prop:loc.equ} easily extends to the $\Z_2$-graded case replacing the $\tau$-residue by the super $\tau$-residue.}
$$
{\rm sRes}^\tau(\log( D_{\mathcal H}^2))={\rm sRes}^\tau(\log( D_{\ellt}^2))\ .
$$
 By formula  \eqref{eq:indreslog}      
 $$
{\rm ind}^\tau ( D_{\mathcal H, +})=-\frac{1}{2d}{\rm sRes}^\tau(\log( D_{\mathcal H}^2))\ .
$$
Analogously,
$${\rm ind}(D_+)={\rm ind}^\tau ( D_{\ellt, +})=-\frac{1}{2d}{\rm sRes}^\tau(\log( D_{\ellt}^2))
$$
 where we have used that ${\rm ind}^\tau ( D_{\ellt,+})={\rm ind}(D_+)$, so that the equality follows. 
\end{proof} 
\begin{rk}
 This is a self-contained pseudodifferential proof of Atiyah's result, which relies on the locality of the Wodzicki residue. It is very similar in spirit to John Roe's proof on coverings \cite{Ro}. 
 \end{rk}
Let $D$ be an essentially selfadjoint differential operator acting on a vector bundle $E \to M$. Let $ D_\mathcal H$ be the twisted operator defined in Example \ref{ex:fond}.

Since  $D^2_{\mathcal H}$ is admissible (see Remark \ref{rk:inv}), the  operator   $Q_{\mathcal H}:=  D^2_{\mathcal H}+R$ defines a weight. 
 Let us consider  for any positive $t$ the associated heat-operator $e^{-tQ_{\mathcal H}}$. The corresponding heat-kernel  $\tau$-trace $K_t^\tau\left( Q_{\mathcal H} \right) (x,x)$ at a point $x$ is defined by 
 $${\rm Tr}^\tau\left(\phi\, e^{-tQ_{\mathcal H} }\right):=\int_M \phi(x)\, K_t^\tau(Q_{\mathcal H} )(x,x)\, dx\quad\forall \phi\in \Ci(M)\ .
 $$
Applying as in Section 2 an inverse Mellin transform to the holomorphic families $A(z)= \phi\, Q_{\mathcal H}^{-z}  $ and using Proposition \ref{prop:loc.equ}  we find that  
 \begin{eqnarray}
 \label{eq:HKL2} 
  && K_t^\tau\left( D^2_{\mathcal H} \right)(   x,  x) \\
 &\sim &-\frac{(4\pi )^{\frac{n}{2}}}{2 \sqrt{{\rm det}  g}(x) }
  \,\Bigg[ {\rm res}_x^\tau\left(\log  D^2_{\mathcal H}\right) \, \delta_{\frac{n}{2}-\left[\frac{n}{2}\right]} \nonumber \\
 	&+&
 	 \sum_{k\in\left[0, \frac{n}{2} \right[\cap \Z} \Gamma\left( \frac{n}{2}-k\right)  \,\ {\rm res}_x^\tau \left(   (D^2_{\mathcal H})^{k-\frac{n}{2}}\right) \, t^{k-\frac{n}{2}}\Bigg]\nonumber \\ 
  &\sim & -\frac{(4\pi )^{\frac{n}{2}}}{2 \sqrt{{\rm det}  g}(x) } 
  \,\Bigg[ {\rm res}_x \left(\log   D^2  \right) \, \delta_{\frac{n}{2}-\left[\frac{n}{2}\right]} \nonumber \\
 	&+&
 	 \sum_{k\in\left[0, \frac{n}{2} \right[\cap \Z} \Gamma\left( \frac{n}{2}-k\right)  \,\ {\rm res}_x  \left(   (  D^2 ) ^{k-\frac{n}{2}}\right) \, t^{k-\frac{n}{2}}\Bigg].  \nonumber
 	 \end{eqnarray}  
 \begin{rk} It follows from the Duhamel formula \cite{BGV} that the time zero asymptotics of $K_t^\tau\left( D^2_{\mathcal H} \right)(   x,  x) $ coincide with that of $K_t^\tau\left( D^2_{\mathcal H}+R \right)(   x,  x) $. This is here confirmed by the fact that the residues involved in the asymptotics are invariant under perturbation by the smoothing operator $R$.
 \end{rk}



\section{The scalar curvature on the noncommutative two-torus}  

We want to define the scalar curvature on the noncommutative two-torus by means of a Wodzicki residue in analogy to the formula  (\ref{eq:scalarcurvature}) established for Riemannian surfaces. 
We need a noncommutative analogue of Theorem \ref{thm:zetaresM} on the noncommutative torus $\T_\theta^n$.   Let us  briefly recall the results of \cite{LN-JP} we need for that purpose. 
\subsection[ Pseudodifferential operators on the noncommutative torus ]{ Pseudodifferential operators on the noncommutative torus }
 Let $\theta$ be  a symmetric $n\times n$ real matrix. The noncommutative deformation $\T^n_\theta$ of the commutative torus $\T^n\sim \R^n/\Z^n$ is encoded in the $C^*$-algebra ${A}_\theta$. An element $a\in {A}_\theta$ decomposes as the convergent series $a=\sum_{k\in \Z^n} a_k U_k$ where the $(U_k)$  are   unitaries in $A_\theta$ that satisfy $U_0=1$ and 
\[
 U_k U_{l} = e^{-2 \pi i\,\langle k,\th l\rangle}U_lU_k.
\]
Let ${\mathcal A}_\theta$ denote the algebra consisting of series of the form $\sum_{k\in \Z^n} a_k U_k$, where the sequence $(a_k)_{k}\in\mathcal{S}(\Z^n)$, the vector space of sequences $(a_k)_{k}$ that decay faster than the inverse of any polynomial in $k$.
We shall also need the linear form $\t$ on ${A}_\theta$ which to an element $a=\sum_{k\in \Z^n}a_k U_k $ assigns the scalar term $a_0$, and the Laplace operator $\mathbf{\Delta}=\sum_j \delta_j^2$ defined in \cite[Example 3.13]{LN-JP} acting on $A_\th$ with $\delta_j \left(\sum_{k\in \Z^n}a_k U_k\right)= 
\sum_{k\in \Z^n}k_j\,a_k U_k.$

We refer to \cite{LN-JP} for the construction of the corresponding algebra $\Psi(\T^n_\theta)$ of classical toroidal  pseudodifferential operators \cite[Paragraph 3.2]{LN-JP} on $\T^n_\theta$. When $\theta=0$, the noncommutative torus $\T^n_\theta$ coincides with $\T^n$, ${\mathcal A}_\theta$ with $\Ci(\T^n)$  and $\Psi(\T^n_\theta)$ with the algebra $\Psi(\T^n)$ of classical pseudodifferential operators on the closed manifold $\T^n$ considered in the first section. 

\subsection[ Holomorphic families of operators on the noncommutative torus ]{ Holomorphic families of operators on the noncommutative torus }

In \cite{LN-JP} we defined holomorphic families in $\Psi(\T^n_\theta)$ and extended the canonical trace to such families by $$ {\rm TR}_\theta (A(z)):=\cutoffsum_{\Z^n} \t\left(\Op_\theta^{-1}(A(z))\right)$$
where $\Op_\theta$ is the one to one map which takes a toroidal symbol to a toroidal operator. 
As seen in \cite[Proposition 6.2]{LN-JP}, the Wodzicki residue  ${\rm Res}_\theta $ on $\mathcal A_\theta$, is a noncommutative analogue of the classical Wodzicki residue and (up to a multiplicative factor) it is the only continuous linear form on $\Psi(\T^n_\theta)$  vanishing on smoothing operators  \cite{FW} (see also \cite{LN-JP} for a slightly different characterisation which does not require continuity). 
In contrast to this, the canonical trace ${\TR}_\theta $ is (up to a multiplicative factor) the only linear form on non integer operators in $\Psi(\T^n_\theta)$ whose restriction to trace-class operators is continuous \cite{LN-JP}. 

 \begin{thm}\label{thm:NCKVPSopNCT}   For any holomorphic family  $ A(z)\in \Psi(\T^n_\theta)$ of classical operators parametrised by $   \C$ with holomorphic order $   -qz+a$ for some positive $q$ and some real number $a$,
   \begin{enumerate}
 \item the map    $z\mapsto {\rm TR}_\theta\left(A(z)\right)  $  is  meromorphic with simple poles $d_j:=\frac{a+n-j}{q}$, $ j\in \Z_{\geq 0}$,
 \item  the complex  residue at the point $d_j$   is given by:
 \begin{equation}\label{eq:classicalKVnc}{\rm Res}_{z=d_j} {\rm TR}_\theta\left(A(z)\right)= \frac{1}{q}{ \rm
 Res}_\theta (A(d_j) ).
 \end{equation} 
 \item   If $A(d_j)$ is a differential operator, then $A^\prime (d_j)$ has a well defined 
Wodzicki residue  ${ \rm
  Res}_\theta (A^\prime(d_j) ) $     and we have  
  \begin{equation}\label{eq:PSclassicaloptheta}
{\rm fp}_{z=d_j}{\rm TR}\left(A  (z)\right)=  
 \frac{1}{q}{ \rm
 Res}_\theta(A^\prime (d_j)).
 \end{equation} 
 \end{enumerate} 
\end{thm}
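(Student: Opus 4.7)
The plan is to mirror the closed-manifold proof of Theorem \ref{thm:KVPSop}, replacing the cut-off integral $\cutoffint_{\R^n}$ and the fibrewise trace $\tr_x$ by the cut-off sum $\cutoffsum_{\Z^n}$ and the trace $\t$ on $A_\theta$, and using the toroidal pseudodifferential calculus of \cite{LN-JP}. The starting point is the defining formula
$$\TR_\theta(A(z))=\cutoffsum_{\Z^n}\t\!\left(\Op_\theta^{-1}(A(z))(k)\right)$$
together with the polyhomogeneous expansion $\sigma(A(z))(k)\sim\sum_{j\ge 0}\sigma_{\alpha(z)-j}(z)(k)$ with $\alpha(z)=-qz+a$, which holds uniformly in $z$ on compact subsets of $\C$ by the definition of a holomorphic family in $\Psi(\T^n_\theta)$.

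First I would establish the toroidal counterpart of Proposition \ref{prop:KVPSsym}: for each fixed $N$, writing $\sigma(A(z))=\sum_{j=0}^{N-1}\sigma_{\alpha(z)-j}(z)+\sigma_{(N)}(z)$, the remainder contributes a function holomorphic on the half-plane $\Re(z)>(a+n-N)/q$, so all singular behaviour comes from the finitely many homogeneous pieces. For each such piece, the cut-off sum $z\mapsto \cutoffsum_{\Z^n}\t(\sigma_{\alpha(z)-j}(z)(k))$ admits a meromorphic continuation with a single simple pole at the point $z=d_j$ where $\alpha(z)-j=-n$; this is precisely the toroidal analogue of the Kontsevich--Vishik result and is already encoded in the machinery of \cite{LN-JP}. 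Combining these contributions and letting $N\to\infty$ produces the meromorphic extension of $\TR_\theta(A(z))$ with the announced simple poles at $d_j=(a+n-j)/q$, proving item (1).

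For item (2), I would compute the residue at $z=d_j$ directly: only the term of order $-n$ in the homogeneous expansion of $\sigma(A(d_j))$ contributes, and by the very definition of the Wodzicki residue ${\rm Res}_\theta$ on $\Psi(\T^n_\theta)$ from \cite{FW} and \cite[Prop.\ 6.2]{LN-JP}, the residue equals $(1/q){\rm Res}_\theta(A(d_j))$; the factor $1/q$ arises as $|\alpha'(z)|^{-1}$. For item (3), when $A(d_j)$ is a differential operator one has $\TR_\theta(A(d_j))=0$ because $\TR_\theta$ vanishes on differential operators, and I would adapt the auxiliary-family trick from \cite{PaSc}: set $B(z):=(A(z)-A(d_j))/(z-d_j)$, which satisfies $B(d_j)=A'(d_j)$. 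Although $B(z)$ is not strictly a holomorphic family of classical operators (its two summands have incompatible orders away from $d_j$), a standard regularisation argument shows that the finite part of $\TR_\theta(A(z))$ at $d_j$ equals $(1/q)\,{\rm Res}_\theta(A'(d_j))$, where $\Res_\theta$ has been extended to the possibly non-classical operator $A'(d_j)$ by the same formula as on classical symbols; this extension makes sense because the logarithmic components in the asymptotic expansion of $\sigma(A'(d_j))$ vanish after evaluation on the unit sphere, exactly as in the closed-manifold case.

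The main obstacle will be step (1), specifically controlling the difference between the cut-off sum over $\Z^n$ and a corresponding cut-off integral over $\R^n$: while on closed manifolds the symbol-level argument proceeds directly via spherical integration, on $\T^n_\theta$ one must ensure that an Euler--Maclaurin-type comparison between $\cutoffsum_{\Z^n}$ and $\cutoffint_{\R^n}$ introduces only a holomorphic correction, so that the singular behaviour and residues are exactly those predicted by the homogeneous symbolic analysis. Once this discrete-to-continuous comparison is in place (and \cite{LN-JP} provides the required tools), items (2) and (3) follow by the same algebraic manipulations that prove Theorem \ref{thm:KVPSop}, with $\t$ replacing the fibrewise trace throughout.
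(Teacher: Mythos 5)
The paper does not supply a proof of Theorem \ref{thm:NCKVPSopNCT}: it is imported from \cite{LN-JP}, where the toroidal calculus, the cut-off sum $\cutoffsum_{\Z^n}$, and the meromorphic continuation of $\TR_\theta$ on holomorphic families are developed in detail. Your outline is a plausible reconstruction of that argument, mirroring the closed-manifold proof of Theorem \ref{thm:KVPSop} with $\cutoffsum_{\Z^n}$ and $\t$ replacing $\cutoffint_{\R^n}$ and ${\rm tr}_x$, and you correctly single out the genuine technical hurdle: showing that the cut-off sum of each homogeneous piece $\sigma_{\alpha(z)-j}(z)$ continues meromorphically with a single simple pole exactly at $z=d_j$, a discrete Hadamard-finite-part statement that requires an Euler--Maclaurin or Poisson-summation comparison with the continuous integral (this is precisely what \cite{LN-JP} provides). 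Your derivations of items (2) and (3) --- only the degree $-n$ component survives at the pole, then the auxiliary family $B(z)=(A(z)-A(d_j))/(z-d_j)$ as in \cite{PaSc} --- are the right moves.

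Two points to flag. First, in item (3) you assert that $\TR_\theta$ vanishes on differential operators; on a closed manifold this is the observation that the cut-off integral annihilates polynomial symbols, and you should verify the discrete analogue (the cut-off \emph{sum} of a polynomial in $k$ is zero), since it is built into the definition of $\cutoffsum_{\Z^n}$ in \cite{LN-JP} but not entirely automatic. Second, note a latent sign ambiguity in the paper itself: \eqref{eq:PSclassicalop} has a minus in front of $\frac{1}{q}\Res(A'(d_j))$, while \eqref{eq:PSclassicalsymb}, \eqref{eq:PSclassicaloptau} and the statement \eqref{eq:PSclassicaloptheta} you are proving do not; your proposal is consistent with \eqref{eq:PSclassicaloptheta}, but you should check which sign convention for $\Res_\theta(A'(d_j))$ you are actually using when you carry out the auxiliary-family computation, so that the result matches the closed-manifold formula it is meant to generalise.
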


 It was shown in \cite{LN-JP} how  one can define via  a Cauchy formula the logarithm \cite[Paragraph 7]{LN-JP} $\log\left(\mathbf{\Delta}\right)$ of the Laplace operator and  a noncommutative analogue  $\zeta_\theta(A,Q)$ \cite[Paragraph 7]{LN-JP}  of the  $Q$-regularised $\zeta$-trace of $A$ with $Q:=1+\mathbf{\Delta}$.   Applying Theorem \ref{thm:NCKVPSopNCT} to the holomorphic family $A(z)=A\,Q^{-z}$ yields the following extension of Theorem \ref{thm:zetaresM}  to the noncommutative torus. 
\begin{thm}\label{thm:NCzetares}
Let $Q:= 1+\mathbf{\Delta}$ and let $A$ be a {\rm differential operator} in $\Psi(\T^n_\theta)$. Then
\begin{enumerate}
\item the  $\zeta_\theta$-regularised  trace  $\zeta_\theta(A,Q)$ of $A$  is holomorphic at zero, 
\item  the residue ${\rm Res}_\theta$ extends to $A\log Q$  and we have
\begin{equation}\label{eq:zetathetaresAQ}
\zeta_\theta(A,Q)(0)=-\frac{1}{q}{\rm Res}_\theta(A\, \log Q).
\end{equation}
\end{enumerate}
\end{thm}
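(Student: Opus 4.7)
The plan is to mimic the commutative proof of Theorem \ref{thm:zetaresM} by feeding the holomorphic family ${\mathcal A}\colon z\mapsto A(z):=A\,Q^{-z}$ into Theorem \ref{thm:NCKVPSopNCT}. The complex powers $Q^{-z}$ were built in \cite{LN-JP} via a Cauchy integral around the spectrum of $Q=1+\mathbf{\Delta}$, which lies in a half line $[1,+\infty)$, so a spectral cut along the negative real axis is available and the family ${\mathcal A}$ belongs to $\Psi(\T_\theta^n)$ with affine order $\alpha(z)=a-qz$, where $a$ is the order of $A$ and $q=2$ is the order of $Q$. Item (1) of Theorem \ref{thm:NCKVPSopNCT} then gives that
\[
z\longmapsto \zeta_\theta(A,Q)(z)={\rm TR}_\theta\left(A\,Q^{-z}\right)
\]
is meromorphic on $\C$ with at most simple poles at $d_j=\tfrac{a+n-j}{q}$, $j\in\Z_{\geq 0}$.

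First I would handle holomorphy at zero. The only way $0$ can appear among the $d_j$ is for $j=a+n$, in which case the residue at the pole, by \eqref{eq:classicalKVnc}, equals $\tfrac{1}{q}{\rm Res}_\theta(A\,Q^{0})=\tfrac{1}{q}{\rm Res}_\theta(A)$. Since $A$ is a differential operator and since in the toroidal setting the noncommutative Wodzicki residue ${\rm Res}_\theta$ vanishes on polynomial symbols (this is the analogue at $\theta\neq 0$ of the classical fact used in Theorem \ref{thm:zetaresM}, established as the toroidal counterpart in \cite{FW} and \cite[Prop.~6.2]{LN-JP}), the would-be residue vanishes. This proves (1).

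For (2), I would use item (3) of Theorem \ref{thm:NCKVPSopNCT} at the pole $d_j=0$, which states
\[
{\rm fp}_{z=0}\,{\rm TR}_\theta\left(A(z)\right)=\frac{1}{q}\,{\rm Res}_\theta\!\left(A'(0)\right),
\]
where ${\rm Res}_\theta$ on the right hand side is to be interpreted as the extension of the residue to the (generally non classical) operator $A'(0)$, exactly parallel to Theorem \ref{thm:KVPSop}(3) in the commutative case. Differentiating $A(z)=A\,Q^{-z}$ in the Cauchy representation of \cite[Par.~7]{LN-JP} gives $A'(0)=-A\,\log Q$, with $\log Q$ the logarithm constructed there. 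Since by (1) there is no pole at zero, the finite part coincides with the value, and we obtain
\[
\zeta_\theta(A,Q)(0)=-\frac{1}{q}\,{\rm Res}_\theta\!\left(A\,\log Q\right),
\]
which is the announced formula. This also defines and makes meaningful the extension of ${\rm Res}_\theta$ to the logarithmic operator $A\,\log Q$.

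The main obstacle is the verification that the constructions of \cite{LN-JP} used implicitly above transpose cleanly; namely, that $z\mapsto A\,Q^{-z}$ is genuinely a holomorphic family of toroidal classical operators in the sense required by Theorem \ref{thm:NCKVPSopNCT}, that its derivative at zero is $-A\,\log Q$ in that same framework (so as to legitimately invoke \eqref{eq:PSclassicaloptheta}), and that the noncommutative residue vanishes on polynomial toroidal symbols so as to kill the pole at $0$. All three facts are available in \cite{FW,LN-JP}; everything else is a direct transcription of the closed manifold argument.
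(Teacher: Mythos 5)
Your proposal reproduces exactly the argument the paper itself uses (albeit tacitly): the paper only says that the theorem follows from ``applying Theorem \ref{thm:NCKVPSopNCT} to the holomorphic family $A(z)=A\,Q^{-z}$,'' and your writeup fills in precisely those steps — identifying $A(z)=A\,Q^{-z}$ as a holomorphic family of affine order $a-qz$, killing the would-be pole at $z=0$ via the vanishing of ${\rm Res}_\theta$ on differential operators, and reading off the constant term from \eqref{eq:PSclassicaloptheta} together with $A'(0)=-A\log Q$. This is correct and essentially the same route as the paper; the only caveats worth recording are cosmetic: the formula \eqref{eq:PSclassicaloptheta} you invoke carries a different sign convention than its commutative analogue \eqref{eq:PSclassicalop} (the latter appears to contain a sign typo, as can be seen by comparing with \eqref{eq:PSclassicalopzeta}), and at $z=0$ you speak of ``the pole'' after having just shown it is removable, where ``the point $d_j=0$'' would be cleaner.
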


\subsection{ The scalar curvature as an extended Wodzicki residue }
We now want to define the scalar curvature on $\T^2_\theta$ by means of  a noncommutative analogue of (\ref{eq:scalarcurvature}). We work on a conformal deformation of the complexified two torus  using the notational conventions of \cite{FK2}.\\
Let $\tau=\tau_1+i\tau_2$ with $\tau_1,\tau_2\in \R$. Let $\partial=\delta_1+\bar \tau \delta_2$. Then $\partial^*=\delta_1+\tau \delta_2$.  The operators $\partial_\tau$ and $\partial_\tau^*$ are the noncommutative counterparts of   $-i\left(\partial_{x_1}+\bar \tau \partial_{x_2}\right)$ and $-i\left(\partial_{x_1}+ \tau \partial_{x_2}\right)$ acting on ${\mathcal A}_0=\Ci(\T^2)$.  Let $h\in  {\mathcal A}_\theta$ be selfadjoint and set $k=e^{\frac{h}{2}}$. Let ${\mathcal H}_h$ be the completion of ${\mathcal A}_\theta$ for the inner product $$ \langle  a,b\rangle_h= \t(b^*a k^{-2})= \langle  ak^{-1},bk^{-1}\rangle_0=\langle R_{k^{-2}} a,  b \rangle_0 $$ on ${\mathcal A}_\theta$ where $R_{b} : a\mapsto a\, b$ stand for the right multiplication by $b$.  The map $R_{k  } : a\mapsto a\, k  $ induces an isometry  $U: {\mathcal H}_0\longrightarrow {\mathcal H}_h$. 
\\   The analogue of the space of $ (1, 0)$-forms on the ordinary two-torus is defined to be the Hilbert space completion ${\mathcal H}_0^{(1,0)}$ of the space of finite sums $a\partial b, a, b \in {\mathcal A}_\theta$ for the inner product  $ \langle  a,b\rangle_0$.  We view $\partial$ as an unbounded operator $\partial_h : {\mathcal H}_h  \longrightarrow {\mathcal H}_0^{(1,0)}$. Then
$$\langle \partial  a, b\rangle_0= \langle  a   ,   \partial^*  b\rangle_0 =    \langle R_{k^{2}} a  ,    \partial^*  b\rangle_h=  \langle  a  ,   R_{k^{2}} \partial^*  b\rangle_h.$$  Thus   its formal adjoint is given by $$\partial_h^*=R_{k^{2}}\partial^* .$$  
We consider the  operator
\begin{displaymath}
D_h=\begin{bmatrix}
0 &\partial_h^* \\
\partial_h &0
\end{bmatrix} =\begin{bmatrix}
0 &R_{k^{2}}\partial^* \\
\partial  &0
\end{bmatrix} 
\end{displaymath}   
acting on the $\Z_2$- graded space  $\widetilde{{\mathcal H}_h}:= {\mathcal H}_h\oplus {\mathcal H}_0^{(1,0)} $.
Let $$ \mathbf{\Delta}_h:=\partial_h^*\partial_h+ \partial_h\partial_h^*= R_{k^{2}}\,\partial^*\partial +\partial R_{k^{2}}\partial^*,$$ which for $\tau_i$ and $h=0$ coincides with $\mathbf{\Delta}$. \\
Here is a corollary of  Theorem \ref{thm:zetaresM} extended to this slightly more general framework. 

\begin{cor}\label{thm:zetaresTtheta}  Let $ Q:=  \mathbf{\Delta}_h+\pi_{\mathbf{\Delta}_h}$, where   $\pi_{\mathbf{\Delta}_h}$ is the orthogonal projection onto the kernel of $\mathbf{\Delta}$  and let $a\in {\mathcal A}_\theta$. Then
\begin{enumerate}
\item the  $\zeta_\theta$-regularised  trace  $\zeta_\theta(a,Q)$ of $a$  is holomorphic at zero, 
\item  the residue ${\rm Res}_\theta$ extends to $a\log Q$  and we have
\begin{equation}\label{eq:zetaresaQ}
\zeta_\theta(a,Q)(0)=-\frac{1}{q}{\rm Res}_\theta(a\, \log Q).
\end{equation}
\end{enumerate}
\end{cor}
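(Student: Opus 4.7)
The plan is to mirror the proof of Theorem \ref{thm:NCzetares}, replacing the weight $1+\mathbf{\Delta}$ by $Q := \mathbf{\Delta}_h + \pi_{\mathbf{\Delta}_h}$ and applying Theorem \ref{thm:NCKVPSopNCT} to the holomorphic family $A(z) := a\, Q^{-z}$ in $\Psi(\T^2_\theta)$. The genuinely new ingredients compared to Theorem \ref{thm:NCzetares} are that the weight is built from the conformally perturbed Laplacian $\mathbf{\Delta}_h = R_{k^{2}}\,\partial^*\partial +\partial R_{k^{2}}\partial^*$ rather than from the flat Laplacian, and that $a$ is only assumed to lie in ${\mathcal A}_\theta$. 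However, as a multiplication operator, $a$ is a differential operator of order zero in $\Psi(\T^2_\theta)$, so the hypotheses of part (3) of Theorem \ref{thm:NCKVPSopNCT} will be met at $z=0$.

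First I would verify that $Q$ is a weight of order $q=2$ in $\Psi(\T^2_\theta)$. Reading off the principal symbol of $\mathbf{\Delta}_h$ from the formulae for $\partial$ and $\partial^*$, together with the fact that $R_{k^{2}}$ is a zero-order multiplication with positive invertible leading symbol $k^2=e^h$, one sees that $\mathbf{\Delta}_h$ is elliptic. Being a sum of two nonnegative operators, $\mathbf{\Delta}_h$ admits $\beta=\pi$ as a spectral cut. Elliptic regularity on the noncommutative torus (in the sense of \cite{LN-JP,FW}) then yields that $\ker \mathbf{\Delta}_h$ is finite-dimensional, so $\pi_{\mathbf{\Delta}_h}$ is a finite-rank smoothing operator, and $Q=\mathbf{\Delta}_h+\pi_{\mathbf{\Delta}_h}$ is invertible with the same principal symbol as $\mathbf{\Delta}_h$. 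The complex powers $Q^{-z}$ are then defined via the Cauchy integral \eqref{eq.cxp} inside the calculus of \cite{LN-JP}, so $z\mapsto A(z) = a\, Q^{-z}$ is a holomorphic family of affine order $-qz$ in $\Psi(\T^2_\theta)$ with $A(0)=a$ a differential operator.

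Theorem \ref{thm:NCKVPSopNCT} then yields the result. Since $A(0)=a$ is a differential operator, ${\rm Res}_\theta(A(0))=0$, so by part (2) of Theorem \ref{thm:NCKVPSopNCT} the potential pole of $\zeta_\theta(a,Q)(z)={\rm TR}_\theta(A(z))$ at $d_j=0$ is absent, establishing holomorphicity at zero. Differentiating the Cauchy representation of $Q^{-z}$ at $z=0$ gives $A'(0) = -a\log Q$, and \eqref{eq:PSclassicaloptheta} then yields
$$\zeta_\theta(a,Q)(0) = {\rm fp}_{z=0}{\rm TR}_\theta(a\, Q^{-z}) = \frac{1}{q}{\rm Res}_\theta(A'(0)) = -\frac{1}{q}{\rm Res}_\theta(a\,\log Q),$$
which is precisely \eqref{eq:zetaresaQ}. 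That ${\rm Res}_\theta$ extends to $a\log Q$ in spite of $\log Q$ not being classical follows as in Section \ref{ssecreslog} from \eqref{eq:symbollog}: the symbol of $\log Q$ splits as $q\log|\xi|\, I$ plus a classical symbol of order zero, and the logarithmic term is annihilated upon integration against $\dbar_S\xi$ on the unit cosphere.

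I expect the main obstacle to be the first step, namely the verification that $\mathbf{\Delta}_h$ is a bona fide elliptic admissible element of $\Psi(\T^2_\theta)$ in spite of the right multiplication by the noncentral element $k^2$, and that the noncommutative analogue of elliptic regularity yields the finite-dimensionality of $\ker\mathbf{\Delta}_h$ needed to identify $\pi_{\mathbf{\Delta}_h}$ as smoothing. Once this is established within the calculus of \cite{LN-JP,FW}, the remaining steps are direct applications of Theorem \ref{thm:NCKVPSopNCT} that parallel the proof of Theorem \ref{thm:NCzetares}.
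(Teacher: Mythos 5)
Your proposal is correct and takes essentially the same approach the paper implies: the paper states the corollary without a separate proof, presenting it as the natural extension of Theorem \ref{thm:NCzetares}, which is itself obtained by applying Theorem \ref{thm:NCKVPSopNCT} to the holomorphic family $A(z)=A\,Q^{-z}$. Your additional verification that $\mathbf{\Delta}_h$ is elliptic, admissible with spectral cut $\pi$, and has a smoothing kernel projection (so that $Q=\mathbf{\Delta}_h+\pi_{\mathbf{\Delta}_h}$ is a weight in $\Psi(\T^2_\theta)$) is exactly the detail the paper leaves implicit, and your reading of ``$\pi_{\mathbf{\Delta}_h}$'' as the projection onto $\ker\mathbf{\Delta}_h$ is the correct interpretation of the paper's wording.
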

 
 Exactly as in the case of manifolds, if we specialise the holomorphic family to the case $A(z)=a \, Q^{-z}$, where $a$ is an element of the algebra of the noncommutative torus, using the same arguments, we get the following result:
 
\begin{thm}\label{thm:NCan}  
For any $a\in \A_\th$ we have
\begin{eqnarray}\label{eq:NCHKresphi}   
&& {\rm Tr}\left(a\, e^{-t \mathbf{\Delta}_h }\right) \sim_{t\to 0} \\
&&-\frac{(4\pi )^{\frac{n}{2}}}{2 }\,\Bigg[{\rm Res} \left(a\,\log \mathbf{\Delta}_h\right) \, \delta_{\frac{n}{2}-\left[\frac{n}{2}\right]} \nonumber \\
&&+\sum_{k\in\left[0,\frac{n}{2}\right[\cap \Z} \Gamma\left( \frac{n}{2}-k\right)  \,{\rm Res} \left(a\,    \mathbf{\Delta}_h^{k-\frac{n}{2}}\right)\, t^{k-\frac{n}{2}}\Bigg]\nonumber.
\end{eqnarray}
\end{thm}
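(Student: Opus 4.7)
The plan is to transcribe the proof of Theorem~\ref{thm:an} to the noncommutative torus, using the noncommutative counterparts of the canonical trace and zeta-regularisation developed earlier in this section. First I would apply Theorem~\ref{thm:NCKVPSopNCT} to the holomorphic family $z \mapsto a\, Q^{-z}$, where $Q := \mathbf{\Delta}_h + \pi_{\mathbf{\Delta}_h}$ is a weight of order $q = 2$. This yields that $z \mapsto \TR_\theta(aQ^{-z})$ is meromorphic with simple poles at $d_j := (n-j)/2$, $j\geq 0$, of residue $\tfrac{1}{2}\Res_\theta(a\,\mathbf{\Delta}_h^{-d_j})$, the smoothing perturbation $\pi_{\mathbf{\Delta}_h}$ being invisible to $\Res_\theta$. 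Corollary~\ref{thm:zetaresTtheta} further identifies the finite value $\zeta_\theta(a,Q)(0) = -\tfrac{1}{2}\Res_\theta(a\,\log \mathbf{\Delta}_h)$ at $z = 0$, noting that the order-zero multiplication operator $a \in \A_\theta$ has vanishing Wodzicki residue.

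Next I would recognise $f(t) := \Tr_\theta(a\, e^{-tQ})$ as the inverse Mellin transform of $\phi(z) := \Gamma(z)\, \TR_\theta(aQ^{-z})$. This identification follows, in the half-plane of convergence, from the scalar identity $\Gamma(z)\lambda^{-z} = \int_0^\infty e^{-t\lambda}\, t^{z-1}\, dt$ applied via the spectral calculus for $Q$, combined with the coincidence of $\TR_\theta$ and $\Tr_\theta$ on trace-class operators. A Duhamel argument allows the replacement of $e^{-tQ}$ by $e^{-t\mathbf{\Delta}_h}$ up to exponentially small corrections in $t$, since $\pi_{\mathbf{\Delta}_h}$ is smoothing.

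Proposition~\ref{prop:InverseMellin} then yields an asymptotic expansion of $f(t)$ at zero whose coefficient at $t^{-d_j}$ is $\Res_{z=d_j}\phi(z)$. For $d_j = n/2 - k$ with $0 \leq k < n/2$, $\Gamma$ is regular at $d_j$ and this residue equals $\Gamma(n/2 - k)\cdot \tfrac{1}{2}\Res_\theta(a\,\mathbf{\Delta}_h^{k-n/2})$; for $d_j = 0$, which arises only for even $n$ (whence the factor $\delta_{n/2-[n/2]}$), the simple pole of $\Gamma$ pairs with the regular value $\zeta_\theta(a,Q)(0)$ to produce $-\tfrac{1}{2}\Res_\theta(a\,\log \mathbf{\Delta}_h)$ as the constant term. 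The overall prefactor $(4\pi)^{n/2}/2$ is inherited from the commutative case in Theorem~\ref{thm:an}.

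The main obstacle is verifying the decay hypothesis of Proposition~\ref{prop:InverseMellin}: one needs $|\phi(z)| = O(|z|^{-r})$ with $r>1$ in vertical strips. Stirling's formula (as in Corollary~\ref{cor:propertiesphiz}) handles the $\Gamma$-factor, and one must also establish local uniform boundedness of $\TR_\theta(aQ^{-z})$ in horizontal strips; this latter bound is implicit in the cut-off sum construction of $\TR_\theta$ developed in \cite{LN-JP} and constitutes the most technical step, although it is a direct analogue of the closed-manifold argument behind Corollary~\ref{cor:propertiesphiz}.
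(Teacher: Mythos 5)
Your proof is essentially the paper's: the authors' entire justification for Theorem~\ref{thm:NCan} is the sentence ``Exactly as in the case of manifolds, if we specialise the holomorphic family to the case $A(z)=a\,Q^{-z}$ \dots using the same arguments, we get the following result,'' and you have correctly unwound that one-liner by transposing the proof of Theorem~\ref{thm:an}. You identify all the intended ingredients: applying Theorem~\ref{thm:NCKVPSopNCT} and Corollary~\ref{thm:zetaresTtheta} to the family $z\mapsto aQ^{-z}$ with $Q=\mathbf{\Delta}_h+\pi_{\mathbf{\Delta}_h}$, recognising $\Tr_\theta(ae^{-tQ})$ as the inverse Mellin transform of $\Gamma(z)\TR_\theta(aQ^{-z})$, using Duhamel to pass from $e^{-tQ}$ to $e^{-t\mathbf{\Delta}_h}$, invoking Proposition~\ref{prop:InverseMellin}, and checking its decay hypothesis via Stirling and the boundedness of the cut-off sum.

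A few small remarks. The reason Corollary~\ref{thm:zetaresTtheta} yields no $\Tr(T)$-term is that $a\in\A_\theta$ is a differential operator of order zero, so $T=0$; the vanishing of the Wodzicki residue of $a$ is a separate (true but unused) fact. The factor $\delta_{\frac{n}{2}-[\frac{n}{2}]}$ is not because the pole $d_j=0$ ``arises only for even $n$'': with $d_j=(n-j)/2$, the pole $d_n=0$ always exists. Rather, for the Laplace-type operator $\mathbf{\Delta}_h$ the coefficients at odd $j$ vanish by parity of the symbol, which is what collapses the sum to integer $k\in[0,n/2)$ and removes the $t^0$ term when $n$ is odd. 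Finally, your residue computation gives the prefactor $\tfrac{1}{2}$ in front of $\Gamma(n/2-k)\,{\rm Res}_\theta(\cdot)$, whereas the displayed formula carries $-\tfrac{(4\pi)^{n/2}}{2}$; deferring the extra factor to ``the commutative case'' does not actually derive it, though in fairness the paper's own proof of Theorem~\ref{thm:an} is equally silent on this normalisation.
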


The scalar curvature being associated to the $a_1$ coefficient of the heat kernel expansion, Theorem \ref{thm:NCan}  motivates the following definition:

\begin{defn}
The ``scalar curvature" $\mathfrak s_h$ on the noncommutative two torus $\T^2_\theta$ associated with the ``metric" determined by the conformal factor  $h$ is defined as 
\begin{equation}\label{eq:NCscal}
\langle \mathfrak s_h, a\rangle_h 
= \begin{cases}
-6\pi\,{\rm Res}_\theta \left(a\,\log \mathbf{\Delta}_h\right)\quad  &\text{if}\quad n=2\noindent\\
 - \frac{ 3}{2}\, (4\pi )^{\frac{n}{2}} \, \Gamma\left( \frac{n}{2}-k\right)\, 
{\rm Res}_\theta \left(a\, \mathbf{\Delta}_h^{k-\frac{n}{2}} \right)\quad &{\rm otherwise},
\end{cases} 
\end{equation} 
 which compares with the definitions in  \cite{CM2}, \cite{CT}, \cite{FK1}, \cite{FK2}.
\end{defn}


\subsection*{Acknowledgment}
The last author thanks the organisers of the    XXXIII Workshop on
Geometric Methods in Physics for giving her the opportunity to present some of these results during the meeting.

\bigskip

\noindent Sara Azzali\\
Institut f\"ur Mathematik\\
Universit\"at Potsdam\\
Am Neuen Palais, 10\\
14469 Potsdam, Germany\\
e-mail: \texttt{azzali@uni-potsdam.de}

\bigskip

\noindent Cyril L\'evy\\
 D\'epartement de math\'ematiques\\
Centre universitaire Jean-Fran\c cois Champollion\\
Place Verdun\\
81000 Albi, France\\
e-mail: \texttt{cyril.olivier.levy@gmail.com}

\bigskip

\noindent Carolina Neira-Jim\'enez\\
Departamento de Matem\'aticas\\
Universidad Nacional de Colombia\\
Carrera 30 \# 45-03\\
Bogot\'a, Colombia\\
e-mail: \texttt{cneiraj@unal.edu.co}

\bigskip

\noindent Sylvie Paycha\\
Institut f\"ur Mathematik\\
Universit\"at Potsdam\\
Institut f\"ur Mathematik\\
Am Neuen Palais, 10\\
14469 Potsdam, Germany\\
e-mail: \texttt{paycha@math.uni-potsdam.de}

\end{document}